\documentclass[reqno,english]{amsart}
\usepackage{amsfonts,amsmath,latexsym,verbatim,amscd,mathrsfs,color,array}
\usepackage[colorlinks=true]{hyperref}

\usepackage{amsmath,amssymb,amsthm,amsfonts,graphicx,color}
\usepackage{amssymb}
\usepackage{pdfsync}
\usepackage{epstopdf}

\usepackage{cite}

 \usepackage{graphicx}

\newtheorem{theorem}{Theorem}[section]

\newtheorem{lemma}[theorem]{Lemma}

\newtheorem{proposition}[theorem]{Proposition}
\newtheorem{remark}[theorem]{Remark}

%\newenvironment{proof}[1][Proof]{\noindent\textbf{#1.} }{\ \rule{0.5em}{0.5em}}
%\allowdisplaybreaks

\numberwithin{equation}{section}

\begin{document}

\title[Gross-Pitaevskii and Adler-Moser Polynomials]{Multi-vortex traveling waves for the Gross-Pitaevskii equation and the
Adler-Moser polynomials}

\author[Y.Liu]{Yong Liu}

\address{\noindent School of Mathematics and Physics, North China Electric Power University, Beijing, China}
\email{liuyong@ncepu.edu.cn}

\author[J. Wei]{Juncheng Wei}
\address{\noindent
Department of Mathematics,
University of British Columbia, Vancouver, B.C., Canada, V6T 1Z2}
\email{jcwei@math.ubc.ca}

\maketitle

\begin{abstract}
For $N\leq34,$ we construct traveling waves with small speed for the
Gross-Pitaevskii equation, by gluing $N(N+1)/2$ pairs of degree $\pm1$
vortices of the Ginzburg-Landau equation. The location of these vortices is
symmetric in the plane and determined by the Adler-Moser polynomials, which
has its origin in the study of Calogero-Moser system and rational solutions of
the KdV equation. The construction still works for $N>34$, under the additional
assumption that the corresponding Adler-Moser polynomial has no repeated root.
It is expected that this assumption holds for any $N\in\mathbb{N}$.

\end{abstract}

\setcounter{equation}{0}
\section{Introduction}

The Gross-Pitaevskii equation (GP equation) arises as a model equation in
Bose-Einstein condensate. It reads as%
\begin{equation}
i\partial_{t}\Phi=\Delta\Phi+\Phi\left(  1-\left\vert \Phi\right\vert
^{2}\right)  ,\text{ in }\mathbb{R}^{2}, \label{GP}%
\end{equation}
where $\Phi$ is complex valued. Throughout the paper, $i$ will represent the
imaginary unit. For traveling wave solutions of the form $U\left(
x,y-\varepsilon t\right),$ GP equation becomes
\begin{equation}
-i\varepsilon\partial_{y}U=\Delta U+U\left(  1-\left\vert U\right\vert
^{2}\right) ,\text{ in }\mathbb{R}^{2}. \label{TW}%
\end{equation}

\medskip

We would like to construct multi-vortex type solutions of $\left(
\ref{TW}\right)  $ when the speed $\varepsilon$ is close to zero. If
$\varepsilon=0,$ then the above equation reduces to the well-known
Ginzburg-Landau equation%
\begin{equation}
\Delta U+U\left(  1-\left\vert U\right\vert ^{2}\right)  =0 ,\text{ in }\mathbb{R}^{2}.
\end{equation}

\medskip

We shall use $\left(r,\theta\right)  $ to denote the polar coordinate of
$\mathbb{R}^{2}$. For each $d\in\mathbb{Z}\backslash\left\{  0\right\}  ,$ it
is known that the Ginzburg-Landau equation has a degree $d$ vortex solution,
of the form $e^{id\theta}S_{d}\left(  r\right)  $. The function $S_{d}$ is
real valued and vanishes exactly at $r=0.$ It satisfies%
\[
-S_{d}^{\prime\prime}-\frac{1}{r}S_{d}^{\prime}+\frac{d^{2}}{r^{2}}S_{d}%
=S_{d}\left(  1-S_{d}^{2}\right)  ,\text{ in }\left(  0,+\infty\right)  .
\]
This equation has a unique solution $S_{d}$ satisfying $S_{d}\left(  0\right)
=0$ and $S_{d}\left(  +\infty\right)  =1$ and $S^{\prime}\left(  r\right)
>0.$ See \cite{Fife,Her} for a proof. The \textquotedblleft
standard\textquotedblright\ degree $\pm1$ solutions $S_{1}\left(  r\right)
e^{i\theta}$ are global minimizers of the energy functional(For uniqueness of
the global minimizer, see \cite{M2,SH}). When $\left\vert d\right\vert >1,$
these standard vortices are unstable(\cite{Mironescu,Lin}). It is also worth
mentioning that for $\left\vert d\right\vert >1,$ the uniqueness of degree $d$
vortex in the class of solutions with degree $d$ is still an open problem. We
refer to \cite{Brezis,Pacard,Serfaty} and the references therein for more
discussion on the Ginzburg-Landau equation.

\medskip

The constant $1$ is a solution to the equation $\left(  \ref{TW}\right)  $. We
are interested in the solution $U$ which satisfies%
\[
U (z) \rightarrow1\text{, as }\left\vert z\right\vert \rightarrow+\infty.
\]
The existence or nonexistence of solutions to (\ref{TW}) with this asymptotic behavior has
been extensively studied in the literature. Jones, Putterman,
Roberts(\cite{J,J1}) studied it from the physical point of view, both in
dimension two and three. It turns out the existence of solutions is related to
the traveling speed $\varepsilon.$ When $\varepsilon\geq\sqrt{2}$ (the sound
speed in this context), nonexistence of traveling wave with \textit{finite
energy} is proved by Gravejat in \cite{G2,G3}. On the other hand, for
$\varepsilon\in\left(  0,\sqrt{2}\right)  ,$ the existence of travelling waves
as constrained minimizer is studied by Bethuel, Gravejat, Saut \cite{Ber1,Ber3}%
, by variational arguments. For $\varepsilon$ close to $0,$ these solutions
have two vortices. The existence issue in higher dimension is studied
\cite{Ber2,Ch1,Ch2}. We also refer to \cite{B1} for a review on this subject.
Recently, Chiron-Scheid \cite{Chiron} performed numerical simulation on this
equation. Among other things, their results indicate the existence of higher
energy traveling waves. \ We also mention that as $\varepsilon$ tends to
$\sqrt{2},$ a suitable rescaled traveling waves will converge to solutions of
the KP-I equation (\cite{KP}), which is classical integrable system. In a
forthcoming paper, we will construct transonic traveling waves based on the
lump solution of the KP-I equation.

\medskip

Another motivation for studying (\ref{TW}) arises in the study of superfuilds passing an obstacle. Equation (\ref{TW}) is the limiting equation in the search of vortex nucleation solution. We refer to recent paper \cite{LW} for references and derivations.

\medskip

To simplify notations, we write the degree $\pm1$ vortex solutions of the
Ginzburg-Landau equation as
\[
v_{+}=e^{i\theta}S_{1}\left(  r\right)  ,v_{-}=e^{-i\theta}S_{1}\left(
r\right)  .
\]
In this paper, we construct new traveling waves for $\varepsilon$ close to
$0,$ using $v_{+},v_{-}$ as basic blocks. Our main result is

\begin{theorem}
\label{main}For each $N\leq34,$ there exists $\varepsilon_{0}>0,$ such that
for all $\varepsilon\in\left(  0,\varepsilon_{0}\right)  ,$ the equation
$\left(  \ref{TW}\right)  $ has a solution $U_{\varepsilon}$ which has the
form
\[
U_{\varepsilon}=\prod\limits_{k=1}^{N\left(  N+1\right)  /2}\left(
v_{+}\left(  z-\varepsilon^{-1}p_{k}\right)  v_{-}\left(  z-\varepsilon
^{-1}q_{k}\right)  \right)  +o\left(  1\right)  ,
\]
where $p_{k}$, $k=1,...,N\left(  N+1\right)  /2$ are the roots of the
Adler-Moser polynomial $A_{n}$ defined in the next section ($n=\frac{N(N+1)}{2}$), and $q_{k}%
=-\bar{p}_{k}.$
\end{theorem}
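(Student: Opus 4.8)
The plan is to realize $U_{\varepsilon}$ as a small perturbation of the product ansatz
$U_{*}=\prod_{k}v_{+}(z-\varepsilon^{-1}p_{k})\,v_{-}(z-\varepsilon^{-1}q_{k})$
via a Lyapunov--Schmidt reduction with the vortex locations $p=(p_{k})$ as reduction parameters, and then to solve the resulting finite-dimensional system by matching it to the algebraic relations satisfied by the roots of the Adler--Moser polynomials. First I would fix the scaling: writing the physical centers as $\varepsilon^{-1}p_{k}$, the mutual distances are of order $\varepsilon^{-1}$, so both the pairwise vortex interactions and the traveling term $i\varepsilon\partial_{y}U$ are of order $\varepsilon$ and can balance against each other. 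I would then compute the error $S(U_{*}):=\Delta U_{*}+U_{*}(1-|U_{*}|^{2})+i\varepsilon\partial_{y}U_{*}$ and estimate it in weighted norms adapted both to the exponential convergence $|v_{\pm}|\to1$ near the cores and to the slowly decaying (dipole-type) far field, showing that $S(U_{*})$ is of size $O(\varepsilon)$ and that its component along the approximate kernel is what drives the reduced equations.

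The analytic core is the linear theory. Around a single degree $\pm1$ vortex the linearized Ginzburg--Landau operator is non-degenerate modulo its kernel, spanned by the translation modes $\partial_{x}v_{\pm},\ \partial_{y}v_{\pm}$ (and, modulo the phase symmetry $U\mapsto e^{i\alpha}U$, by $iv_{\pm}$); I would take this fact, together with the associated invertibility estimate in weighted spaces, as input. I would assemble approximate kernel elements for $U_{*}$ by localizing these single-vortex modes near each of the $N(N+1)$ cores, and then prove a uniform-in-$\varepsilon$ a priori estimate for the full linearized operator $L_{*}$ modulo this $O(N^{2})$-dimensional approximate kernel. The \emph{main obstacle lies precisely here}: because the vortices couple only through slowly decaying tails, and because $L_{*}$ is not self-adjoint owing to the term $i\varepsilon\partial_{y}$, one must control the interaction between the cores and the far field simultaneously and uniformly as $\varepsilon\to0$; this is where the bulk of the technical work resides.

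Granting the linear estimate, I would solve the projected equation $\Pi^{\perp}S(U_{*}+\phi)=0$ for the correction $\phi=\phi(p)$ by a contraction mapping argument, obtaining $\phi$ small and depending smoothly on $p$. The remaining solvability equations $\Pi\,S(U_{*}+\phi(p))=0$ are, to leading order in $\varepsilon$, the relative-equilibrium conditions for a system of point vortices of strength $+1$ at the $p_{k}$ and $-1$ at the $q_{k}=-\bar{p}_{k}$ translating at speed $\varepsilon$, namely a system of the schematic form $\sum_{j\neq k}(p_{k}-p_{j})^{-1}-\sum_{l}(p_{k}-q_{l})^{-1}=c$ for each $k$, with $c$ encoding the traveling velocity. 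I would then identify this algebraic system with the bilinear (Wronskian-type) recursion that defines the Adler--Moser polynomials, so that the roots of $A_{n}$ with $n=N(N+1)/2$ solve it exactly at leading order.

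To close the argument one needs the leading reduced map to be non-degenerate at the Adler--Moser configuration, so that the implicit function theorem upgrades the approximate root to an exact root of the full reduced system, which includes the $O(\varepsilon)$ corrections carried by $\phi$. This non-degeneracy is the invertibility of the Jacobian of the vortex-balance map, which breaks down exactly when two vortices collide, i.e. when $A_{n}$ has a repeated root; thus the non-degeneracy reduces to the simplicity of the roots of $A_{n}$. For $N\leq34$ I would invoke the explicit structure of $A_{n}$ established in the next section to verify that all roots are simple, which yields the required non-degeneracy and completes the construction; the same argument applies for $N>34$ under the stated no-repeated-root hypothesis.
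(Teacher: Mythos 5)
Your overall scheme --- product ansatz with centers $\varepsilon^{-1}p_k$, weighted linear theory modulo an approximate kernel built from the translation modes of the degree $\pm1$ vortex, contraction for the projected problem, and a reduced finite-dimensional system identified with the point-vortex balance equations solved by the Adler--Moser roots --- matches the paper's strategy up to the final step. But that final step, which is where the paper's real work lies, contains a genuine gap: you assert that non-degeneracy of the Jacobian of the vortex-balance map ``breaks down exactly when two vortices collide,'' so that non-degeneracy reduces to simplicity of the roots of $A_n$. This is false, and it cannot be repaired by a soft argument. Simplicity of the roots only guarantees that the balance map $F$ is defined and smooth near the configuration; it says nothing about invertibility of $DF$ there. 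Worse, $DF$ at the Adler--Moser configuration is \emph{never} invertible: these configurations come in an $n$-complex-parameter family (translation in $z$ together with the parameters $k_2,\dots,k_n$ of $\Theta_n$), and differentiating the generalized Tkachenko identity $P''Q-2P'Q'+PQ''=2\mu\left(P'Q-PQ'\right)$ along this family produces an $n$-dimensional kernel of $DF|_{(a,b)}$. So the implicit function theorem, applied as you propose to the unrestricted reduced map, fails outright.

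The paper's resolution is to impose the reflection symmetries ($q_k=-\bar p_k$, invariance under conjugation) on the whole construction --- the ansatz, the function spaces, the projected linear theory, and the reduced system --- and then to prove that the \emph{symmetric} part of the kernel of $DF|_{(a,b)}$ is trivial (Proposition \ref{sy}). That statement is the main technical novelty of the paper and occupies most of Section \ref{AM}: one linearizes the recursion $A_n''A_{n-1}-2A_n'A_{n-1}'+A_nA_{n-1}''=0$ and the Darboux transformation connecting consecutive Adler--Moser levels, propagates a putative kernel element from level $N$ down to level $0$, and shows it must vanish (Proposition \ref{li}). Simplicity of the roots --- more precisely assumption (A), that $A_n$ and $A_{n-1}$ share no roots, which is what the numerical verification for $N\le 34$ actually establishes and is the only place the bound $34$ enters --- is an \emph{ingredient} in this descent argument (it keeps the intermediate rational functions' poles under control), not a substitute for it. Your proposal is missing this entire mechanism, and without it the reduced equations cannot be solved.
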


\begin{remark}
$N=1$ corresponds to the two vortices solutions constructed by variational method (\cite{Ber3}) as well as reduction method (\cite{LFH}). For large $N$ this corresponds to higher energy solutions which have been observed numerically \cite{Chiron}.
\end{remark}

\begin{remark}
The condition $N\leq34$ is only technical. In this case, we can verify
numerically that the Adler-Moser polynomial $A_{n}$ has no repeated roots. (This
is equivariant to $A_{n}$ and $A_{n-1}$ have no common roots.) Possibly, there
are other numerically ways to verify this for large $N$(using the recursive
identity to compute the Adler-Moser polynomial, instead of computing the
Wronskian), but we will not pursue this here. We conjecture that the Adler-Moser polynomial (as constructed in this paper) has only simple roots for all $N$.
\end{remark}

\medskip

Our method is based on finite dimensional Lyapunov-Schmidt reduction. We show
that the existence of multi-vortex solutions is essentially reduced to the
study of the non-degeneracy of a symmetric vortex-configuration. To show this
non-degeneracy, we use the theory of Adler-Moser polymonials and the Darboux
transformation. An interesting feature of the solutions in Theorem \ref{main}
is that the vortex location has a ring-like structure. The emergence of this
remarkable property still remains mysterious.

\medskip

In Section \ref{AM}, we introduce the Adler-Moser polynomials and prove the
non-degeneracy of the symmetric configuration. In Section \ref{Pri}, we recall the linear
theory of the degree one vortex of the Ginzburg-Landau equation. In Section
\ref{non}, we use Lyapunov-Schmidt reduction to glue the vortices together and
get a traveling wave solution for $\varepsilon$ small enough.

\medskip

\textbf{Acknowledgement } The research of J. Wei is partially supported by
NSERC of Canada. Part of this work is finished while the first author is
visiting the University of British Columbia in 2017. He thanks the institute
for the financial support. Both authors thank Professor Fanghua Lin for stimulating discussions and suggestions.

\setcounter{equation}{0}
\section{\label{AM}Vortex location and the Adler-Moser polynomials}

Adler-Moser\cite{Adler} have studied a set of polynomials corresponding to
rational solutions of the KdV equation. It turns out that these polynomials
have deep connections to the vortex dynamics with logrithmic interaction
energy. This connection is first observed in \cite{Bartman}, and later studied
in \cite{Aref,Aref2,Aref3,Cla,Ka}. (See the reference therein.)
Surprisingly, to authors' knowledge, except the papers mentioned above, it
seems that this relation has not been much explored  for the corresponding PDEs.  One of our aims in this paper is to fill this gap.

\medskip

In this section,
we will first recall some basic facts about these polynomials and then analyze
some of their properties, which will be used in our construction of the
traveling wave for the GP equation.

\medskip

Let $p_{1},...,p_{n}$ be the
position of the positive vortices and $q_{1},...,q_{m}$ be that of the
negative ones. Let $\mu\in\mathbb{R}$ be a fixed parameter. As we will see
later, the vortex location of the traveling waves will be determined by the
following systems of equations
\begin{equation}
\left\{
\begin{array}
[c]{c}%
\sum\limits_{j\neq\alpha}\frac{1}{p_{\alpha}-p_{j}}-\sum\limits_{j}\frac
{1}{p_{\alpha}-q_{j}}=\mu,\text{ for }\alpha=1,...,n,\\
\sum\limits_{j\neq\alpha}\frac{1}{q_{\alpha}-q_{j}}-\sum\limits_{j}\frac
{1}{q_{\alpha}-p_{j}}=-\mu,\text{ for }\alpha=1,...,m.
\end{array}
\right.  \label{trans}%
\end{equation}
Adding all these equation together, we find that if $\mu\neq0,$ then $m=n$. (In
the case of $\mu=0,$ this is no longer true). That is, the number of positive
vortices must equal to that of the negative vortices. Solutions of this
system (see for instances \cite{Aref3}) are related to the Adler-Moser
polynomials. To explain this, let us define the generating polynomials%
\[
P\left(  z\right)  =\prod\limits_{j}\left(  z-p_{j}\right)  ,\text{ }Q\left(
z\right)  =\prod\limits_{j}\left(  z-q_{j}\right)  .
\]
If $p_{j},q_{j}$ satisfy $\left(  \ref{trans}\right)  ,$ then we have (see
equation (68) of \cite{Aref3})
\begin{equation}
P^{\prime\prime}Q-2P^{\prime}Q^{\prime}+PQ^{\prime\prime}=2\mu\left(
P^{\prime}Q-PQ^{\prime}\right)  . \label{pq}%
\end{equation}
This equation is usually called generalized Tkachenko equation. Setting
$\psi\left(  z\right)  =\frac{P}{Q}e^{\mu z},$ we derive from $\left(
\ref{pq}\right)  $ that
\[
\psi^{\prime\prime}+2\left(  \ln Q\right)  ^{\prime\prime}\psi=\mu^{2}\psi.
\]

\medskip

This is a one dimensional Schrodinger equation with the potential $2\left(
\ln Q\right)  ^{\prime\prime}.$ It is well known that this equation appears in
the Lax pair of the KdV equation. Hence equation $\left(  \ref{pq}\right)  $
is naturally related to the theory of integrable systems.

\medskip

For any $z\in\mathbb{C},$ we use $\bar{z}$ to denote its complex conjugate. To
simplify the notation, we also write $-\bar{z}$ as $z^{\ast}.$ Note that this
is just the reflection of $z$ across the $y$ axis. Let $K=\left(
k_{2},...\right)  ,$ where $k_{i}$ are complex parameters. Following
\cite{Cla}, we define functions $\theta_{n},$ depending on $K,$ by%
\begin{equation}
\sum_{n=0}^{+\infty}\theta_{n}\left(  z;K\right)  \lambda^{n}=\exp\left(
z\lambda-\sum_{j=2}^{\infty}\frac{k_{j}\lambda^{2j-1}}{2j-1}\right)  .
\end{equation}
Note that $\theta_{n}$ is a degree $n$ polynomial in $z$ and $\theta
_{n+1}^{\prime}=\theta_{n}.$ Explicitly, $\theta_{1}\left(  z;K\right)  =z,$%
\[
\theta_{3}\left(  z;K\right)  =\frac{z^{3}}{6}-\frac{k_{2}}{3},
\]%
\[
\theta_{5}\left(  z;K\right)  =-\frac{k_{3}}{5}-\frac{k_{2}}{6}z^{2}+\frac
{1}{120}z^{5}.
\]
Let $c_{n}=\prod\limits_{j=1}^{n}\left(  2j+1\right)  ^{n-j}.$ For each
$n\in\mathbb{N},$ the Adler-Moser polynomials are then defined by%
\begin{equation}
\Theta_{n}\left(  z,K\right)  :=c_{n}W\left(  \theta_{1},\theta_{3}%
,...,\theta_{2n-1}\right)  ,
\end{equation}
where $W\left(  \theta_{1},\theta_{3},...,\theta_{2n-1}\right)  $ is the
Wronskian of $\theta_{1},...,\theta_{2n-1}.$ In particular, the degree of
$\Theta_{n}$ is $n\left(  n+1\right)  /2.$ The constant $c_{n}$ is chosen
such that the leading coefficient of $\Theta_{n}$ is $1.$ The first three
Adler-Moser polynomials are $\Theta_{1}\left(  z,K\right)  =z,$ $\Theta
_{2}\left(  z,K\right)  =z^{3}+k_{2},$ and
\[
\Theta_{3}\left(  z,K\right)  =z^{6}+5k_{2}z^{3}-9k_{3}z-5k_{2}^{2}.
\]
Note that this definition is slightly different from that of
Adler-Moser\cite{Adler}. (The parameter $\tau_{i}$ in that paper is different
from $k_{i}$ here.)

\medskip

Let $\mu$ be another parameter. The modified Adler-Moser polynomial
$\tilde{\Theta}$ is defined by
\begin{equation}
\tilde{\Theta}_{n}\left(  z,\mu,K\right)  :=c_{n}e^{-\mu z}W\left(  \theta
_{1},\theta_{3},...,\theta_{2n-1},e^{\mu z}\right)
\end{equation}
where $\tilde{K}=\left(  k_{2}+\mu^{-3},k_{3}+\mu^{-5},...,k_{n}+\mu
^{-2n+1}\right)  .$ It is still a polynomial in $z$ with degree $n\left(
n+1\right)  /2.$ We observe that for a given $\mu,$ $\Theta_{n}$ depends on
$n-1$ complex parameters $k_{2},...,k_{n}.$ This together with the translation
in $z$ give us a total of $n$ complex parameters.

The following result, stated without proof in \cite{Cla}, will play an
important role in our later analysis.

\begin{lemma}
The Adler-Moser and modified Adler-Moser polynimials are related by
\[
\tilde{\Theta}_{n}\left(  z,\mu,K\right)  =\mu^{n}\Theta_{n}\left(  z-\mu
^{-1},\tilde{K}\right)  .
\]

\end{lemma}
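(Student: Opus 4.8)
The plan is to treat every series in $\lambda$ as a formal power series and to eliminate the extra column $e^{\mu z}$ from the Wronskian defining $\tilde\Theta_n$ by a Darboux-type manipulation, thereby reducing the identity to a constant change of basis which I can read off from the generating function. First I would record the elementary Wronskian identity: for functions $f_1,\dots,f_n$ and a nowhere-vanishing $\phi$, writing $f_i=\phi\cdot(f_i/\phi)$ and expanding the $\phi$-column,
\[
W(f_1,\dots,f_n,\phi)=(-1)^n\phi^{\,n+1}\,W\big((f_1/\phi)',\dots,(f_n/\phi)'\big).
\]
I apply this with $\phi=e^{\mu z}$ and $f_i=\theta_{2i-1}$. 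Since $\theta_m'=\theta_{m-1}$ gives $(\theta_{2i-1}e^{-\mu z})'=(\theta_{2i-2}-\mu\theta_{2i-1})e^{-\mu z}$, and the factor $e^{-\mu z}$ pulls out of all $n$ entries, the exponentials collapse and I obtain the polynomial reduction
\[
\tilde\Theta_n(z,\mu,K)=(-1)^n c_n\,W(h_1,\dots,h_n),\qquad h_i:=\theta_{2i-2}-\mu\theta_{2i-1}.
\]
What remains is to compare $W(h_1,\dots,h_n)$ with the Wronskian of the shifted functions $\hat\theta_{2i-1}(z):=\theta_{2i-1}(z-\mu^{-1};\tilde K)$.

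Next I would identify the $h_i$ against the $\hat\theta_{2i-1}$ through generating functions. With $\Psi(z,\lambda)=\sum_m\theta_m(z;K)\lambda^m$ one has $\sum_i h_i\lambda^{2i-1}=\lambda\Psi_{\mathrm{even}}-\mu\Psi_{\mathrm{odd}}$, where $\Psi_{\mathrm{even}},\Psi_{\mathrm{odd}}$ are the even and odd parts of $\Psi$ in $\lambda$. On the other side, the translation $z\mapsto z-\mu^{-1}$ together with the shift $\tilde k_j=k_j+\mu^{-(2j-1)}$ multiplies the generating function by $\exp\!\big(-\sum_{j\ge1}(\lambda/\mu)^{2j-1}/(2j-1)\big)=\exp(-\operatorname{arctanh}(\lambda/\mu))=\sqrt{(\mu-\lambda)/(\mu+\lambda)}$, so that $\hat\Psi:=\sum_m\hat\theta_m\lambda^m=\Psi\,\sqrt{(\mu-\lambda)/(\mu+\lambda)}$. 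Taking odd parts and multiplying by $\sqrt{(\mu-\lambda)(\mu+\lambda)}$ then yields the clean relation $\sum_i h_i\lambda^{2i-1}=-\sqrt{\mu^2-\lambda^2}\,\sum_i\hat\theta_{2i-1}\lambda^{2i-1}$.

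Because $\sqrt{\mu^2-\lambda^2}=\mu+O(\lambda^2)$ is an even series with constant term $\mu$, this relation says exactly that each $h_i$ is a \emph{constant}-coefficient combination $h_i=-\sum_{j\le i}s_{i-j}\hat\theta_{2j-1}$; that is, the passage from $(\hat\theta_1,\dots,\hat\theta_{2n-1})$ to $(h_1,\dots,h_n)$ is lower triangular with every diagonal entry equal to $-\mu$. Since a constant linear change of basis multiplies a Wronskian by the determinant of its matrix, $W(h_1,\dots,h_n)=(-\mu)^n\,W(\hat\theta_1,\dots,\hat\theta_{2n-1})$, and combining this with the reduction above and $\Theta_n(z-\mu^{-1},\tilde K)=c_n W(\hat\theta_1,\dots,\hat\theta_{2n-1})$ gives $\tilde\Theta_n=(-1)^n c_n(-\mu)^n W(\hat\theta_\bullet)=\mu^n\Theta_n(z-\mu^{-1},\tilde K)$; the case $n=1$, where $\tilde\Theta_1=\mu z-1=\mu\,(z-\mu^{-1})$, is a reassuring check. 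I expect the main obstacle to be bookkeeping rather than conceptual: keeping the signs and the exponent of $e^{\mu z}$ straight in the Darboux step, and above all extracting the factor $\sqrt{(\mu-\lambda)/(\mu+\lambda)}$ from the combined translation and parameter shift and recognizing that the resulting relation between the $h_i$ and the $\hat\theta_{2i-1}$ is triangular with constant entries, so that it acts on the Wronskian merely by the scalar $(-\mu)^n$.
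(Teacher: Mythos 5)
Your proof is correct and follows essentially the same route as the paper: both arguments rest on the generating-function identity relating $\theta_n(z;K)$ to $\theta_n\left(z-\mu^{-1};\tilde{K}\right)$ via the multiplier $\sqrt{\frac{1+\mu^{-1}\lambda}{1-\mu^{-1}\lambda}}$, and on the observation that the residual factor ($\sqrt{1-\mu^{-2}\lambda^{2}}-1$ in the paper, $-\sqrt{\mu^{2}-\lambda^{2}}$ in your version, which differ only by the scaling $h_i=-\mu\left(\theta_{2i-1}-\mu^{-1}\theta_{2i-1}'\right)$) is even in $\lambda$, so that odd-index functions transform by a constant lower-triangular change of basis. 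The only difference is that you carry out explicitly the step the paper compresses into ``the desired identity then follows,'' namely eliminating the $e^{\mu z}$ column from the Wronskian and tracking the determinant factor $(-1)^n(-\mu)^n=\mu^n$ of the triangular basis change.
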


\begin{proof}
We sketch the proof for completeness. First of all, direction computation
shows that%
\[
\sum_{n=0}^{+\infty}\theta_{n}\left(  z;K\right)  \lambda^{n}=\sqrt
{\frac{1+\mu^{-1}\lambda}{1-\mu^{-1}\lambda}}\sum_{n=0}^{+\infty}\theta
_{n}\left(  z-\mu^{-1};\tilde{K}\right)  \lambda^{n}.
\]
From this we obtain%
\[
\mu^{-1}\sum_{n=0}^{+\infty}\theta_{n-1}\left(  z;K\right)  \lambda^{n}%
=\mu^{-1}\lambda\sqrt{\frac{1+\mu^{-1}\lambda}{1-\mu^{-1}\lambda}}\sum
_{n=0}^{+\infty}\theta_{n}\left(  z-\mu^{-1};\tilde{K}\right)  \lambda^{n}.
\]
Hence using the fact that $\theta_{n}^{\prime}=\theta_{n-1},$ we get
\begin{align*}
&  \sum_{n=0}^{+\infty}\left(  \theta_{n}\left(  z;K\right)  -\mu^{-1}%
\theta_{n}^{\prime}\left(  z;K\right)  -\theta_{n}\left(  z-\mu^{-1};\tilde
{K}\right)  \right)  \lambda^{n}\\
&  =\left(  \sqrt{\frac{1+\mu^{-1}\lambda}{1-\mu^{-1}\lambda}}-1-\mu
^{-1}\lambda\sqrt{\frac{1+\mu^{-1}\lambda}{1-\mu^{-1}\lambda}}\right)
\sum_{n=0}^{+\infty}\theta_{n}\left(  z-\mu^{-1};\tilde{K}\right)  \lambda
^{n}.
\end{align*}
We observe that
\[
\sqrt{\frac{1+\mu^{-1}\lambda}{1-\mu^{-1}\lambda}}-1-\mu^{-1}\lambda
\sqrt{\frac{1+\mu^{-1}\lambda}{1-\mu^{-1}\lambda}}=\sqrt{1-\mu^{-2}\lambda
^{2}}-1.
\]
The Taylor expansion of this function contains only even powers of $\lambda.$
Hence for odd $n,$ $\theta_{n}\left(  z;K\right)  -\mu^{-1}\theta_{n}^{\prime
}\left(  z;K\right)  -\theta_{n}\left(  z-\mu^{-1};\tilde{K}\right)  $ can be
written as a linear combination of $\theta_{k}\left(  z-\mu^{-1};\tilde
{K}\right)  $ with $k$ being odd. The desired identity then follows.
\end{proof}

The next result, which essentially follows from Crum type theorem, reveals the
relation of the Adler-Moser polynomial with the vortex dynamics (\cite{Aref3},
see also Theorem 3.3 in \cite{Cla}).

\begin{lemma}
The functions $Q=\Theta_{n}\left(  z,K\right)  ,P=\tilde{\Theta}_{n}\left(
z,\mu,K\right)  $ satisfy $\left(  \ref{pq}\right)  .$
\end{lemma}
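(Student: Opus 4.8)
The plan is to reduce the generalized Tkachenko equation \eqref{pq} to the Schr\"odinger/Darboux framework already set up in the text, and then to recognize $P$ and $Q$ as the two sides of a single Crum (Wronskian) transformation. Write $W_{n}:=W\left(\theta_{1},\theta_{3},\ldots,\theta_{2n-1}\right)$, so that $Q=\Theta_{n}=c_{n}W_{n}$ and, by the definition of the modified polynomial, $Pe^{\mu z}=\tilde{\Theta}_{n}e^{\mu z}=c_{n}W\left(\theta_{1},\theta_{3},\ldots,\theta_{2n-1},e^{\mu z}\right)$. Hence the function $\psi:=\frac{P}{Q}e^{\mu z}$ appearing in the reduction preceding the lemma is exactly the Wronskian quotient
\[
\psi=\frac{W\left(\theta_{1},\theta_{3},\ldots,\theta_{2n-1},e^{\mu z}\right)}{W\left(\theta_{1},\theta_{3},\ldots,\theta_{2n-1}\right)},
\]
and since $c_{n}$ is a nonzero constant we have $\left(\ln Q\right)^{\prime\prime}=\left(\ln W_{n}\right)^{\prime\prime}$. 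By the computation recorded before the lemma, \eqref{pq} is equivalent to the statement that this $\psi$ solves $\psi^{\prime\prime}+2\left(\ln Q\right)^{\prime\prime}\psi=\mu^{2}\psi$; those manipulations are reversible wherever $Q\neq0$, and since both sides of \eqref{pq} are polynomials the resulting identity extends to all of $\mathbb{C}$. So it suffices to prove that the Wronskian quotient above satisfies this Schr\"odinger equation.

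The key algebraic observation is that the seed functions behave well under $\partial^{2}$. From $\theta_{m+1}^{\prime}=\theta_{m}$ we get $\theta_{2j-1}^{\prime\prime}=\theta_{2j-3}$ for $j\geq2$ and $\theta_{1}^{\prime\prime}=0$; consequently the $n$-dimensional space $V:=\mathrm{span}\left\{\theta_{1},\theta_{3},\ldots,\theta_{2n-1}\right\}$ is invariant under $\partial^{2}$, i.e. $\partial^{2}V\subseteq V$. Note that we do \emph{not} need the $\theta_{2j-1}$ to be eigenfunctions of $\partial^{2}$ (they are not, for $j\geq2$); only the invariance of $V$ will matter. On the other hand, the last seed obeys $\left(e^{\mu z}\right)^{\prime\prime}=\mu^{2}e^{\mu z}$, so $e^{\mu z}$ is a genuine eigenfunction of $\partial^{2}$ with eigenvalue $\mu^{2}$.

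Now introduce the monic differential operator $A$ of order $n$ whose kernel is $V$, defined by $A[h]=W\left(\theta_{1},\theta_{3},\ldots,\theta_{2n-1},h\right)/W_{n}$; cofactor expansion along the last column shows $A[h]=h^{(n)}-\left(\ln W_{n}\right)^{\prime}h^{(n-1)}+\cdots$. I claim the Crum intertwining
\[
A\,\partial^{2}=\left(\partial^{2}+2\left(\ln W_{n}\right)^{\prime\prime}\right)A.
\]
To see this, perform right division in the ring of differential operators: $A\partial^{2}=BA+R$ with $B$ monic of order $2$ and $\mathrm{ord}\,R\leq n-1$. For any $v\in V$ one has $Rv=A(\partial^{2}v)-B(Av)=A(\partial^{2}v)=0$, using $\partial^{2}v\in V=\ker A$; thus $R$ annihilates the $n$-dimensional space $V$ while $\mathrm{ord}\,R\leq n-1$, forcing $R=0$. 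Matching the coefficients of $\partial^{n+1}$ and $\partial^{n}$ in $A\partial^{2}=BA$ and using that the subleading coefficient of $A$ is $-\left(\ln W_{n}\right)^{\prime}$ gives $B=\partial^{2}+2\left(\ln W_{n}\right)^{\prime\prime}$. Applying the intertwining to $h=e^{\mu z}$ and using $\partial^{2}e^{\mu z}=\mu^{2}e^{\mu z}$ yields
\[
\left(\partial^{2}+2\left(\ln W_{n}\right)^{\prime\prime}\right)\psi=A\left(\partial^{2}e^{\mu z}\right)=\mu^{2}A\,e^{\mu z}=\mu^{2}\psi,
\]
which is precisely the Schr\"odinger equation, and hence \eqref{pq}.

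The only genuinely substantive point is the intertwining step, and its content is exactly the Crum/Darboux theorem specialized to an invariant seed space rather than to honest eigenfunctions: the hypothesis that makes everything work is $\partial^{2}V\subseteq V$, which here comes for free from the ladder relation $\theta_{m+1}^{\prime}=\theta_{m}$. Everything else is bookkeeping — the identification of $\psi$ with the Wronskian quotient, the fact that the normalizing constant $c_{n}$ drops out of $\left(\ln Q\right)^{\prime\prime}$, and the short determinant computation pinning down the subleading coefficient of $A$ (hence the potential $2\left(\ln W_{n}\right)^{\prime\prime}$). I expect that matching the precise sign convention for $\mu$ between \eqref{pq} and the reduction will require a little care, but this does not affect the structure of the argument.
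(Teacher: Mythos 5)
Your proof is correct in substance, and it supplies what the paper actually omits: the paper gives no proof of this lemma at all, remarking only that it ``essentially follows from Crum type theorem'' and pointing to \cite{Aref3} and Theorem 3.3 of \cite{Cla}. What you wrote is that Crum-type argument carried out in full: the identification of $\psi=\frac{P}{Q}e^{\mu z}$ with the Wronskian quotient $W\left(\theta_{1},\theta_{3},\dots,\theta_{2n-1},e^{\mu z}\right)/W_{n}$, the right-division $A\partial^{2}=BA+R$, the kernel-dimension count forcing $R=0$, and the coefficient matching that pins down $B=\partial^{2}+2\left(\ln W_{n}\right)^{\prime\prime}$. The one nonstandard twist you correctly isolate --- that Crum's theorem must here be applied to an \emph{invariant} seed space, $\partial^{2}V\subseteq V$ via $\theta_{2j-1}^{\prime\prime}=\theta_{2j-3}$, rather than to eigenfunctions --- is exactly the form in which the Adler--Moser/Burchnall--Chaundy theory \cite{Adler,Ba} operates, so your route is the one the paper's citation gestures at, executed completely.

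One point you defer should be made explicit, because it is a genuine discrepancy and it sits in the paper's statement, not in your argument. For $f=P/Q$ one computes
\[
f^{\prime\prime}+2\left(\ln Q\right)^{\prime\prime}f=\frac{P^{\prime\prime}Q-2P^{\prime}Q^{\prime}+PQ^{\prime\prime}}{Q^{2}},\qquad 2\mu f^{\prime}=2\mu\,\frac{P^{\prime}Q-PQ^{\prime}}{Q^{2}},
\]
so the Schr\"odinger equation $\psi^{\prime\prime}+2\left(\ln Q\right)^{\prime\prime}\psi=\mu^{2}\psi$ for $\psi=\frac{P}{Q}e^{\mu z}$ is equivalent to $P^{\prime\prime}Q-2P^{\prime}Q^{\prime}+PQ^{\prime\prime}=-2\mu\left(P^{\prime}Q-PQ^{\prime}\right)$, i.e.\ to \eqref{pq} with $\mu$ replaced by $-\mu$, or equivalently with $P$ and $Q$ interchanged. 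The case $n=1$ confirms this: with the lemma's assignment $Q=\Theta_{1}=z$, $P=\tilde{\Theta}_{1}=\mu z-1$, the left side of \eqref{pq} equals $-2\mu$ while the right side equals $+2\mu$. So what your intertwining argument actually proves is the corrected statement that $P=\Theta_{n}$, $Q=\tilde{\Theta}_{n}$ satisfy \eqref{pq} --- which is in fact the assignment consistent with the rest of the paper, where the roots of $A_{n}$ (a translate of $\Theta_{n}$) serve as the positive vortices $p_{k}$ in \eqref{trans} and those of $B_{n}=\tilde{\Theta}_{n,t}$ as the negative ones. With that swap stated explicitly rather than left as a caveat, your proof is complete.
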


Note that a general degree $m$ term in $\theta_{n}$ has the form $k_{2}%
^{l_{2}}\cdot\cdot\cdot k_{j}^{l_{j}}z^{m}.$ We define its index to be
$\left(  -1\right)  ^{l_{2}+...+l_{j}+m}.$ We now prove the following

\begin{lemma}
\label{index}\bigskip For each term of $\theta_{2n+1},$ its index is $-1.$
\end{lemma}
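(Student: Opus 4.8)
The plan is to exhibit a weighted-homogeneity structure for the $\theta_{s}$ and then reduce the index computation to a single parity statement. I would assign to the variables of the polynomial ring $\mathbb{C}[z,k_{2},k_{3},\dots]$ the weights $\deg z = 1$ and $\deg k_{j} = 2j-1$, and declare $\lambda$ to also carry weight $1$. The claim is that under this grading $\theta_{s}$ is weighted-homogeneous of weight $s$, that is, every monomial appearing in $\theta_{s}$ has weight exactly $s$.

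To establish this I would read it off directly from the generating identity. In the exponent $z\lambda-\sum_{j\geq2}\frac{k_{j}\lambda^{2j-1}}{2j-1}$, each monomial carries a power of $\lambda$ equal to its weight: the term $z\lambda$ has weight $1$ and $\lambda$-degree $1$, while $k_{j}\lambda^{2j-1}$ has weight $2j-1$ and $\lambda$-degree $2j-1$ (the scalar $1/(2j-1)$ is irrelevant). Expanding the exponential as a sum of products of these monomials, every such product has $\lambda$-degree equal to its total weight. Collecting the coefficient of $\lambda^{s}$ therefore gathers precisely the monomials in $z,k_{2},k_{3},\dots$ of weight $s$, which is the asserted homogeneity of $\theta_{s}$.

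With this in hand the lemma is immediate. Consider a monomial $k_{2}^{l_{2}}\cdots k_{j}^{l_{j}}z^{m}$ occurring in $\theta_{s}$. Homogeneity of weight $s$ gives the relation
\[
\sum_{i\geq2}(2i-1)\,l_{i}+m=s.
\]
Since each weight $2i-1$ is odd, reducing this identity modulo $2$ yields $\sum_{i\geq2}l_{i}+m\equiv s\pmod 2$. Hence the index of the monomial, namely $(-1)^{l_{2}+\cdots+l_{j}+m}$, equals $(-1)^{s}$. Applying this with $s=2n+1$ gives index $(-1)^{2n+1}=-1$ for every term of $\theta_{2n+1}$, as claimed.

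I do not expect a genuine obstacle here: the entire content is the choice of the grading $\deg k_{j}=2j-1$ matched to $\deg\lambda=1$, after which homogeneity is visible from the generating function and the conclusion is a one-line congruence. The only point requiring a little care is confirming that the exponential expansion does not mix weights, which the bookkeeping above settles.
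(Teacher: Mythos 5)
Your proof is correct and follows essentially the same route as the paper: both expand the exponential generating function and use that every factor ($z\lambda$ or $k_{j}\lambda^{2j-1}$) carries an odd power of $\lambda$, so the parity of $l_{2}+\cdots+l_{j}+m$ must match the parity of the $\lambda$-degree $2n+1$. The only difference is presentational: you record the exact weighted-homogeneity identity $\sum_{i}(2i-1)l_{i}+m=s$ before reducing mod $2$, whereas the paper tracks only the parity of the multinomial order $\alpha$, which equals $l_{2}+\cdots+l_{j}+m$.
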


\begin{proof}
Let $k_{2}^{l_{2}}\cdot\cdot\cdot k_{j}^{l_{j}}z^{m}$ be a degree $m$ term in
$\theta_{2n+1}.$ By Taylor expansion of the generating function and using the
fact that $2n+1$ is odd, this term comes from functions of the form,
\[
\frac{1}{\alpha!}\left(  z\lambda-\sum_{j=2}^{\infty}\frac{k_{j}\lambda
^{2j-1}}{2j-1}\right)  ^{\alpha},
\]
where $\alpha$ is an odd integer. Hence the total degree of $k_{j}$ is
$\alpha-m.$ Then the index is $\left(  -1\right)  ^{\alpha-2m}=-1.$
\end{proof}

\begin{lemma}
\label{Theta}For each term of $\Theta_{n},$ its index is equal to $\left(
-1\right)  ^{\frac{n\left(  n+1\right)  }{2}}.$
\end{lemma}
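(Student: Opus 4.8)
The plan is to reduce the statement to two ingredients: the index count for a single $\theta_\ell$, and the multiplicativity of the index under products. First I would record the following mild strengthening of Lemma \ref{index}: for every $\ell\geq 0$, \emph{each} monomial of $\theta_\ell$ has index $(-1)^\ell$ (Lemma \ref{index} is the case $\ell$ odd). This is proved exactly as Lemma \ref{index}. A monomial $k_2^{l_2}\cdots k_j^{l_j}z^m$ of $\theta_\ell$ arises from a term
\[
\frac{1}{\alpha!}\Big(z\lambda-\sum_{j=2}^{\infty}\frac{k_j\lambda^{2j-1}}{2j-1}\Big)^{\alpha},
\]
in which one selects $m$ factors $z\lambda$ and $\alpha-m$ factors from the $k$-sum, so that $l_2+\cdots+l_j=\alpha-m$. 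Each selected factor carries an odd power of $\lambda$, and the total $\lambda$-power must equal $\ell$, so $\alpha\equiv\ell\pmod 2$; hence the index is $(-1)^{(\alpha-m)+m}=(-1)^{\alpha}=(-1)^{\ell}$. (Equivalently, from $m+\sum_i(2i-1)l_i=\ell$ and the oddness of $2i-1$ one gets $m+\sum_i l_i\equiv\ell\pmod 2$.)

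Next I would expand the Wronskian. Using $\theta_m'=\theta_{m-1}$, the $(i,j)$ entry of the matrix defining $W(\theta_1,\theta_3,\dots,\theta_{2n-1})$ is $\theta_{2j-1}^{(i-1)}=\theta_{2j-i}$, with the conventions $\theta_0=1$ and $\theta_{\ell}=0$ for $\ell<0$. Writing the determinant over permutations,
\[
W=\sum_{\sigma\in S_n}\operatorname{sgn}(\sigma)\prod_{i=1}^{n}\theta_{2\sigma(i)-i}.
\]
Now I would invoke multiplicativity of the index: the index of a product of monomials is the product of the indices, because both the total $k$-degree and the total $z$-degree are additive. By the first step, every monomial of $\theta_{2\sigma(i)-i}$ has index $(-1)^{2\sigma(i)-i}=(-1)^{-i}=(-1)^{i}$, so any monomial occurring in $\prod_{i=1}^{n}\theta_{2\sigma(i)-i}$ has index
\[
\prod_{i=1}^{n}(-1)^{i}=(-1)^{1+2+\cdots+n}=(-1)^{n(n+1)/2},
\]
which is independent of $\sigma$. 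The overall sign $\operatorname{sgn}(\sigma)$ and the normalizing constant $c_n$ in $\Theta_n=c_n W$ affect only coefficients, not which monomial is produced, and therefore leave the index unchanged. Thus every monomial appearing anywhere in the expansion of $\Theta_n$ carries index $(-1)^{n(n+1)/2}$.

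I do not expect a genuine obstacle here; the one point deserving a word of care is cancellation. Distinct permutations, or distinct monomial choices inside the factors $\theta_{2\sigma(i)-i}$, may produce the same power of $z$ and the $k_j$, so their coefficients add. Since all such contributions carry the \emph{same} index $(-1)^{n(n+1)/2}$, the index is well defined on the combined term, and any surviving monomial of $\Theta_n$ (with nonzero coefficient) inherits this common value. This is precisely the assertion of the lemma.
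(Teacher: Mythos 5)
Your proof is correct and takes essentially the same route as the paper: both expand the Wronskian into permutation products, observe that each entry $\theta_{2j-1}^{(i-1)}=\theta_{2j-i}$ contributes monomials of index $(-1)^{i}$ (the paper phrases this as ``differentiation flips the index'' applied to Lemma \ref{index}, you phrase it via the strengthened statement that every monomial of $\theta_\ell$ has index $(-1)^\ell$), and then use multiplicativity to get the permutation-independent value $(-1)^{1+2+\cdots+n}=(-1)^{n(n+1)/2}$. Your explicit treatment of the conventions $\theta_0=1$, $\theta_\ell=0$ for $\ell<0$, and of possible cancellations only makes precise what the paper's terse ``typical term'' argument leaves implicit.
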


\begin{proof}
Let us consider a typical term of $\Theta_{n}$, say $\theta_{1}\theta
_{3}^{\prime}...\theta_{2n-1}^{\left(  n-1\right)  },$ where the notation
$\left(  {}\right)  $ represents taking derivatives. By Lemma \ref{index},
terms in $\theta_{k}^{\left(  j\right)  }$ have index $\left(  -1\right)
^{1+j}.$ Hence the index of terms in $\theta_{1}\theta_{3}^{\prime}%
...\theta_{2n-1}^{\left(  n-1\right)  }$ is $\left(  -1\right)  ^{1+2+...+n}%
=\left(  -1\right)  ^{\frac{n\left(  n+1\right)  }{2}}.$ This finishes the proof.
\end{proof}

Now we introduce the notation
\[
\Theta_{n,t}\left(  z,K\right)  :=\Theta_{n}\left(  z-t,K\right)  .
\]
For any polynomial $\phi$(in $z$), we use $R\left(  \phi\right)  $ to denote
the set of roots of $\phi.$ We have the following

\begin{lemma}
\label{symmetry}Suppose $\mu$ is a real number. Assume $t=-\frac{\mu}{2}$ and
$k_{j}=-\frac{1}{2}\mu^{2j-1}$ for $j=2,....$ Then
\begin{equation}
\left(  \Theta_{n,t}\left(  z,K\right)  \right)  ^{\ast}=\left(  -1\right)
^{\frac{n\left(  n+1\right)  }{2}+1}\tilde{\Theta}_{n,t}\left(  z^{\ast}%
,\mu^{-1},K\right)  .
\end{equation}
As a consequence, in this case, the reflection of $R\left(  \Theta
_{n,t}\left(  z,K\right)  \right)  $ across the $y$ axis is $R\left(
\tilde{\Theta}_{n,t}\left(  z,\mu,K\right)  \right)  ,$ and $R\left(
\Theta_{n,t}\left(  z,K\right)  \right)  $ is invariant respect to the
reflection across the $x$ axis.
\end{lemma}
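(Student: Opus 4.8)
The plan is to read the claimed identity as a reflection symmetry and to obtain it by combining two structural inputs already available: the parity grading encoded in Lemma~\ref{Theta}, and the explicit link between $\tilde{\Theta}_n$ and $\Theta_n$ from the first lemma of this section. First I would convert Lemma~\ref{Theta} into a functional equation. Since every monomial $k_2^{l_2}\cdots k_j^{l_j}z^m$ of $\Theta_n$ has index $(-1)^{n(n+1)/2}$, replacing $(z,K)$ by $(-z,-K)$ multiplies each such monomial by exactly its index, so that $\Theta_n(-z,-K)=(-1)^{n(n+1)/2}\Theta_n(z,K)$. This single symmetry is the engine behind the sign $(-1)^{n(n+1)/2+1}$ in the statement.

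Next I would feed the first lemma the argument $\mu^{-1}$ in place of $\mu$. Because the shift in that lemma is $k_j+\mu^{-(2j-1)}$, replacing $\mu$ by $\mu^{-1}$ turns it into $k_j+\mu^{2j-1}$, giving $\tilde{\Theta}_n(z,\mu^{-1},K)=\mu^{-n}\Theta_n(z-\mu,K')$ with $K'_j=k_j+\mu^{2j-1}$. The whole point of the special choice $k_j=-\tfrac12\mu^{2j-1}$ is that it makes $K'=-K$, so that the $\Theta_n$ on the right is the $(-K)$-polynomial to which the parity identity applies. Combining this with the shift $t=-\mu/2$, which sends $z-\mu$ to $z+t$, I obtain $\tilde{\Theta}_{n,t}(z,\mu^{-1},K)=\mu^{-n}\Theta_n(z+t,-K)$. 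On the other side, since $\mu$, $t$ and all the $k_j$ are real, the coefficients of $\Theta_{n,t}(z,K)$ are real, so the $\ast$-operation acts simply as a reflection of the argument; applying the parity identity $\Theta_n(-w,-K)=(-1)^{n(n+1)/2}\Theta_n(w,K)$ with $w=z+t$ then matches the two expressions up to the asserted constant. I would double-check the exact power of $\mu$ and the overall sign at this stage, since that is where a normalization can easily slip.

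Finally, the two geometric consequences fall out by inspection. Reality of the coefficients of $\Theta_{n,t}(z,K)$ forces its roots to occur in complex-conjugate pairs, which is precisely invariance of $R(\Theta_{n,t}(z,K))$ under reflection across the $x$ axis. For the $y$ axis, the identity says that $p$ is a root of $\Theta_{n,t}$ if and only if $p^{\ast}$ is a root of the starred polynomial, and the starred polynomial is a nonzero constant multiple of $\tilde{\Theta}_{n,t}$; reading off the zero sets then identifies the reflection of $R(\Theta_{n,t})$ across the $y$ axis with $R(\tilde{\Theta}_{n,t})$. I expect the main obstacle to be bookkeeping rather than conceptual: keeping straight the $\ast$-operation on polynomials versus on points, the interchange $\mu\leftrightarrow\mu^{-1}$ together with the induced parameter shift in the first lemma, and the precise factor of $\mu^{n}$ and the sign, all of which must line up to produce exactly $(-1)^{n(n+1)/2+1}$.
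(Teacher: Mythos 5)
Your proposal is correct and takes essentially the same approach as the paper's own proof: the paper likewise combines the index parity of Lemma \ref{Theta} (used monomial-by-monomial, which is exactly your functional identity $\Theta_n(-z,-K)=(-1)^{n(n+1)/2}\Theta_n(z,K)$ in disguise) with the first lemma at parameter $\mu^{-1}$, the key observations being $\tilde{k}_j=-k_j$ and $z^*+t=z^*-t-\mu$ under the choices $k_j=-\tfrac12\mu^{2j-1}$, $t=-\mu/2$. Your caution about the power of $\mu$ is warranted: both your computation and the paper's term-pairing actually produce an extra factor $\mu^{n}$ on the right-hand side (the displayed identity is literally exact only when $\mu^{n}=1$, e.g. the case $\mu=1$ used in the sequel), but since $\mu^{n}$ is a nonzero real constant this has no effect on the conclusions about the root sets.
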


\begin{proof}
By Lemma \ref{Theta}, for each term $f=k_{1}^{i_{1}}\cdot\cdot\cdot
k_{j}^{i_{j}}\left(  z-t\right)  ^{m}$ of the function $\Theta_{n,t}\left(
z,K\right)  $, we have in $\tilde{\Theta}_{n,t}\left(  z^{\ast},\mu,K\right)
$ a corresponding term $\tilde{k}_{1}^{i_{1}}\cdot\cdot\cdot\tilde{k}%
_{j}^{i_{j}}\left(  z^{\ast}-t-\mu\right)  ^{m},$ denoted by $g.$ By the
choice of $k_{j},$ we know that
\[
\tilde{k}_{j}=-k_{j}.
\]
By Lemma \ref{Theta}, the index of $k_{1}^{i_{1}}\cdot\cdot\cdot k_{j}^{i_{j}%
}z^{m}$ is $\left(  -1\right)  ^{\frac{n\left(  n+1\right)  }{2}}.$ Hence
using the fact that $\mu$ is real, we get
\begin{align*}
f^{\ast}  &  =-k_{1}^{i_{1}}\cdot\cdot\cdot k_{j}^{i_{j}}\left(  -z^{\ast
}-t\right)  ^{m}\\
&  =\left(  -1\right)  ^{1+i_{1}+...+i_{j}+m}\tilde{k}_{1}^{i_{1}}\cdot
\cdot\cdot\tilde{k}_{j}^{i_{j}}\left(  z^{\ast}+t\right)  ^{m}\\
&  =\left(  -1\right)  ^{\frac{n\left(  n+1\right)  }{2}+1}g.
\end{align*}
This completes the proof.
\end{proof}

Taking for example $\mu=2,t=-1,$ $k_{2}=-4,k_{3}=-16,$ we get $\Theta_{2,t}\left(
z,K\right)  =\left(  z+1\right)  ^{3}-4.$ $\allowbreak$It has one real root
and a pair of conjugate roots, forming a regular triangle, and given
numerically by
\begin{equation}
p_{1}=0.587\,4,p_{2}=-1.\,\allowbreak793\,7-1.\,\allowbreak374\,7i,p_{3}%
=-1.\,\allowbreak793\,7+1.\,\allowbreak374\,7i. \label{p}%
\end{equation}

In the sequel, for simplicity, we shall choose $\mu=1.$ Then $t=k_{j}%
=-\frac{1}{2}.$ Let us then denote the corresponding polynomial $\Theta
_{n,t}\left(  z,K\right)  $ by $A_{n}\left(  z\right)  .$ Then $\tilde{\Theta
}_{n}\left(  z,\mu,K\right)  $ is equal to $A_{n}\left(  -z\right)  ,$ which
we denote by $B_{n}\left(  z\right)  .$

\medskip

Since our traveling wave solutions will roughly speaking have vortices at the
roots of $A_{n},$ it is natural to ask that whether all the roots of $A_{n}$
are simple. This question seems to be nontrivial.

\begin{lemma}
\label{a}Let $P\left(  z\right)  ,Q\left(  z\right)  $ be two polynomials
satisfying
\begin{equation}
P^{\prime\prime}Q-2P^{\prime}Q^{\prime}+PQ^{\prime\prime}=2\mu\left(
P^{\prime}Q-PQ^{\prime}\right)  , \label{p1}%
\end{equation}
or
\begin{equation}
P^{\prime\prime}Q-2P^{\prime}Q^{\prime}+PQ^{\prime\prime}=0. \label{p2}%
\end{equation}
Suppose $P\left(  \xi\right)  =0,Q\left(  \xi\right)  \neq0,$ for some $\xi.$
Then $\xi$ is a simple root of $P.$
\end{lemma}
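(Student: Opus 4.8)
The plan is to argue by contradiction at the level of local vanishing orders. Suppose $\xi$ is a root of $P$ with multiplicity $\ge 2$, while $Q(\xi)\neq 0$. I would write the local Taylor expansion of $P$ at $\xi$ as $P(z)=a\,(z-\xi)^{k}+\cdots$ with $a\neq 0$ and $k\ge 2$ the multiplicity, and set $Q(\xi)=b\neq 0$. The idea is to substitute these expansions into the left-hand side $P''Q-2P'Q'+PQ''$ of either \eqref{p1} or \eqref{p2} and read off the coefficient of the lowest surviving power of $(z-\xi)$, then compare it against the corresponding lowest-order term on the right-hand side, which should vanish to higher order and force $a=0$.

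Concretely, I would track orders: $P''$ has order $k-2$, $P'$ has order $k-1$, while $Q$, $Q'$, $Q''$ all have order $0$ (finite, with $Q(\xi)=b\neq 0$) at $\xi$. Hence in $P''Q-2P'Q'+PQ''$ the dominant contribution is $P''Q$, of order $(z-\xi)^{k-2}$, with leading coefficient $a\,k(k-1)\,b$. The other two terms $-2P'Q'$ and $PQ''$ are of orders $k-1$ and $k$ respectively, hence strictly higher order, so they do not affect the coefficient of $(z-\xi)^{k-2}$. On the right-hand side, for \eqref{p2} the term is simply $0$, and for \eqref{p1} the expression $2\mu(P'Q-PQ')$ has $P'Q$ of order $k-1$ and $PQ'$ of order $k$, so the whole right side is of order $\ge k-1$. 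In both cases, matching the coefficient of the lowest power $(z-\xi)^{k-2}$ forces $a\,k(k-1)\,b=0$. Since $k\ge 2$ gives $k(k-1)\neq 0$ and $b\neq 0$, we conclude $a=0$, contradicting the assumption that $a$ is the leading coefficient of the order-$k$ expansion.

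I expect no serious obstacle here; the argument is a clean leading-order comparison, and the only care needed is to ensure that the three terms on the left are correctly ordered so that $P''Q$ genuinely dominates. The one place to be slightly cautious is the borderline case $k=2$, where $P''$ has order $0$ and thus $P''Q$, $P'Q'$, and the right-hand side terms could a priori all meet at low order; I would verify explicitly that even then the coefficient of $(z-\xi)^{0}$ on the left is $2ab\neq 0$ while every right-hand contribution carries at least one factor $(z-\xi)$ and hence vanishes at $z=\xi$, so the contradiction persists. This confirms that any root of $P$ not shared with $Q$ must be simple, which is exactly the statement; applying it with the roles of $P$ and $Q$ interchanged (using the symmetry of \eqref{p1}–\eqref{p2} under $\mu\mapsto-\mu$) would give the analogous simplicity for roots of $Q$.
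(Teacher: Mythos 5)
Your proof is correct and is essentially the paper's own argument: both isolate the term $P''Q$, which vanishes at $\xi$ to order exactly $k-2$, and observe that every other term in \eqref{p1} or \eqref{p2} vanishes to order at least $k-1$, yielding the contradiction $a\,k(k-1)\,Q(\xi)=0$. The only difference is presentational (explicit Taylor coefficients versus the paper's multiplicity count), and your extra check of the borderline case $k=2$ is a nice, though not strictly necessary, precaution.
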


\begin{proof}
We prove the lemma assuming $\left(  \ref{p1}\right)  .$ The case of $\left(
\ref{p2}\right)  $ is similar.

Suppose $\xi$ is root of $P$ with multiplicity $k\geq2.$ We have
\[
P^{\prime\prime}Q=2P^{\prime}Q^{\prime}-PQ^{\prime\prime}+2\mu\left(
P^{\prime}Q-PQ^{\prime}\right)  .
\]
Then $\xi$ is a root of the right hand side polynomial with multiplicity at
least $k-1.$ But its multiplicity in $P^{\prime\prime}Q$ is $k-2.$ This is a contradiction.
\end{proof}

\begin{lemma}
\label{b}Suppose $P\left(  z\right)  ,Q\left(  z\right)  $ are two polynomials
satisfying $\left(  \ref{p1}\right)  $ or $\left(  \ref{p2}\right)  .$ Let
$\xi$ be a common root of $P$ and $Q.$ Assume $\xi$ is a simple root of $Q.$
Then $\xi$ can not be a simple root of $P.$
\end{lemma}

\begin{proof}
We prove this lemma assuming $\left(  \ref{p2}\right)  .$ The case of $\left(
\ref{p1}\right)  $ is similar.

Assume to the contrary that $\xi$ is a simple root of $P.$ Then%
\[
2P^{\prime}\left(  \xi\right)  Q^{\prime}\left(  \xi\right)  \neq0.
\]
But this contradicts with the equation $\left(  \ref{p2}\right)  .$ This
finishes the proof.
\end{proof}

We introduce the following assumption:

\noindent
(\textbf{A}). \textit{The polynomials }$A_{n}\left(  z\right)  $\textit{ and
}$A_{n-1}\left(  z\right)  $\textit{ have no common roots.}

\begin{lemma}
Suppose the assumption (A) holds. Then $A_{n}$ has no repeated roots.
Moreover, $A_{n}\left(  \xi\right)  $ and $A_{n}\left(  -\xi\right)  $ have no
common roots.
\end{lemma}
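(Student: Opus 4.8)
The plan is to obtain both conclusions from Lemma \ref{a}, applied to two different pairs of polynomials, each of which solves a generalized Tkachenko equation. I will use two structural facts. First, the consecutive polynomials $A_n$ and $A_{n-1}$ satisfy the $\mu=0$ equation $\left(\ref{p2}\right)$; this is the statement that two consecutive Adler--Moser polynomials form a $\mu=0$ vortex equilibrium, and it is translation covariant, so it survives the common shift defining $A_n$. Second, the generating pair $\{A_n,B_n\}$ satisfies an equation of the form $\left(\ref{pq}\right)$ --- this is precisely what makes the roots of $A_n$ and $B_n$ the symmetric equilibrium of Theorem \ref{main}; for the argument below only the left-hand side will matter, since the right-hand side $2\mu(P'Q-PQ')$ vanishes automatically at any common zero of $P$ and $Q$. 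Finally I record the identity $B_n(z)=A_n(-z)$, so that $B_n'(z)=-A_n'(-z)$.

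First I would prove that $A_n$ has only simple roots. Let $\xi\in R(A_n)$ be arbitrary. By assumption (A) we have $A_{n-1}(\xi)\neq0$, so the pair $(P,Q)=(A_n,A_{n-1})$ solving $\left(\ref{p2}\right)$ meets the hypotheses of Lemma \ref{a}, which then forces $\xi$ to be a simple root of $A_n$. Since $\xi$ was arbitrary, $A_n$ has no repeated root.

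Next I would rule out common roots of $A_n(z)$ and $A_n(-z)=B_n(z)$. Suppose to the contrary that $\xi$ is such a common root; then $\xi$ and $-\xi$ both lie in $R(A_n)$. Evaluating the Tkachenko equation for $\{A_n,B_n\}$ at $z=\xi$ annihilates every term that carries a factor $A_n(\xi)$ or $B_n(\xi)$, leaving $-2A_n'(\xi)B_n'(\xi)=0$, i.e. $A_n'(\xi)\,A_n'(-\xi)=0$. Thus at least one of $\xi,-\xi$ is a repeated root of $A_n$, contradicting the first step. Hence $A_n$ and $B_n$ share no root.

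The two deductions above are each a single line once the bilinear relations are available, so the real content --- and the step I expect to be the main obstacle --- is justifying that $(A_n,A_{n-1})$ solves $\left(\ref{p2}\right)$. This is where the Adler--Moser/Darboux structure genuinely enters: it is not a formal consequence of the single identity recorded for $(\Theta_n,\tilde\Theta_n)$, and it is exactly the ingredient that converts the root-separation hypothesis (A) into control of multiplicities through Lemma \ref{a}. I would derive it from the Wronskian definition of $\Theta_n$, for instance through the Adler--Moser bilinear recursion relating $\Theta_{n+1},\Theta_n,\Theta_{n-1}$, and I would check the normalization directly on the small cases $(\Theta_2,\Theta_1)$ and $(\Theta_3,\Theta_2)$, for which $\left(\ref{p2}\right)$ can be verified by hand.
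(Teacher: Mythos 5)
Your proposal is correct and takes essentially the same route as the paper: the first step (applying Lemma \ref{a} to the pair $(A_{n},A_{n-1})$, which satisfies the $\mu=0$ relation $\left(\ref{anan}\right)$, with assumption (A) guaranteeing $A_{n-1}(\xi)\neq0$ at any root $\xi$ of $A_{n}$) is identical, and your direct evaluation of the Tkachenko equation for $(A_{n},B_{n})$ at a common root, yielding $A_{n}'(\xi)\,A_{n}'(-\xi)=0$, is precisely the content and proof of the paper's Lemma \ref{b}, which the paper cites at that point instead of inlining. The recursion $\left(\ref{anan}\right)$ that you flag as the main obstacle is likewise simply asserted by the paper as a known fact from the Adler--Moser theory, so your extra care there goes beyond, but does not diverge from, the paper's argument.
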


\begin{proof}
We know that the sequence of Adler-Moser polynomials satisfy the following
recursion relation%
\begin{equation}
A_{n}^{\prime\prime}A_{n-1}-2A_{n}^{\prime}A_{n-1}^{\prime}+A_{n}%
A_{n-1}^{\prime\prime}=0,\text{ for any }n. \label{anan}%
\end{equation}
By Lemma \ref{a}, any root of $A_{n}$ is a simple root. Similarly, any root of
$A_{n}\left(  -z\right)  $ is a simple root.

Now suppose to the contrary that $\xi$ is a common root of $A_{n}\left(
z\right)  $ and $A_{n}\left(  -z\right)  .$ Letting $B_{n}\left(  x\right)
=A_{n}\left(  -x\right)  ,$ we have
\[
A_{n}^{\prime\prime}B_{n}-2A_{n}^{\prime}B_{n}^{\prime}+A_{n}B_{n}%
^{\prime\prime}=2\mu\left(  A_{n}^{\prime}B_{n}-A_{n}B_{n}^{\prime}\right)  .
\]
Then by Lemma \ref{b}, either $\xi$ is a repeated root of $A_{n}\left(
z\right)  ,$ or it is a repeated root of $A_{n}\left(  -z\right)  $. This is a contradiction.
\end{proof}

\subsection{Linearization of the symmetric configuration}

Our construction of traveling wave requires that the vortex configuration we
found is nondegenerate in the symmetric setting (in the sense of Lemma
\ref{symmetry}). For small number of vortices, this can be verified directly. To
explain this, we now consider the case of $n=2.$ Let $p_{1},p_{2},p_{3}$ be
the three roots of the Adler-Moser polynomial appeared in Lemma \ref{symmetry}%
. Here $p_{1}$ is the real root and $p_{3}=\bar{p}_{2}.$ Let $q_{i}%
=p_{i}^{\ast}.$ For $z_{1}\in\mathbb{R},z_{2}\in\mathbb{C},$ we define the
force map
\begin{align}
F_{1}\left(  z_{1},z_{2}\right)   &  =\frac{1}{z_{1}-z_{2}}+\frac{1}%
{z_{1}-\bar{z}_{2}}-\frac{1}{2z_{1}}-\frac{1}{z_{1}+z_{2}}-\frac{1}%
{z_{1}-z_{2}^{\ast}},\\
F_{2}\left(  z_{1},z_{2}\right)   &  =\frac{1}{z_{2}-z_{1}}+\frac{1}%
{z_{2}-\bar{z}_{2}}-\frac{1}{z_{2}+z_{1}}-\frac{1}{2z_{2}}-\frac{1}%
{z_{2}-z_{2}^{\ast}}.
\end{align}
We have in mind that $z_{1}$ represents the vortex on the real axis and
$z_{2}$ represents the one lying in the second quadrant. Note that by
symmetry, $F_{1}\left(  z_{1},z_{2}\right)  \in\mathbb{R}.$ Writing
$z_{1}=a_{1},z_{2}=a_{2}+b_{2}i,$ where $a_{i},b_{i}\in\mathbb{R},$ we can
define
\[
F\left(  a_{1},a_{2},b_{2}\right)  :=\left(  F_{1},\operatorname{Re}%
F_{2},\operatorname{Im}F_{2}\right)  .
\]
The configuration $\left(  p_{1},p_{2},p_{3},q_{1},q_{2},q_{3}\right)  $ is
called nondegenerate, if
\[
\det DF\left(  p_{1},\operatorname{Re}p_{2},\operatorname{Im}p_{2}\right)
\neq0.
\]
Numerical computation shows that $\det DF\left(  p_{1},\operatorname{Re}%
p_{2},\operatorname{Im}p_{2}\right)  \neq0.$ Hence it is nondegenerate. It
turns out for $n$ large, this procedure is tedious and we have to find other
ways to overcome this difficulty.

\medskip

In the general case, let $\mathbf{p}=\left(  p_{1},...,p_{n\left(  n+1\right)
/2}\right)  ,\mathbf{q}=\left(  q_{1},...,q_{n\left(  n+1\right)  /2}\right)
.$ Define the map $F:$%
\[
\left(  \mathbf{p},\mathbf{q}\right)  \rightarrow\left(  F_{1},...,F_{n\left(
n+1\right)  /2},G_{1},...,G_{n\left(  n+1\right)  /2}\right)  ,
\]
where
\begin{align}
F_{k}  &  =\sum_{j\neq k}\frac{1}{p_{k}-p_{j}}-\sum_{j}\frac{1}{p_{k}-q_{j}%
},\\
G_{k}  &  =\sum_{j\neq k}\frac{1}{q_{k}-q_{j}}-\sum_{j}\frac{1}{q_{k}-p_{j}}.
\end{align}
Let $a$\thinspace$=\left(  a_{1},...,a_{n\left(  n+1\right)  /2}\right)  $ be
the roots of $A_{n}$ and  $b=-\left(  \bar{a}_{1},...,\bar{a}_{n\left(
n+1\right)  /2}\right)  .$ Moreover, we assume that for $i=1,...,i_{0},$
\[
a_{2i-1}=\bar{a}_{2i},
\]
while for $i=2i_{0}+1,...,n\left(  n+1\right)  /2,$ $\operatorname{Im}%
a_{i}=0.$ Consider the linearization of $F$ at $\left(  \mathbf{p}%
,\mathbf{q}\right)  =\left(  a,b\right)  .$ Denote it by $DF|_{\left(
a,b\right)  }.$ This is a map from $\mathbb{C}^{n\left(  n+1\right)  }$ to
$\mathbb{C}^{n\left(  n+1\right)  }.$

We remark that the points in $a\cup b$ lie \textquotedblleft
approximately\textquotedblright\ on $n$ circles (not exactly on these circles),
and \textquotedblleft approximately\textquotedblright\ on a certain number of
straight lines.

The map $DF|_{\left(  a,b\right)  }$ always has kernel. Indeed, for any
parameter $K=\left(  k_{2},...,k_{n}\right)  ,$ $\Theta_{n}\left(  z,K\right)
$ and $\tilde{\Theta}_{n}\left(  z,K\right)  $ satisfy
\[
\Theta_{n}^{\prime\prime}\tilde{\Theta}_{n}-2\Theta_{n}^{\prime}\tilde{\Theta
}_{n}^{\prime}+\Theta_{n}\tilde{\Theta}_{n}^{\prime\prime}=2\mu\left(
\Theta_{n}^{\prime}\tilde{\Theta}_{n}-\Theta_{n}\tilde{\Theta}_{n}^{\prime
}\right)  .
\]
Differentiating this equation with respect to the translation and the
parameters $k_{i}$,$i=2,...,n,$ we get $n$ (complex) dimensional kernel. Denote
them by $\varpi_{1},...,\varpi_{n}.$

Let $\xi=\left(  \xi_{1},...,\xi_{n\left(  n+1\right)  /2}\right)
\in\mathbb{C}^{n\left(  n+1\right)  /2},$ $\eta=\left(  \eta_{1}%
,...,\eta_{n\left(  n+1\right)  /2}\right)  \in\mathbb{C}^{n\left(
n+1\right)  /2},$ the vector $\left(  \xi,\eta\right)  ,$ with $\eta=\xi
^{\ast}$, is called symmetric if for $i=1,...,i_{0},$
\[
\xi_{2i-1}=\bar{\xi}_{2i},
\]
while for $i=2i_{0}+1,...,n\left(  n+1\right)  /2,$ $\operatorname{Im}\xi
_{i}=0.$

\medskip

The main result of this section is the nondegeneracy of the vortex
configuration given by $A_{n}$, i.e.e all the kernels are given by the above:

\begin{proposition}
\label{sy}Suppose $DF|_{\left(  a,b\right)  }\left(  \xi,\eta\right)  =0$ and
$\left(  \xi,\eta\right)  $ is symmetric. Then $\xi=\eta=0.$
\end{proposition}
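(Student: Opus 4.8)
The plan is to translate $DF|_{(a,b)}(\xi,\eta)=0$ into an identity for the generating polynomials, remove all but an $n$--parameter family of solutions by a degree count, and finally annihilate the remaining freedom with the reflection symmetry. First I would pass from vortex displacements to polynomial variations. Write $P_{0}(z)=\prod_{k}(z-a_{k})=A_{n}(z)$ and $Q_{0}(z)=\prod_{k}(z-b_{k})$, with $D=n(n+1)/2$; by Lemma \ref{symmetry} both are real, $Q_{0}(z)=(-1)^{D}P_{0}(-z)$, and they solve the generalized Tkachenko equation $B(P_{0},Q_{0})=0$, where $B_{\nu}(P,Q):=P''Q-2P'Q'+PQ''-2\nu(P'Q-PQ')$ and $B:=B_{\mu}$. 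A displacement $a_{k}\mapsto a_{k}+\xi_{k}$, $b_{k}\mapsto b_{k}+\eta_{k}$ induces variations $\dot P,\dot Q$ of degree $\le D-1$ with $\xi_{k}=-\dot P(a_{k})/P_{0}'(a_{k})$ and $\eta_{k}=-\dot Q(b_{k})/Q_{0}'(b_{k})$, and conversely every such pair arises this way, so the $n(n+1)$ complex parameters of $(\dot P,\dot Q)$ match those of $(\xi,\eta)$. Since $F_{k}-\mu=B(P,Q)(a_{k})/\bigl(2P'(a_{k})Q(a_{k})\bigr)$ and $G_{k}+\mu=B(P,Q)(b_{k})/\bigl(2P(b_{k})Q'(b_{k})\bigr)$, and $B(P_{0},Q_{0})\equiv0$, differentiating these quotients shows that $DF|_{(a,b)}(\xi,\eta)=0$ (i.e.\ all $DF_{k}=DG_{k}=0$) is equivalent to the single polynomial $R:=B(\dot P,Q_{0})+B(P_{0},\dot Q)$ vanishing at every root of $P_{0}Q_{0}$.

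Next comes the degree count, which is where the earlier lemmas enter. Under assumption (A) the polynomial $A_{n}$ has only simple roots and $A_{n}(z),A_{n}(-z)$ share no root, so $P_{0}Q_{0}$ has exactly $n(n+1)$ distinct roots. On the other hand only the $\mu$--terms of $B$ can contribute in top degree, so $\deg R\le 2D-2=n(n+1)-2$. A polynomial of degree below $n(n+1)$ vanishing at $n(n+1)$ points is identically zero; hence $R\equiv0$, i.e.\ $(\dot P,\dot Q)$ solves the linearized Tkachenko equation.

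Now I would feed in the symmetry. Condition (i) (conjugate pairs and real components) says $\dot P,\dot Q$ have real coefficients, while condition (ii) ($\eta=\xi^{\ast}$) becomes $\dot Q(z)=(-1)^{D}\dot P(-z)$. Using the reflection identity $B_{\mu}(f,g)(-z)=B_{-\mu}(\widehat f,\widehat g)(z)$ with $\widehat f(z)=f(-z)$, the relation $B_{-\mu}(g,f)=B_{\mu}(f,g)$, and $Q_{0}=(-1)^{D}\widehat{P_{0}}$, the identity $R\equiv0$ collapses to the statement that $B(\dot P,Q_{0})$ is an odd polynomial. Equivalently, on the solution space $\mathcal K:=\{(\dot P,\dot Q):R\equiv0\}$ the linear involution $T(\dot P,\dot Q):=\bigl((-1)^{D}\widehat{\dot Q},(-1)^{D}\widehat{\dot P}\bigr)$ is well defined (one checks $R\circ T\,(z)=R(-z)$, so $\mathcal K$ is $T$--invariant), and the symmetric kernel is precisely the set of real points of the $(+1)$--eigenspace $\mathcal K_{+}$. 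It therefore suffices to prove $\mathcal K_{+}=0$: no nonzero $\dot P$ of degree $\le D-1$ makes $B(\dot P,Q_{0})$ odd.

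To close this I would identify $\mathcal K$ with the tangent space of the $n$--parameter Adler--Moser family. Differentiating $B(\Theta_{n},\tilde\Theta_{n})=0$ in the translation and in $k_{2},\dots,k_{n}$ produces the $n$ explicit solutions $\varpi_{1},\dots,\varpi_{n}\in\mathcal K$, and Lemma \ref{symmetry} (where the reflected parameters satisfy $\tilde k_{j}=-k_{j}$) shows that the reflection negates the parameters to first order at the symmetric configuration, so each $\varpi_{i}$ lies in the $(-1)$--eigenspace $\mathcal K_{-}$; if the $\varpi_{i}$ span $\mathcal K$ then $\mathcal K_{+}=0$ and the proof is finished. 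The main obstacle is exactly this spanning statement, i.e.\ the upper bound $\dim_{\mathbb C}\mathcal K\le n$, ruling out solutions of the linearized Tkachenko equation beyond the parameter variations. I expect to obtain it from the integrable structure, writing $B(\dot P,Q_{0})=\mathcal M\dot P$ for the second--order operator $\mathcal M u=Q_{0}u''-2(Q_{0}'+\mu Q_{0})u'+(Q_{0}''+2\mu Q_{0}')u$, whose kernel contains $P_{0}$, and the analogous $\mathcal M^{\sharp}\dot Q=B(P_{0},\dot Q)$, then using the associated Schr\"odinger equation $\psi''+2(\ln Q_{0})''\psi=\mu^{2}\psi$ and the Darboux--Crum description of its polynomial solutions to control the coupled system $\mathcal M\dot P+\mathcal M^{\sharp}\dot Q=0$. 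This spectral and Wronskian bookkeeping is the part I expect to be delicate.
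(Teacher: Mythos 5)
Your opening reduction is correct, and it is in fact a cleaner formulation than the paper's own setup: the identities $F_{k}-\mu=B(P,Q)(p_{k})/\bigl(2P'(p_{k})Q(p_{k})\bigr)$ and $G_{k}+\mu=B(P,Q)(q_{k})/\bigl(2P(q_{k})Q'(q_{k})\bigr)$, the degree count (the second-order part of $R$ has degree at most $n(n+1)-3$ and the $\mu$-part at most $n(n+1)-2$), and the fact that $P_{0}Q_{0}$ has $n(n+1)$ distinct roots (assumption (A) combined with Lemmas \ref{a} and \ref{b}) do legitimately convert $DF|_{(a,b)}(\xi,\eta)=0$ into the linearized Tkachenko equation $B(\dot P,Q_{0})+B(P_{0},\dot Q)\equiv0$. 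Your symmetry bookkeeping is also sound: the involution induced on the parameters $(t,k_{2},\dots,k_{n})$ is affine with linear part $-\mathrm{Id}$ and the symmetric configuration as its unique fixed point — this is exactly what the paper's uniqueness lemma (proved by induction on $n$) encodes — so the tangent vectors $\varpi_{i}$ are anti-invariant and $\mathcal{K}_{+}\cap\mathrm{span}\{\varpi_{1},\dots,\varpi_{n}\}=0$.

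However, the proof stops precisely where the real work begins. The step you defer — the bound $\dim_{\mathbb{C}}\mathcal{K}\le n$, i.e.\ that every degree-constrained solution of the linearized Tkachenko equation is a combination of the parameter derivatives $\varpi_{i}$ — is the entire content of the paper's Proposition \ref{li}, and its proof is the technical heart of Section \ref{AM}. There one linearizes the recursion (\ref{anan}) and the Darboux transformation (\ref{Dar2}) to obtain the coupled first-order system (\ref{s}) for $(f_{n},\sigma_{n})$, descends from level $N$ to level $0$ by the explicit integral formulas (\ref{s1}) and (\ref{s2}), and then runs a delicate singularity analysis: the functions $f_{n}$ are a priori multivalued with possible poles at the roots of $A_{n},\dots,A_{N}$, and one must show via the identity (\ref{f01}) that $f_{1}$ can have a pole only at $z=1$, whence $f_{0}=\sigma_{0}=0$. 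Note, moreover, that even this argument requires the additional, numerically verified hypothesis that $A_{n}$ and $A_{j}$ have no common roots for $n\neq j\le N$ — this is one reason the theorem carries the restriction $N\le34$ — so the dimension bound is genuinely delicate and not something that ``spectral and Wronskian bookkeeping'' delivers for free. Your instinct that the Darboux--Crum structure is the right tool is exactly the route the paper takes, but since you have not carried out that analysis, the kernel could, for all your argument shows, contain symmetric elements outside $\mathrm{span}\{\varpi_{i}\}$, and Proposition \ref{sy} is not yet proved.
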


Proposition \ref{sy} is proved by the linearization of Darboux transformation and recursive relations.

\medskip

\subsection{Darboux transformation and the nondegeneracy of the configuration}

We first recall the following classical Darboux transformation theorem(Theorem
2.1, \cite{Ma}).

\begin{theorem}
Suppose
\begin{align*}
-\Psi^{\prime\prime}+u\Psi &  =\lambda\Psi,\\
-\Psi_{1}^{\prime\prime}+u\Psi_{1}  &  =\lambda_{1}\Psi_{1}.
\end{align*}
Then the function $\Phi:=W\left(  \Psi_{1},\Psi\right)  /\Psi_{1}$ satisfies
\[
-\Phi^{\prime\prime}+\tilde{u}\Phi=\lambda\Phi,
\]
where $\tilde{u}:=u-2\left(  \ln\Psi_{1}\right)  ^{\prime\prime}.$ The
function $\Phi$ is called the Darboux transformation of $\Psi.$
\end{theorem}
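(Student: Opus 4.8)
The plan is to exploit the first-order factorization of the one-dimensional Schr\"odinger operator that underlies every Darboux transformation. Writing $L := -\partial_x^2 + u$ and setting $\sigma := \left(\ln\Psi_1\right)' = \Psi_1'/\Psi_1$, I first observe that the Wronskian expands as $W\left(\Psi_1,\Psi\right) = \Psi_1\Psi' - \Psi_1'\Psi$, so that
\[
\Phi = \frac{W\left(\Psi_1,\Psi\right)}{\Psi_1} = \Psi' - \sigma\Psi = A\Psi, \qquad A := \partial_x - \sigma .
\]
Thus $\Phi$ is obtained from $\Psi$ by applying the first-order operator $A$, and the target potential is exactly $\tilde{u} = u - 2\sigma'$. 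The whole theorem will follow once I establish an intertwining relation between $L$ and $\tilde{L} := -\partial_x^2 + \tilde{u}$ through $A$.

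The key algebraic input is a Riccati identity for $\sigma$. Since $\Psi_1$ solves $-\Psi_1'' + u\Psi_1 = \lambda_1\Psi_1$, dividing by $\Psi_1$ gives $\Psi_1''/\Psi_1 = u - \lambda_1$, and differentiating $\sigma = \Psi_1'/\Psi_1$ yields
\[
\sigma' = \frac{\Psi_1''}{\Psi_1} - \sigma^2 = u - \lambda_1 - \sigma^2 .
\]
Introducing the formal adjoint $A^\ast := -\partial_x - \sigma$, a one-line computation of each second-order operator shows
\[
A^\ast A = -\partial_x^2 + \left(\sigma' + \sigma^2\right) = L - \lambda_1, \qquad
A A^\ast = -\partial_x^2 + \left(\sigma^2 - \sigma'\right) = \tilde{L} - \lambda_1,
\]
where the first identity uses $\sigma' + \sigma^2 = u - \lambda_1$ and the second uses $\sigma^2 - \sigma' = \left(u - \lambda_1\right) - 2\sigma' = \tilde{u} - \lambda_1$, both read off directly from the Riccati relation together with $\tilde{u} = u - 2\sigma'$.

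These two factorizations give the intertwining $A L = \tilde{L} A$: indeed $A\left(L - \lambda_1\right) = A A^\ast A = \left(A A^\ast\right) A = \left(\tilde{L} - \lambda_1\right) A$, and cancelling the common term $\lambda_1 A$ leaves $A L = \tilde{L} A$. Applying both sides to $\Psi$ and using $L\Psi = \lambda\Psi$ then gives
\[
\tilde{L}\Phi = \tilde{L} A\Psi = A L\Psi = \lambda A\Psi = \lambda\Phi,
\]
which is precisely $-\Phi'' + \tilde{u}\Phi = \lambda\Phi$.

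I do not expect a genuine obstacle here, since the statement is the classical Darboux lemma and the argument merely pushes through the factorization identities. The only point demanding care is the bookkeeping in verifying $A A^\ast = \tilde{L} - \lambda_1$, where one must invoke the Riccati relation to convert $\sigma^2 - \sigma'$ into $\tilde{u} - \lambda_1$. An entirely equivalent route, if one prefers to avoid operators, is to compute $\Phi'$ and $\Phi''$ directly, substitute $\Psi'' = \left(u - \lambda\right)\Psi$ along with $\sigma'' = u' - 2\sigma\sigma'$, and watch the coefficients of $\Psi$ and $\Psi'$ in $-\Phi'' + \tilde{u}\Phi$ collapse to those of $\lambda\Phi$; the $\Psi'$ coefficient reduces to $\lambda$ immediately, while the $\Psi$ coefficient vanishes after the $\sigma''$ substitution, leaving $-\lambda\sigma$. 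One should also record the harmless caveat that the identity holds on any interval where $\Psi_1 \neq 0$, so that $\sigma = \left(\ln\Psi_1\right)'$ is well defined.
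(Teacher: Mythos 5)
Your proof is correct. Note, however, that the paper itself offers no proof of this statement: it is recalled verbatim as a classical result (Theorem 2.1 of Matveev--Salle \cite{Ma}), so there is no internal argument to compare against; your write-up supplies the standard justification that the paper delegates to the literature. The route you take --- setting $\sigma=\Psi_1'/\Psi_1$, deriving the Riccati identity $\sigma'+\sigma^2=u-\lambda_1$, factorizing $L-\lambda_1=A^{\ast}A$ and $\tilde L-\lambda_1=AA^{\ast}$ with $A=\partial_x-\sigma$, and reading off the intertwining $AL=\tilde L A$ --- is exactly the classical factorization proof, and every step checks: $A^{\ast}Af=-f''+(\sigma'+\sigma^2)f$, $AA^{\ast}f=-f''+(\sigma^2-\sigma')f$, and $\sigma^2-\sigma'=u-\lambda_1-2\sigma'=\tilde u-\lambda_1$ as you claim. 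The alternative direct computation you sketch also closes correctly (the coefficient of $\Psi'$ in $-\Phi''+\tilde u\Phi$ is $\lambda$ and the coefficient of $\Psi$ collapses to $-\lambda\sigma$ after substituting $\sigma''=u'-2\sigma\sigma'$), and your caveat that everything lives on an interval where $\Psi_1\neq 0$ is the right hypothesis to record --- in the paper's application $\Psi_1$ is a ratio of Adler--Moser polynomials, so the identity is used away from their zeros and extends by continuity of the polynomial relations.
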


Let $\phi_{n}=\frac{A_{n+1}}{A_{n}}.$ Consider the equation
\begin{equation}
\phi^{\prime\prime}+2\left(  \ln A_{n}\right)  ^{\prime\prime}\phi=0.
\label{Sch}%
\end{equation}
The second order recursive relation $\left(  \ref{anan}\right)  $ is
equivariant to the fact $\phi_{n}$ is solution of $\left(  \ref{Sch}\right)
.$ From \cite{Adler}, we know that $\phi_{n-1}^{-1}$ is also a solution.

On the other hand, we have the following Darboux transformation relation
\[
\left(  2n+1\right)  \phi_{n}^{-1}=\frac{W\left(  \phi_{n-1}^{-1},\phi
_{n}\right)  }{\phi_{n}}.
\]
Indeed, this is equivalent to the relation
\[
\left(  2n+1\right)  A_{n}^{2}=A_{n-1}A_{n+1}^{\prime}-A_{n-1}^{\prime}%
A_{n+1}.
\]
Note that the constant $2n+1$ makes the coefficient of the leading order term
of $A_{n}$ to be $1.$ We also have the reversed transformation
\[
\left(  2n+3\right)  \phi_{n}=\frac{W\left(  \phi_{n}^{-1},\phi_{n+1}\right)
}{\phi_{n}^{-1}}.
\]
Indeed, this is equivalent to
\[
\left(  2n+3\right)  A_{n+1}^{2}=A_{n}A_{n+2}^{\prime}-A_{n}^{\prime}A_{n+2}.
\]

We now recall that the function $\psi_{n}\left(  z\right)  =\frac{B_{n}}%
{A_{n}}e^{\mu z}$ satisfying
\[
\psi_{n}^{\prime\prime}+2\left(  \ln A_{n}\right)  ^{\prime\prime}\psi_{n}%
=\mu^{2}\psi_{n}.
\]
Note that
\[
\psi_{n}=\frac{W\left(  \theta_{1},...,\theta_{2n-1},e^{\mu z}\right)
}{W\left(  \theta_{1},...,\theta_{2n-1}\right)  }.
\]
Then the Darboux transformation\cite{Ma} between $\psi_{n}$ and $\psi_{n+1}$
is given by
\begin{equation}
\psi_{n+1}=\frac{W\left(  \psi_{n},\phi_{n}\right)  }{\phi_{n}}. \label{Dar}%
\end{equation}
Explicitly,
\begin{equation}
\psi_{n+1}=\psi_{n}\left(  \ln\phi_{n}\right)  ^{\prime}-\psi_{n}^{\prime}.
\label{Dar2}%
\end{equation}
Let us verify this for the $n=1$ case. We have%
\begin{align*}
\psi_{1}  &  =\frac{W\left(  \theta_{1},e^{\mu z}\right)  }{W\left(
\theta_{1}\right)  }=\frac{\left(  \mu z-1\right)  e^{\mu z}}{z},\\
\phi_{1}  &  =\frac{A_{2}}{A_{1}}=\frac{z^{3}+k_{2}}{z},
\end{align*}
We also have%
\begin{align*}
\psi_{2}  &  =\frac{W\left(  \theta_{1},\theta_{3},e^{\mu z}\right)  }%
{z^{3}+k_{2}}\\
&  =\frac{e^{\mu z}}{z^{3}+k_{2}}\left(  z-z^{2}\mu+\frac{1}{3}\mu^{2}%
k_{2}+\frac{1}{3}z^{3}\allowbreak\mu^{2}\right)  .
\end{align*}
Therefore,%
\begin{align*}
&  \psi_{1}\left(  \ln\phi_{1}\right)  ^{\prime}-\psi_{1}^{\prime}\\
&  =\frac{\left(  \mu z-1\right)  e^{\mu z}}{z}\frac{d}{dz}\left(  \ln
\frac{z^{3}+k_{2}}{z}\right)  -\frac{d}{dz}\left(  \frac{\left(  \mu
z-1\right)  e^{\mu z}}{z}\right) \\
&  =\psi_{2}.
\end{align*}

Next we would like to analyze the linearized Darboux transformation. First of
all, we linearize the equation $\left(  \ref{anan}\right)  $ at $\left(
A_{n},A_{n+1}\right)  .$ We obtain%
\[
\xi_{n}^{\prime\prime}A_{n+1}-2\xi_{n}^{\prime}A_{n+1}^{\prime}+\xi_{n}%
A_{n+1}^{\prime\prime}+A_{n}^{\prime\prime}\xi_{n+1}-2A_{n}^{\prime}\xi
_{n+1}^{\prime}+A_{n}\xi_{n+1}^{\prime\prime}=0.
\]
Let $\xi_{n}=A_{n}\tilde{\xi}_{n}.$ Then
\begin{align*}
&  \left(  A_{n}^{\prime\prime}\tilde{\xi}_{n}+2A_{n}^{\prime}\tilde{\xi}%
_{n}^{\prime}+A_{n}\tilde{\xi}_{n}^{\prime\prime}\right)  A_{n+1}-2\left(
A_{n}^{\prime}\tilde{\xi}_{n}+A_{n}\tilde{\xi}_{n}^{\prime}\right)
A_{n+1}^{\prime}+A_{n}\tilde{\xi}_{n}A_{n+1}^{\prime\prime}\\
&  +A_{n}^{\prime\prime}A_{n+1}\tilde{\xi}_{n+1}-2A_{n}^{\prime}\left(
A_{n+1}^{\prime}\tilde{\xi}_{n+1}+A_{n+1}\tilde{\xi}_{n+1}^{\prime}\right)
+A_{n}\left(  A_{n+1}^{\prime\prime}\tilde{\xi}_{n+1}+2A_{n+1}^{\prime}%
\tilde{\xi}_{n+1}^{\prime}+A_{n+1}\tilde{\xi}_{n+1}^{\prime\prime}\right) \\
&  =0.
\end{align*}
Introducing $f_{n}=\tilde{\xi}_{n}^{\prime},$ we get%
\begin{align*}
&  A_{n}A_{n+1}f_{n}^{\prime}+\left(  2A_{n}^{\prime}A_{n+1}-2A_{n}%
A_{n+1}^{\prime}\right)  f_{n}\\
&  +A_{n}A_{n+1}f_{n+1}^{\prime}+\left(  2A_{n}A_{n+1}^{\prime}-2A_{n}%
^{\prime}A_{n+1}\right)  f_{n+1}\\
&  =0.
\end{align*}
This equation can be written as
\[
f_{n}^{\prime}+2\left(  \ln\frac{A_{n}}{A_{n+1}}\right)  ^{\prime}%
f_{n}+f_{n+1}^{\prime}+2\left(  \ln\frac{A_{n+1}}{A_{n}}\right)  ^{\prime
}f_{n+1}=0.
\]
Hence for any given function $f_{n+1},$ we can solve this equation and get
\begin{align}
f_{n}  &  =\frac{A_{n+1}^{2}}{A_{n}^{2}}\int_{0}^{z}\frac{A_{n}^{2}}%
{A_{n+1}^{2}}\left(  f_{n+1}^{\prime}+2\left(  \ln\frac{A_{n+1}}{A_{n}%
}\right)  ^{\prime}f_{n+1}\right)  ds\nonumber\\
&  =-f_{n+1}+2\frac{A_{n+1}^{2}}{A_{n}^{2}}\int_{0}^{z}\frac{A_{n}^{2}%
}{A_{n+1}^{2}}f_{n+1}^{\prime}ds. \label{s1}%
\end{align}
The last equality follows from integrating by parts for the second term.

Next, we linearize the equation $\left(  \ref{Dar2}\right)  $ at $\left(
A_{n},A_{n+1}\right)  $ and obtain%
\[
\sigma_{n+1}=\sigma_{n}\left(  \ln\phi_{n}\right)  ^{\prime}-\sigma
_{n}^{\prime}+\psi_{n}\left(  \frac{\xi_{n+1}}{A_{n+1}}-\frac{\xi_{n}}{A_{n}%
}\right)  ^{\prime}.
\]
We recall that $\left(  \frac{\xi_{n}}{A_{n}}\right)  ^{\prime}=f_{n}$. Hence
we get the equation%
\[
\sigma_{n}^{\prime}-\sigma_{n}\left(  \ln\phi_{n}\right)  ^{\prime}=\psi
_{n}\left(  f_{n+1}-f_{n}\right)  -\sigma_{n+1}.
\]
From this we get
\begin{equation}
\sigma_{n}=\phi_{n}\int_{0}^{z}\phi_{n}^{-1}\left(  \psi_{n}\left(
f_{n+1}-f_{n}\right)  -\sigma_{n+1}\right)  ds. \label{s2}%
\end{equation}
We are lead to the system
\begin{equation}
\left\{
\begin{array}
[c]{l}%
f_{n}^{\prime}+2\left(  \ln\frac{A_{n}}{A_{n+1}}\right)  ^{\prime}%
f_{n}+f_{n+1}^{\prime}+2\left(  \ln\frac{A_{n+1}}{A_{n}}\right)  ^{\prime
}f_{n+1}=0,\\
\sigma_{n}^{\prime}-\sigma_{n}\left(  \ln\phi_{n}\right)  ^{\prime}=\psi
_{n}\left(  f_{n+1}-f_{n}\right)  -\sigma_{n+1}.
\end{array}
\right.  \label{s}%
\end{equation}
For given function $f_{n+1}$ and $\sigma_{n+1},$ we can solve this system and
get a solution $\left(  f_{n},\sigma_{n}\right)  $ from $\left(
\ref{s1}\right)  ,\left(  \ref{s2}\right)  .$

Let $K_{0}=\left(  -\frac{1}{2},-\frac{1}{2},....\right)  .$ For each fixed
$n,$ and $j=2,...,n,$ we define the polynomials
\begin{align*}
\omega_{n,j}  &  =\left(  \frac{\partial_{k_{j}}\Theta_{n}\left(
z+1,K\right)  |_{K=K_{0}}}{A_{n}}\right)  ^{\prime},\\
\tilde{\omega}_{n,j}  &  =\left(  \frac{\partial_{k_{j}}\tilde{\Theta}%
_{n}\left(  z+1,K\right)  |_{K=K_{0}}}{B_{n}}\right)  ^{\prime}.
\end{align*}
Let
\[
\beta_{n,j}=e^{\mu z}\left(  -\frac{B_{n}\omega_{n,j}}{A_{n}^{2}}+\frac
{\tilde{\omega}_{n,j}}{A_{n}}\right)  .
\]

\begin{lemma}
$f_{n}=\omega_{n,j},\sigma_{n}=\beta_{n,j}$ satisfy the system $\left(
\ref{s}\right)  .$
\end{lemma}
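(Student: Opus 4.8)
The plan is to read the system $(\ref{s})$ as the $k_{j}$-linearization of the two nonlinear identities that hold \emph{identically in} $K$: the second order recursion $(\ref{anan})$ relating $A_{n}$ and $A_{n+1}$, and the Darboux relation $(\ref{Dar2})$ relating $\psi_{n}$ and $\psi_{n+1}$. Since $\omega_{n,j}$, $\tilde{\omega}_{n,j}$ and $\beta_{n,j}$ are by definition assembled from the $k_{j}$-derivatives of the Adler--Moser and modified Adler--Moser polynomials, I would differentiate these two identities with respect to $k_{j}$ and match the outcome against the two lines of $(\ref{s})$. The one algebraic input I would use throughout is the generating-function variation formula $\partial_{k_{j}}\theta_{m}=-\frac{1}{2j-1}\theta_{m-2j+1}$, obtained by differentiating the defining exponential in $k_{j}$; it controls how $\partial_{k_{j}}\Theta_{n}$ and $\partial_{k_{j}}\tilde{\Theta}_{n}$, hence $\omega_{n,j}$ and $\tilde{\omega}_{n,j}$, propagate across the index $n$.

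For the first line of $(\ref{s})$ I would simply reproduce the computation that generated it. Written for the consecutive pair, the recursion reads $A_{n}^{\prime\prime}A_{n+1}-2A_{n}^{\prime}A_{n+1}^{\prime}+A_{n}A_{n+1}^{\prime\prime}=0$ and holds for every $K$; differentiating in $k_{j}$ with $\xi_{m}=\partial_{k_{j}}A_{m}$ gives exactly the linearized relation from which the first line of $(\ref{s})$ was derived after the substitution $\xi_{m}=A_{m}\tilde{\xi}_{m}$, $f_{m}=\tilde{\xi}_{m}^{\prime}$. Because $f_{m}=(\partial_{k_{j}}A_{m}/A_{m})^{\prime}=\omega_{m,j}$, this shows at once that $f_{n}=\omega_{n,j}$, $f_{n+1}=\omega_{n+1,j}$ satisfy the first equation.

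The second line is the crux. Writing the definition compactly as $\beta_{m,j}=\psi_{m}\bigl(\frac{\tilde{\omega}_{m,j}}{B_{m}}-\frac{\omega_{m,j}}{A_{m}}\bigr)$, I would substitute $\beta_{n,j}$ and $\beta_{n+1,j}$ into $\sigma_{n}^{\prime}-\sigma_{n}(\ln\phi_{n})^{\prime}=\psi_{n}(f_{n+1}-f_{n})-\sigma_{n+1}$ and reduce it to a rational identity using three ingredients: the Tkachenko relation $(\ref{pq})$ for the pair $(A_{m},B_{m})$ together with its $k_{j}$-derivative; the Crum-type identities relating consecutive polynomials, such as $\left(2n+1\right)A_{n}^{2}=A_{n-1}A_{n+1}^{\prime}-A_{n-1}^{\prime}A_{n+1}$ and its $B$-analogue, which encode the Darboux step $n\mapsto n+1$; and the variation formula above, which links $\omega_{n,j},\tilde{\omega}_{n,j}$ to $\omega_{n+1,j},\tilde{\omega}_{n+1,j}$. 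A useful organizing device is to differentiate $(\ref{Dar2})$ directly in $k_{j}$: this shows that the genuine variation $\partial_{k_{j}}\psi_{n}$ already solves the second line, so it suffices to check that the difference $\partial_{k_{j}}\psi_{n}-\beta_{n,j}$ satisfies the homogeneous Darboux chain $\delta_{n+1}=\delta_{n}(\ln\phi_{n})^{\prime}-\delta_{n}^{\prime}$, after which $\beta_{n,j}$ automatically solves the inhomogeneous second line as well.

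The main obstacle is precisely this second line. The subtle point is that $\beta_{n,j}$ is built from the \emph{primed} logarithmic variations $\omega_{n,j}=(\partial_{k_{j}}\ln A_{n})^{\prime}$ and $\tilde{\omega}_{n,j}$ rather than from the bare variations $\partial_{k_{j}}A_{n}$, so $\beta_{n,j}$ is \emph{not} literally $\partial_{k_{j}}\psi_{n}$ and the identity does not drop out of naive differentiation; one genuinely has to track how the combination $\tilde{\omega}/B-\omega/A$ transforms under the Darboux step $n\mapsto n+1$. Careful bookkeeping of the shift $z\mapsto z+1$, the evaluation at $K=K_{0}$, and the reflection symmetry exchanging $A_{n}$ and $B_{n}$ from Lemma \ref{symmetry} will be essential to collapse the computation to a provable polynomial identity and thereby close the argument.
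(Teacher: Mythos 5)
Your overall strategy is the right one, and it is what the paper intends (the paper in fact states this lemma with no proof at all): system $(\ref{s})$ was derived as the linearization of the two identities $(\ref{anan})$ and $(\ref{Dar2})$, which hold identically in $K$, so its solutions should be produced by differentiating those identities in $k_{j}$ at $K=K_{0}$. Your treatment of the first line is correct and complete, since $\omega_{m,j}=\left(\partial_{k_{j}}\Theta_{m}(z+1,K)|_{K=K_{0}}/A_{m}\right)^{\prime}$ is exactly the quantity $f_{m}=\tilde{\xi}_{m}^{\prime}$ appearing in the paper's derivation of that line.

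The genuine gap is in your handling of the second line. You correctly noticed that $\beta_{n,j}$, as displayed, is built from the \emph{primed} quantities and hence is not $\partial_{k_{j}}\psi_{n}|_{K=K_{0}}$; but your proposed repair -- that it suffices to check that $\delta_{n}:=\partial_{k_{j}}\psi_{n}-\beta_{n,j}$ satisfies the homogeneous Darboux chain -- is a reduction to a false statement, so it cannot be carried out. Concretely, take $n=1$, $j=2$: $\Theta_{1}$ and $\tilde{\Theta}_{1}$ do not involve $k_{2}$, so $\partial_{k_{2}}\psi_{1}=0=\beta_{1,2}$ and $\delta_{1}=0$; the homogeneous chain would then force $\delta_{2}=0$, i.e. $\beta_{2,2}=\partial_{k_{2}}\psi_{2}$. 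That is false: with $A_{2}=(z+1)^{3}-\frac{1}{2}$, $B_{2}=z^{3}+\frac{1}{2}$, one has $\partial_{k_{2}}\psi_{2}=e^{\mu z}(A_{2}-B_{2})/A_{2}^{2}$, with double poles at the simple roots of $A_{2}$, whereas $\beta_{2,2}=e^{\mu z}\left(\frac{B_{2}A_{2}^{\prime}}{A_{2}^{4}}-\frac{B_{2}^{\prime}}{A_{2}B_{2}^{2}}\right)$ has fourth-order poles there. The same computation shows the lemma itself is false if the primed definitions are taken literally; the primes are a slip of the paper. The intended objects, as formula $(\ref{yita})$ (the quotient rule for $\psi_{n}=\frac{B_{n}}{A_{n}}e^{\mu z}$) makes clear, are the bare variations $\xi_{n}=\partial_{k_{j}}\Theta_{n}(z+1,K)|_{K=K_{0}}$ and $\eta_{n}=\partial_{k_{j}}\tilde{\Theta}_{n}(z+1,K)|_{K=K_{0}}$, i.e. $\beta_{n,j}$ should be $e^{\mu z}\left(-\frac{B_{n}\xi_{n}}{A_{n}^{2}}+\frac{\eta_{n}}{A_{n}}\right)=\partial_{k_{j}}\psi_{n}|_{K=K_{0}}$. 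Once this is recognized, the step you already carried out -- differentiating $(\ref{Dar2})$ in $k_{j}$ -- finishes the second line with no residual term to absorb, in exact parallel with the first line. (One last bookkeeping caution: the sign and normalization in $(\ref{Dar2})$ as printed also need care -- for $n=1$ a direct computation gives $\psi_{1}(\ln\phi_{1})^{\prime}-\psi_{1}^{\prime}=-\frac{B_{2}}{A_{2}}e^{\mu z}$ -- but such constants are independent of $K$, so they affect only the signs carried into $(\ref{s})$, not the differentiation argument.)
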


We also need the following uniqueness lemma on the symmetric configuration.

\begin{lemma}
Suppose $\hat{K}$ is a $n-1$ dimensional vector and $\left\vert \hat
{K}-K\right\vert +t+\frac{1}{2}<\delta$ for some small $\delta,$ with $\hat
{K}\neq K.$ Then
\[
\Theta_{n}\left(  -z-t,\hat{K}\right)  \neq\tilde{\Theta}_{n}\left(
z-t,\hat{K}\right)  .
\]

\end{lemma}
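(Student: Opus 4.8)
The plan is to show that the polynomial identity
\[
\Theta_n(-z-t,\hat K)=\tilde\Theta_n(z-t,\hat K)
\]
(with $\mu=1$, as fixed in this section) can hold only when $\hat K=K=(-\tfrac12,\dots,-\tfrac12)$ and $t=-\tfrac12$; since $\hat K\neq K$ is assumed, this forces the asserted inequality. The argument will in fact use no smallness of $\delta$.

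First I would bring both sides to the normal form $\Theta_n(z-a,\,\cdot\,)$. Using the relation $\tilde\Theta_n(z,\mu,K)=\mu^n\Theta_n(z-\mu^{-1},\tilde K)$ proved above, at $\mu=1$ the right-hand side is $\Theta_n(z-t-1,\hat K+\mathbf 1)$, where $\mathbf 1=(1,\dots,1)$ and $\tilde K=\hat K+\mathbf 1$. For the left-hand side I would record the reflection identity
\[
\Theta_n(-z,K)=(-1)^{n(n+1)/2}\Theta_n(z,-K),
\]
which is immediate from Lemma \ref{Theta}: each monomial $k_2^{l_2}\cdots k_j^{l_j}z^m$ of $\Theta_n$ has index $(-1)^{l_2+\cdots+l_j+m}=(-1)^{n(n+1)/2}$, and replacing $(z,K)$ by $(-z,-K)$ multiplies that monomial by exactly $(-1)^{l_2+\cdots+l_j+m}$. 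Writing $d=n(n+1)/2$, the identity to analyze becomes
\[
(-1)^{d}\,\Theta_n(z+t,-\hat K)=\Theta_n(z-t-1,\hat K+\mathbf 1).
\]

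If $d$ is odd the two sides have leading coefficients $-1$ and $+1$, so they can never coincide and the lemma is immediate. Assume $d$ is even, so both sides are monic of degree $d$. I would use that $\Theta_n$ is weighted-homogeneous of weight $d$ for the weights $\operatorname{wt}(z)=1$, $\operatorname{wt}(k_j)=2j-1$: from the generating function each $\theta_{2c-1}$ has weight $2c-1$, each derivative lowers the weight by one, and a term of the Wronskian has weight $\sum_{c=1}^n(2c-1)-\sum_{r=1}^n(r-1)=n(n+1)/2$. In particular $\Theta_n$ has no $z^{d-1}$ term, since that would force the $k$-part to have weight $1<3$. Hence $\Theta_n(z-a,K)=z^d-d\,a\,z^{d-1}+\cdots$, and comparing $z^{d-1}$-coefficients in the displayed identity (shift $a_1=-t$ on the left, $a_2=t+1$ on the right) gives $a_1=a_2$, i.e. $t=-\tfrac12$ and common shift $a=\tfrac12$. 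Cancelling the shift reduces everything to
\[
\Theta_n(w,-\hat K)=\Theta_n(w,\hat K+\mathbf 1).
\]

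Finally I would prove that $K\mapsto\Theta_n(\,\cdot\,,K)$ is injective, which yields $-\hat K=\hat K+\mathbf 1$, i.e. $\hat K=(-\tfrac12,\dots,-\tfrac12)=K$, the desired contradiction. Injectivity again follows from the weighted grading: the coefficient of $z^{d-(2j-1)}$ in $\Theta_n$ is weighted-homogeneous of weight $2j-1$ in $K$; its only degree-one monomial is $k_j$, and every weight-$(2j-1)$ monomial of degree $\ge2$ involves only $k_2,\dots,k_{j-1}$. Thus this coefficient equals $c_jk_j+P_j(k_2,\dots,k_{j-1})$, and if $c_j\neq0$ for all $j$ one solves $\Theta_n(\,\cdot\,,-\hat K)=\Theta_n(\,\cdot\,,\hat K+\mathbf 1)$ triangularly for $k_2,k_3,\dots$ in turn. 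The one place that requires real work is precisely the nonvanishing of these leading coefficients $c_j$, equivalently that the linear-in-$k_j$ contribution survives in the Wronskian; I would check this by computing $\partial_{k_j}\Theta_n|_{K=0}$ from the generating function, where $\partial_{k_j}\theta_{2c-1}|_{K=0}=-\tfrac{1}{2j-1}z^{2c-2j}/(2c-2j)!$ for $c\ge j$, and verifying the resulting one-column-modified Wronskian does not vanish. Alternatively, this injectivity is a manifestation of the nondegeneracy recorded in Proposition \ref{sy} and can be deduced from it; the weighted-homogeneity route has the advantage of being self-contained.
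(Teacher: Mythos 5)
Your architecture is genuinely different from the paper's and, up to one step, correct. The paper proves this lemma by induction on $n$: assuming the identity $\Theta_{n}\left(  -z-t,\hat{K}\right)  =\tilde{\Theta}_{n}\left(  z-t,\hat{K}\right)$, it substitutes into the recursion $\left(  \ref{anan}\right)$ linking $\Theta_{n}$ and $\Theta_{n-1}$, reflects $z\mapsto-z$, concludes that the same identity holds at level $n-1$, and then uses the induction hypothesis to pin down $t$ and $k_{2},...,k_{n-1}$, treating the last parameter $k_{n}$ separately. You instead normalize the right-hand side via $\tilde{\Theta}_{n}\left(  z,\mu,K\right)  =\mu^{n}\Theta_{n}\left(  z-\mu^{-1},\tilde{K}\right)$, reflect the left-hand side via the index Lemma \ref{Theta} (the identity $\Theta_{n}\left(  -z,K\right)  =\left(  -1\right)  ^{n\left(  n+1\right)  /2}\Theta_{n}\left(  z,-K\right)$ is a correct consequence of it, and checks out on $\Theta_{2},\Theta_{3}$), eliminate the translation parameter by the absence of a $z^{d-1}$ term, $d=n\left(  n+1\right)  /2$ (a correct consequence of the weighted homogeneity $\mathrm{wt}\left(  z\right)  =1$, $\mathrm{wt}\left(  k_{j}\right)  =2j-1$), and reduce everything to injectivity of $K\mapsto\Theta_{n}\left(  \cdot,K\right)$. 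These reductions are all sound, and your route would even give the conclusion globally, with no smallness assumption on $\delta$.

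The gap is exactly where you flagged it: the nonvanishing of the constants $c_{j}$, i.e. of the linear-in-$k_{j}$ part of the coefficient of $z^{d-\left(  2j-1\right)  }$. This is the entire content of the injectivity step, and \textquotedblleft I would check this\textquotedblright\ is not a proof. Two concrete points. First, the verification is more delicate than your sketch indicates: since $k_{j}$ enters every $\theta_{2c-1}$ with $c\geq j$, the derivative $\partial_{k_{j}}\Theta_{n}|_{K=0}$ is a \emph{sum} of $n-j+1$ one-column-modified Wronskians, not a single one, so one must rule out cancellation among them. It does work: using $W\left(  z^{a_{1}}/a_{1}!,...,z^{a_{n}}/a_{n}!\right)  =\left(  \prod_{i}1/a_{i}!\right)  \prod_{i<i^{\prime}}\left(  a_{i^{\prime}}-a_{i}\right)  z^{\sum a_{i}-n\left(  n-1\right)  /2}$, the term coming from column $c$ is a nonzero multiple of $z^{d-2j+1}$ whose sign is $\left(  -1\right)  ^{j}$, independent of $c$ (the replaced exponent $2c-2j$ sits below exactly $j-1$ of the odd exponents preceding it), so the sum is nonzero; but this computation is precisely what is missing from your proposal. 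Second, your fallback, deducing the injectivity from Proposition \ref{sy}, is circular within the logic of the paper: the proof of Proposition \ref{sy} invokes exactly this lemma, in the form of the assertion that $k_{i}=-\frac{1}{2}$ are the only parameters producing a symmetric configuration, so the lemma cannot in turn rest on that proposition. With the sign computation supplied, your proof is complete, self-contained, and in some respects stronger than the paper's; without it, the crucial step is unproven.
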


\begin{proof}
We prove this by induction. This is true for $n=1.$ Suppose this is true for
$n=k,$ we prove that it is also true for $n=k+1.$ Indeed, suppose to be
contrary that $\Theta_{n}\left(  -z-t,\hat{K}\right)  \neq\tilde{\Theta}%
_{n}\left(  z-t,\hat{K}\right)  .$
\begin{align*}
&  \Theta_{n}^{\prime\prime}\left(  z-t,\hat{K}\right)  \Theta_{n-1}\left(
z-t,\hat{K}\right)  -2\Theta_{n}^{\prime}\left(  z-t,\hat{K}\right)
\Theta_{n-1}^{\prime}\left(  z-t,\hat{K}\right) \\
&  +\Theta_{n}\left(  z-t,\hat{K}\right)  \Theta_{n-1}^{\prime\prime}\left(
z-t,\hat{K}\right) \\
&  =0.
\end{align*}
Replacing $z$ be $-z,$ we get
\begin{align*}
&  \tilde{\Theta}_{n}^{\prime\prime}\left(  z-t,\hat{K}\right)  \Theta
_{n-1}\left(  -z-t,\hat{K}\right)  -2\tilde{\Theta}_{n}^{\prime}\left(
z-t,\hat{K}\right)  \Theta_{n-1}^{\prime}\left(  -z-t,\hat{K}\right) \\
&  +\tilde{\Theta}_{n}\left(  z-t,\hat{K}\right)  \Theta_{n-1}^{\prime\prime
}\left(  -z-t,\hat{K}\right) \\
&  =0.
\end{align*}
This then implies that
\[
\Theta_{n-1}\left(  -z-t,\hat{K}\right)  =\tilde{\Theta}_{n-1}\left(
z-t,\hat{K}\right)  .
\]
Hence $t=-\frac{1}{2},$ and the first $n-2$ components of $\hat{K}$ is
$-\frac{1}{2}.$ It then follows that the last component of $\hat{K}$ is also
$-\frac{1}{2}.$ This finishes the proof.
\end{proof}

Since $\psi_{n}=\frac{B_{n}}{A_{n}}e^{\mu z},$ we have the relation%
\[
\sigma_{n}e^{-\mu z}=-\frac{B_{n}\xi_{n}}{A_{n}^{2}}+\frac{\eta_{n}}{A_{n}}.
\]
Hence the function $\eta_{n}$ is given in terms of $\sigma_{n},\xi_{n}$ by
\begin{equation}
\eta_{n}=A_{n}\sigma_{n}e^{-\mu z}+\frac{B_{n}}{A_{n}}\xi_{n}. \label{yita}%
\end{equation}
Note that $\left(  A_{n},B_{n}\right)  $ satisfies
\[
A_{n}^{\prime\prime}B_{n}-2A_{n}^{\prime}B_{n}^{\prime}+A_{n}B_{n}%
^{\prime\prime}-2\mu\left(  A_{n}^{\prime}B_{n}-A_{n}B_{n}^{\prime}\right)
=0.
\]
Linearizing this equation we get
\begin{align*}
&  \xi_{n}^{\prime\prime}B_{n}-2\xi_{n}^{\prime}B_{n}^{\prime}+\xi_{n}%
B_{n}^{\prime\prime}-2\mu\left(  \xi_{n}^{\prime}B_{n}-\xi_{n}B_{n}^{\prime
}\right) \\
&  +A_{n}^{\prime\prime}\eta_{n}-2A_{n}^{\prime}\eta_{n}^{\prime}+A_{n}%
\eta_{n}^{\prime\prime}-2\mu\left(  A_{n}^{\prime}\eta_{n}-A_{n}\eta
_{n}^{\prime}\right) \\
&  =0.
\end{align*}
In the case $n=0,$ we have $A_{n}=B_{n}=1,$ the above equation reads
\begin{equation}
\xi_{0}^{\prime\prime}-2\mu\xi_{0}^{\prime}+\eta_{0}^{\prime\prime}+2\mu
\eta_{0}^{\prime}=0. \label{xi}%
\end{equation}
That is, $\left(  \xi_{0}+\eta_{0}\right)  ^{\prime}=2\mu\left(  \xi_{0}%
-\eta_{0}\right)  .$ By $\left(  \ref{yita}\right)  ,$%
\[
\eta_{0}=\sigma_{0}e^{-\mu z}+\xi_{0}.
\]
Hence
\begin{align*}
\xi_{0}+\eta_{0}  &  =\sigma_{0}e^{-\mu z}+2\xi_{0},\\
\xi_{0}-\eta_{0}  &  =-\sigma_{0}e^{-\mu z}.
\end{align*}
It follows that
\begin{equation}
\left(  \sigma_{0}e^{-\mu z}+2\xi_{0}\right)  ^{\prime}+2\mu\left(  \sigma
_{0}e^{-\mu z}\right)  =0. \label{sigma}%
\end{equation}

Now suppose the Adler-Moser polynomial $A_{N}$ satisfies assumption (A). Given
functions $\xi_{N}$ and $\eta_{N},$ we have corresponding functions
$f_{N},\sigma_{N}.$ Using $\left(  \ref{s}\right)  ,$ we can define
recursively $\left(  \xi_{N-1},\eta_{N-1},f_{N-1},\sigma_{N-1}\right)
,...,\left(  \xi_{0},\eta_{0},f_{0},\sigma_{0}\right)  .$ Linearizing the
Darboux transformation, we find that $\xi_{0},\eta_{0}$ satisfy $\left(
\ref{xi}\right)  .$

\begin{proposition}
\label{li}Suppose $\xi_{N},\eta_{N}$ are polynomials with degree less than
$N\left(  N+1\right)  /2.$ Then $\xi_{0}=\eta_{0}=0.$
\end{proposition}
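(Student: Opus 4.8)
The plan is to prove, by downward induction on $n$, the strengthened assertion that for every $n=N,N-1,\dots ,0$ the functions $\xi_{n},\eta_{n}$ generated by the recursion are polynomials with $\deg\xi_{n},\deg\eta_{n}<\deg A_{n}=n(n+1)/2$. The base case $n=N$ is the hypothesis of Proposition \ref{li}. Granting this invariant the conclusion is immediate: since $A_{0}=B_{0}=1$ has degree $0$, a polynomial of degree strictly less than $0$ vanishes identically, so $\xi_{0}=\eta_{0}=0$. Thus the whole matter reduces to showing that one descent step of the system $(\ref{s})$ preserves polynomiality and lowers the degree bound from $\deg A_{n}$ to $\deg A_{n-1}$.

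It is convenient to phrase the invariant through the auxiliary functions. Under assumption (A), Lemma \ref{a} shows that $A_{n}$ has only simple roots, and the earlier lemma guaranteeing that $A_{n}$ and $B_{n}=A_{n}(-z)$ have no common root lets me separate their pole contributions. Hence $\deg\xi_{n}<\deg A_{n}$ is equivalent to $f_{n}=(\xi_{n}/A_{n})'$ being a rational function with double poles of zero residue at the roots of $A_{n}$, regular elsewhere, decaying like $O(|z|^{-2})$ at infinity; likewise the joint condition on $\xi_{n},\eta_{n}$ is equivalent to $\sigma_{n}e^{-\mu z}=-B_{n}\xi_{n}/A_{n}^{2}+\eta_{n}/A_{n}$ being rational, with poles only at the roots of $A_{n}$ and decay $O(|z|^{-1})$. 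I would run the induction at the level of these rational functions, recovering $\xi_{n-1}$ from $f_{n-1}$ by integration and $\eta_{n-1}$ from $(\ref{yita})$ at the end of each step.

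For the step I would insert the level-$n$ data into $(\ref{s1})$ and $(\ref{s2})$ (with $n$ replaced by $n-1$) and verify that the same structure reappears one level down. There are two potential sources of failure. First, the homogeneous prefactor $A_{n}^{2}/A_{n-1}^{2}=\phi_{n-1}^{2}$ in $(\ref{s1})$ grows like $|z|^{2n}$, so $f_{n-1}$ acquires spurious polynomial growth unless the accompanying integral $\int_{0}^{z}(A_{n-1}^{2}/A_{n}^{2})\big(f_{n}'+2(\ln(A_{n}/A_{n-1}))'f_{n}\big)\,ds$ tends to $0$ as $|z|\to\infty$. Second, the integrands in both formulas can carry nonzero residues at the roots of $A_{n}$, and in $(\ref{s2})$ the factor $\psi_{n-1}=(B_{n-1}/A_{n-1})e^{\mu z}$ turns such a residue into a transcendental exponential-integral term; either would destroy rationality of $f_{n-1}$ or $\sigma_{n-1}e^{-\mu z}$. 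To control these I would use the Adler--Moser identities, in particular the recursion $(\ref{anan})$ and $(2n+1)A_{n}^{2}=A_{n-1}A_{n+1}'-A_{n-1}'A_{n+1}$, together with the relations $\phi_{n}''+2(\ln A_{n})''\phi_{n}=0$ and $\psi_{n}''+2(\ln A_{n})''\psi_{n}=\mu^{2}\psi_{n}$, to show that the offending residues cancel and that the boundary integral vanishes; the disjointness of the roots of $A_{n}$ and $B_{n}$ keeps all the relevant poles simple enough for this cancellation to be checked directly.

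The main obstacle is exactly the simultaneous vanishing of these boundary and residue obstructions at every level. The mechanism behind it is that the invariant which truly propagates is not the degree bound alone but the linearized Tkachenko relation: the pair $(\xi_{n},\eta_{n})$ solves the linearization of $A_{n}''B_{n}-2A_{n}'B_{n}'+A_{n}B_{n}''-2\mu(A_{n}'B_{n}-A_{n}B_{n}')=0$, and it is precisely this relation that forces the rational integrand appearing in $(\ref{s2})$ to have the form $R'+\mu R$ for a rational $R$, hence to integrate against $e^{\mu z}$ to a rational multiple of $e^{\mu z}$ rather than to an exponential integral. Proving that one descent step carries this linearized relation at level $n$ into the corresponding relation at level $n-1$ — so that it survives all the way down to equation $(\ref{xi})$ at level $0$ — is the technical heart of the argument, and it is here that the simplicity of the roots from assumption (A) is indispensable. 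Once the invariant is in force down to $n=0$, both $\xi_{0}$ and $\eta_{0}$ are polynomials of degree less than $\deg A_{0}=0$ and hence vanish, which is consistent with and can be cross-checked against the level-zero equation $(\ref{xi})$.
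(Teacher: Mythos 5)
Your reduction has the right shape: \emph{if} the invariant ``$\xi_n,\eta_n$ polynomial with $\deg<\deg A_n$'' really propagated down the recursion, then $\deg A_0=0$ would force $\xi_0=\eta_0=0$. But the proposal never proves that propagation, and it is not a deferrable technicality --- it is essentially the whole proposition, and in the naive form you state it, it is not available. In the descent step (\ref{s1}) the integrand $\frac{A_n^2}{A_{n+1}^2}f_{n+1}'$ has poles at the roots of $A_{n+1}$; although $f_{n+1}'$ and $A_n^2/A_{n+1}^2$ each have vanishing residues there, their product in general does not (already $z^{-2}\cdot(z-1)^{-2}$ has nonzero residues at both of its poles), and a nonzero residue makes $\int_0^z$ produce logarithms, so $f_n$ is then not even single-valued, much less of the form $(\mathrm{polynomial}/A_n)'$. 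The paper states this explicitly: the $f_n$ produced by the descent may have poles at the roots of \emph{all} of $A_n,A_{n+1},\dots,A_N$ and may be multiple-valued. Your proposed rescue --- that the linearized Tkachenko relation propagates and forces the integrand in (\ref{s2}) into the form $R'+\mu R$ --- is asserted, not proved; moreover Proposition \ref{li} assumes nothing about $(\xi_N,\eta_N)$ beyond polynomiality and degree, while the linearized Tkachenko relation is an extra differential identity that arbitrary polynomial data will not satisfy, so your strengthened inductive hypothesis does not even have a base case as stated.

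The paper's proof runs in the opposite direction and never attempts a level-by-level invariant. It lets the extra poles and the multivaluedness propagate freely down the chain, and instead uses one \emph{global} constraint: the identity (\ref{f01}), a rewriting of the level-zero relation (\ref{sigma}), which holds by the construction of the descent. Given a hypothetical singularity $d_0$ of $f_n$ that is not a root of $A_n$, the paper compares orders of singularity --- by (\ref{fnn}), $f_n$ and $f_{n+1}$ have essentially the same strength of singularity at $d_0$, while $\sigma_n$ is milder than $\sigma_{n+1}$ --- and derives a contradiction with (\ref{f01}). This shows $f_1$ is single-valued with its only pole at $z=1$, hence $f_1=c_1(z-1)^{-1}+c_2(z-1)^{-2}$; substituting back into (\ref{f01}) gives $c_1=0$ and then $f_0=\sigma_0=0$. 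In your plan this level-zero identity is relegated to a final consistency cross-check, but it is in fact the engine of the entire argument: without it (or a proved substitute for your residue-cancellation claim) the induction cannot close, and the proposal as written does not constitute a proof.
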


\begin{proof}
We first consider the case that for any $n\neq j\leq N,$ $A_{n}$ and $A_{j}$
have no common roots. (This assumption is true for $N=34$, as can be verified by
Maple.)  The idea for the general case is similar but notations are more involved.

Since $f_{N}=\left(  \frac{\xi_{N}}{A_{N}}\right)  ^{\prime}$, $f_{N}$ is a
rational function with possible poles at the roots of $A_{N}.$ We know that
for each $n\leq N-1,$ $f_{n}$ and $f_{n+1}$ are related by
\begin{equation}
f_{n}=-f_{n+1}+2\frac{A_{n+1}^{2}}{A_{n}^{2}}\int_{0}^{z}\frac{A_{n}^{2}%
}{A_{n+1}^{2}}f_{n+1}^{\prime}ds. \label{fn}%
\end{equation}
Hence $f_{n}$ has possible poles at the roots of $A_{n},A_{n+1},...,A_{N}.$ We
remark that as a complex valued function with poles, $f_{n}$ may be
multiple-valued. By $\left(  \ref{fn}\right)  ,$
\begin{align}
f_{n}-f_{n+1}  &  =-2f_{n+1}+2\frac{A_{n+1}^{2}}{A_{n}^{2}}\int_{0}^{z}%
\frac{A_{n}^{2}}{A_{n+1}^{2}}f_{n+1}^{\prime}ds\nonumber\\
&  =-2\frac{A_{n+1}^{2}}{A_{n}^{2}}\int_{0}^{z}f_{n+1}\left(  \frac{A_{n}^{2}%
}{A_{n+1}^{2}}\right)  ^{\prime}ds. \label{fnn}%
\end{align}
In particular,
\begin{equation}
f_{0}=f_{1}+4\left(  z-1\right)  ^{2}\int_{0}^{z}\frac{f_{1}}{\left(
s-1\right)  ^{3}}ds. \label{ff}%
\end{equation}
On the other hand,
\[
\sigma_{n}=\phi_{n}\int_{0}^{z}\phi_{n}^{-1}\left(  \psi_{n}\left(
f_{n+1}-f_{n}\right)  -\sigma_{n+1}\right)  ds.
\]
Recall that $\phi_{0}=z-1,\psi_{0}=1.$ Hence
\begin{align*}
\sigma_{0}  &  =\left(  z-1\right)  \int_{0}^{z}\frac{1}{z-1}e^{\mu z}\left(
f_{1}-f_{0}-\sigma_{1}\right)  ds\\
&  =\frac{A_{n+1}}{A_{n}}\int_{0}^{z}\frac{B_{n}}{A_{n+1}}e^{\mu s}\left(
f_{n+1}-f_{n}\right)  ds-\frac{A_{n+1}}{A_{n}}\int_{0}^{z}\frac{A_{n}}%
{A_{n+1}}\sigma_{n+1}ds.
\end{align*}
By $\left(  \ref{sigma}\right)  ,$%
\[
\left(  \sigma_{0}e^{\mu z}\right)  ^{\prime}+2e^{2\mu z}f_{0}=0.
\]
Hence using the fact that $\mu=1,$ we obtain
\begin{align}
&  \left(  \sigma_{0}e^{\mu z}\right)  ^{\prime}+2e^{2\mu z}f_{0}\nonumber\\
&  =e^{2\mu z}\left(  f_{0}+f_{1}-\sigma_{1}\right) \nonumber\\
&  +ze^{\mu z}\int_{0}^{z}\frac{1}{s-1}e^{\mu s}\left(  f_{1}-f_{0}-\sigma
_{1}\right)  ds\nonumber\\
&  =0. \label{f01}%
\end{align}

Our next aim is to show that $f_{1}$ has no singularity except the point
$z=1.$ Let us consider the term $\frac{A_{n+1}}{A_{n}}\int_{0}^{z}\frac{A_{n}%
}{A_{n+1}}\sigma_{n+1}ds.$ Let $z=d_{0}$ be a singularity of $f_{n}$ which is
not the root of $A_{n}.$ Then loosely speaking, the degree of singularity
$\sigma_{n}$ is smaller than that of $\sigma_{n+1}$ and $f_{n}.$ By $\left(
\ref{fnn}\right)  ,$ $f_{n}$ and $f_{n+1}$ has essentially the same degree of
singularity at $d_{0}.$ But this contradicts with the identity $\left(
\ref{f01}\right)  .$ Hence $f_{1}$ can only have singularity at $z=1.$

Now we show that $f_{0}=0.$ To see this, we observe that since $f_{1}$ has no
other singularities, by the recursive relation, we deduce that $f_{1}$ is
actually single valued and $f_{1}=c_{1}\frac{1}{z-1}+c_{2}\frac{1}{\left(
z-1\right)  ^{2}},$ and
\[
\sigma_{1}=\phi_{1}\int_{0}^{z}\phi_{1}^{-1}\left(  \psi_{1}\left(
f_{2}-f_{1}\right)  -\sigma_{2}\right)  ds.
\]
Putting this into $\left(  \ref{f01}\right)  ,$ we find that $c_{1}=0.$ Hence
$f_{0}=0$ and $\sigma_{0}=0.$
\end{proof}

Now we can prove Proposition \ref{sy}. By Proposition \ref{li}, the kernel of
the map $DF|_{\left(  a,b\right)  }$ is given by linear combination of
$\varpi_{i}.$ For $\mu=1,$ $k_{i}=-\frac{1}{2}$ are the only parameters for
which $\Theta_{n}$ and $\tilde{\Theta}_{n}$ give arise to symmetric
configuration. Hence the configuration is nondegenerate.

\setcounter{equation}{0}
\section{\label{Pri}Preliminaries on the Ginzburg-Landau equation}

In this section, we recall some results on the Ginzburg-Landau equation. Most
of the materials in this section can be found in \cite{Pacard} (possibly with
different notations though).

Stationary solutions of the GP equation $\left(  \ref{GP}\right)  $ solve the
following Ginzburg-Landau equation
\begin{equation}
-\Delta\Phi=\Phi\left(  1-\left\vert \Phi\right\vert ^{2}\right)  \text{ in
}\mathbb{R}^{2}, \label{GL}%
\end{equation}
where $\Phi$ is a complex valued function.  As we mentioned before, it has
degree $\pm d$ vortices of the form $S_{d}\left(  r\right)  e^{\pm id\theta}.$
The asymptotic behavior of $S_{d}$ can be described. It is known that as
$r\rightarrow+\infty,$%
\begin{equation}
S_{d}\left(  r\right)  =1-\frac{d^{2}}{2r^{2}}+O\left(  r^{-4}\right)  .
\label{asy}%
\end{equation}
On the other hand, as $r\rightarrow0,$ there is a constant $\kappa=\kappa
_{d}>0$ such that
\begin{equation}
S_{d}\left(  r\right)  =\kappa r\left(  1-\frac{r^{2}}{8}+O\left(
r^{4}\right)  \right)  . \label{near 0}%
\end{equation}
See \cite{Fife} for the proof of these facts.

Let $\varepsilon>0$ be small. For technical reasons, we need to modify $S$ in
the region $r>C_{0}\varepsilon^{-1},$ where $C_{0}$ is a fixed large constant,
such that $S\left(  r\right)  =1$ for $r>C_{0}\varepsilon^{-1}+1.$ We still
denote it by $S$ for notational simplicity.

The linearized operator of the Ginzburg-Landau equation around $v_{+}$ will be
denoted by $L:$
\[
\eta\rightarrow\Delta\eta+\left(  1-\left\vert v_{+}\right\vert ^{2}\right)
\eta-2v_{+}\operatorname{Re}\left(  \eta\bar{v}_{+}\right)  .
\]
It turns out to be more convenient to study the operator
\[
\mathcal{L}\eta:=e^{-i\theta}L\left(  e^{i\theta}\eta\right)  .
\]
If we write the complex function $\eta$ as $w_{1}+iw_{2}$ with $w_{1},w_{2}$
being real valued functions, then explicitly
\begin{align*}
\mathcal{L}\eta &  =e^{-i\theta}\Delta\left(  e^{i\theta}\eta\right)  +\left(
1-S^{2}\right)  \eta-2S^{2}w_{1}\\
&  =\Delta w_{1}+\left(  1-3S^{2}\right)  w_{1}-\frac{1}{r^{2}}w_{1}-\frac
{2}{r^{2}}\partial_{\theta}w_{2}\\
&  +i\left(  \Delta w_{2}+\left(  1-S^{2}\right)  w_{2}-\frac{1}{r^{2}}%
w_{2}+\frac{2}{r^{2}}\partial_{\theta}w_{1}\right)  .
\end{align*}
Invariance of the equation $\left(  \ref{GL}\right)  $ under rotation and
translation gives us three linearly independent kernels of the operator
$\mathcal{L}$, called Jacobi fields. Rotational invariance yields the
solution
\begin{equation}
\Phi^{0}:=ie^{-i\theta}v^{+}=iS, \label{f}%
\end{equation}
while the translational invariance along $x$ and $y$ direction leads to the
solutions
\begin{align*}
\Phi^{+1}  &  :=S^{\prime}\cos\theta-\frac{S}{r}\sin\theta,\\
\Phi^{-1}  &  :=S^{\prime}\sin\theta+\frac{S}{r}\cos\theta.
\end{align*}
Note that these kernels are bounded but decay slowly at infinity, hence not in
$L^{2}\left( {\mathbb R}^{2}\right)  .$ As a consequence, the analysis of the mapping
property of $\mathcal{L}$ is quite delicate. An important fact is that $v_{+}$
is nondegenerate in the sense of all the bounded solutions of $\mathcal{L}%
\eta=0$ are given by linear combinations of $\Phi^{0}$ and $\Phi^{+},\Phi^{-}$. (See [Theorem 3.2, \cite{Pacard}]. Another proof can be found in \cite{DFK}.) Similar results hold for the degree $-1$
vortex $v_{-}.$ It is worth mentioning that the nondegeneracy of those higher
degree vortices $e^{id\theta}S_{d}\left(  r\right)  ,$ $\left\vert
d\right\vert >1,$ is still an open problem. Actually this is the main reason
why we only deal with the degree $\pm1$ vortices in this paper. 

\medskip

The analysis of the asymptotic behavior of the kernels of $\mathcal{L}$ near
$0$ and $\infty$ is crucial in understanding the mapping property of the
linearized operator $\mathcal{L}$. In doing this, the main strategy is to
decompose the kernel into different Fourier modes. Let us now briefly describe
the results in the sequel. Lemma \ref{n=0}, Lemma \ref{n=1} and Lemma
\ref{n=2} below can be found in Section 3.3 of \cite{Pacard}.

We start the discussion with the lowest Fourier mode, which is the simplest case.

\begin{lemma}
\label{n=0}Suppose $a$ is a complex valued solution of the equation
$\mathcal{L}a=0,$ depending only on $r.$ \newline(I) As $r\rightarrow0,$
either $\left\vert a\right\vert $ blows up at least like $r^{-1},$ or $a$ can
be written as a linear combination of two linearly independent solutions
$w_{0,1},w_{0,2},$ with
\begin{align*}
w_{0,1}\left(  r\right)   &  =r\left(  1+O\left(  r^{2}\right)  \right)  ,\\
w_{0,2}\left(  r\right)   &  =ir\left(  1+O\left(  r^{2}\right)  \right)  .
\end{align*}
\newline(II) As $r\rightarrow+\infty,$ if $a$ is an imaginary valued function,
then $a=c_{1}+c_{2}\ln r+O\left(  r^{-2}\right)  ;$ if $a$ is real valued,
then it either blows up or decays exponentially.
\end{lemma}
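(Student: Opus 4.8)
The plan is to exploit the fact that in the lowest Fourier mode the operator $\mathcal{L}$ decouples into two scalar, real, second-order ODEs. Writing $a = w_1 + i w_2$ with $w_1,w_2$ real and radial, the $\partial_\theta$ terms in the formula for $\mathcal{L}$ drop out, so $\mathcal{L}a = 0$ is equivalent to the pair
\begin{align*}
w_1'' + \frac{1}{r}w_1' + \left(1 - 3S^2 - \frac{1}{r^2}\right)w_1 &= 0,\\
w_2'' + \frac{1}{r}w_2' + \left(1 - S^2 - \frac{1}{r^2}\right)w_2 &= 0.
\end{align*}
Thus the real and imaginary parts evolve independently, and it suffices to analyze each scalar equation near $0$ and near $+\infty$ separately.

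For part (I) I would treat $r=0$ as a regular singular point. Since $S(r)=\kappa r\left(1 - r^2/8 + O(r^4)\right)$, the potentials $1-3S^2-r^{-2}$ and $1-S^2-r^{-2}$ both have the form $-r^{-2}+(\text{analytic in }r^2)$. Substituting $w\sim r^s$ into the leading part $w''+r^{-1}w'-r^{-2}w=0$ gives the indicial equation $s^2-1=0$, with roots $s=\pm1$. The root $s=1$ yields, by the Frobenius method together with the evenness of the coefficients in $r$, a genuine solution of the form $r\left(1+O(r^2)\right)$ for each equation: the bounded solution of the $w_1$ equation is $w_{0,1}$, while $i$ times the bounded solution of the $w_2$ equation is $w_{0,2}$. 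The second branch $s=-1$ contributes a solution blowing up at least like $r^{-1}$. Hence if $|a|$ does not blow up like $r^{-1}$, then neither $w_1$ nor $w_2$ contains the $s=-1$ branch, so $a$ is a linear combination of $w_{0,1}$ and $w_{0,2}$, as claimed.

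For part (II) I would use the asymptotics $S(r)=1-\frac{1}{2r^2}+O(r^{-4})$, which give $1-S^2=r^{-2}+O(r^{-4})$ and $1-3S^2=-2+O(r^{-2})$. For the real part, the $w_1$ equation becomes, to leading order, $w_1''+r^{-1}w_1'-2w_1=0$, a modified-Bessel-type equation whose independent solutions grow or decay like $e^{\pm\sqrt{2}\,r}/\sqrt{r}$; hence a real-valued $a$ either blows up or decays exponentially. For the imaginary part the crucial observation is the cancellation $1-S^2-r^{-2}=O(r^{-4})$, so the $w_2$ equation reduces to $\left(r w_2'\right)'=O(r^{-3})w_2$. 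Integrating twice against the fundamental solutions $1$ and $\ln r$ yields $w_2=c_1+c_2\ln r+O(r^{-2})$, which is exactly the stated form for imaginary-valued $a$.

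The main obstacle is making these asymptotic claims rigorous rather than merely formal. Near $r=0$ one must check, via the Frobenius recursion, that the bounded branch carries no logarithmic contamination despite the indicial roots differing by the integer $2$; and at infinity one must upgrade the leading-order ODEs to genuine asymptotics with the sharp remainders $O(r^2)$ and $O(r^{-2})$. The cleanest route is to invoke the standard asymptotic-integration theory for linear ODEs (a Levinson-type theorem, or variation of parameters against the leading-order solutions), which converts each perturbative estimate on the potential into the corresponding controlled error term. These steps are routine but must be executed with care to obtain the precise exponents quoted in the statement.
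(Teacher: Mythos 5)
Your proposal follows essentially the same route as the paper's own proof: writing $a=w_1+iw_2$ so that $\mathcal{L}a=0$ decouples into the two real radial ODEs with potentials $1-3S^2-r^{-2}$ and $1-S^2-r^{-2}$, then reading off the behavior at $0$ and at $\infty$ from the expansions $S(r)=\kappa r\left(1-r^2/8+O(r^4)\right)$ and $S(r)=1-\frac{1}{2r^2}+O(r^{-4})$ via a perturbation argument. In fact you supply more detail than the paper (Frobenius indicial roots $s=\pm 1$ at the origin, the modified-Bessel behavior $e^{\pm\sqrt{2}r}/\sqrt{r}$ and the cancellation $1-S^2-r^{-2}=O(r^{-4})$ at infinity), where the paper simply asserts that the conclusions "follow from a perturbation argument," so the proposal is correct and consistent with the intended proof.
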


\begin{proof}
We sketch the proof for completeness.

If $\mathcal{L}a=0$ and the complex function $a$ depends only on $r,$ then $a$
will satisfy
\begin{equation}
a^{\prime\prime}+\frac{1}{r}a^{\prime}-\frac{1}{r^{2}}a=S^{2}\bar{a}-\left(
1-2S^{2}\right)  a. \label{F0}%
\end{equation}
Note that this equation is not complex linear and its solution space is a
$4$-dimensional real vector space. The Jacobi field $\Phi^{0}$ defined by
$\left(  \ref{f}\right)  $ is a purely imaginary solution of $\left(
\ref{F0}\right)  .$ Writing $a=a_{1}+a_{2}i,$ where $a_{i}$ are real valued
functions, we get from $\left(  \ref{F0}\right)  $ two \textit{decoupled}
equations:%
\[
a_{1}^{\prime\prime}+\frac{1}{r}a_{1}^{\prime}-\frac{1}{r^{2}}a_{1}+\left(
1-3S^{2}\right)  a_{1}=0,
\]%
\begin{equation}
a_{2}^{\prime\prime}+\frac{1}{r}a_{2}^{\prime}-\frac{1}{r^{2}}a_{2}+\left(
1-S^{2}\right)  a_{2}=0. \label{Ia}%
\end{equation}
Observe that due to $\left(  \ref{asy}\right)  $, as $r\rightarrow+\infty,$
\begin{align*}
1-3S^{2}-r^{-2}  &  =-2+O\left(  r^{-2}\right)  ,\\
1-S^{2}-r^{-2}  &  =O\left(  r^{-4}\right)  .
\end{align*}
While due to $\left(  \ref{near 0}\right)  ,$ as $r\rightarrow0,$
$1-S^{2}=1+O\left(  r^{2}\right)  .$ The results of this lemma then follow
from a perturbation argument.
\end{proof}

For each integer $n\geq1,$ we consider kernels of $\mathcal{L}$ the form
$a\left(  r\right)  e^{in\theta}+b\left(  r\right)  e^{-in\theta}.$ The
complex valued functions $a,b$ will satisfy the following coupled ODE system
in $\left(  0,+\infty\right)  :$%
\begin{equation}
\left\{
\begin{array}
[c]{c}%
a^{\prime\prime}+\frac{1}{r}a^{\prime}-\frac{\left(  n+1\right)  ^{2}}{r^{2}%
}a=S^{2}\bar{b}-\left(  1-2S^{2}\right)  a\\
b^{\prime\prime}+\frac{1}{r}b^{\prime}-\frac{\left(  n-1\right)  ^{2}}{r^{2}%
}b=S^{2}\bar{a}-\left(  1-2S^{2}\right)  b.
\end{array}
\right.  \label{sys}%
\end{equation}
By analyzing this coupled ODE system, one gets the precise asymptotic behavior
of its solutions. The next lemma deals with the $n=1$ case.

\begin{lemma}
\label{n=1}Suppose $w=a\left(  r\right)  e^{in\theta}+b\left(  r\right)
e^{-in\theta}$ solves $\mathcal{L}w=0.$ \newline(I) As $r\rightarrow0,$ either
$\left\vert w\right\vert $ blows up at least like $-\ln r,$ or $w$ can be
written as a linear combination of $4$ linearly independent solutions
$w_{1,i},i=1,...,4$, satisfying: As $r\rightarrow0,$
\begin{align*}
w_{1,1}  &  =r^{2}\left(  1+O\left(  r^{2}\right)  \right)  e^{i\theta
}+O\left(  r^{6}\right)  e^{-i\theta},\\
w_{1,2}  &  =ir^{2}\left(  1+O\left(  r^{2}\right)  \right)  e^{i\theta
}+O\left(  r^{6}\right)  e^{-i\theta},\\
w_{1,3}  &  =\left(  1+O\left(  r^{2}\right)  \right)  e^{-i\theta}+O\left(
r^{4}\right)  e^{i\theta},\\
w_{1,4}  &  =i\left(  1+O\left(  r^{2}\right)  \right)  e^{-i\theta}+O\left(
r^{4}\right)  e^{i\theta}.
\end{align*}
\newline(II) As $r\rightarrow+\infty,$ either $\left\vert w\right\vert $ is
unbounded(blows up exponentially or like $r$), or $\left\vert w\right\vert $
decays to zero(exponentially or like $r^{-1}$).
\end{lemma}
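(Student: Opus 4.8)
The plan is to analyze the coupled system $\left(\ref{sys}\right)$ with $n=1$ separately near $r=0$ and near $r=+\infty$, treating each as a perturbation of an explicitly solvable model equation, exactly as in the proof of Lemma \ref{n=0}. For $n=1$ the system reads
\[
a''+\frac{1}{r}a'-\frac{4}{r^{2}}a=S^{2}\bar{b}-\left(1-2S^{2}\right)a,\qquad b''+\frac{1}{r}b'=S^{2}\bar{a}-\left(1-2S^{2}\right)b,
\]
which, after splitting $a,b$ into real and imaginary parts, is a real $8$-dimensional solution space. The four regular solutions $w_{1,i}$ account for half of it, the remaining four being the singular branches.

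Near $r=0$, I would use $S^{2}=O\left(r^{2}\right)$ from $\left(\ref{near 0}\right)$, so that both the coupling terms $S^{2}\bar{a},S^{2}\bar{b}$ and the zeroth-order terms $\left(1-2S^{2}\right)a,\left(1-2S^{2}\right)b$ are dominated near the origin by the Euler parts $\partial_{rr}+r^{-1}\partial_{r}-4r^{-2}$ and $\partial_{rr}+r^{-1}\partial_{r}$, whose indicial exponents are $\pm2$ for $a$ and the double root $0$ for $b$. A contraction-mapping argument in suitable weighted spaces then produces the four regular solutions: choosing the admissible exponent $a\sim r^{2}$ forces, through $S^{2}\bar{a}=O\left(r^{4}\right)$, a particular $b=O\left(r^{6}\right)$, which gives $w_{1,1},w_{1,2}$; choosing $b\sim\text{const}$ forces, through $S^{2}\bar{b}=O\left(r^{2}\right)$, a particular $a=O\left(r^{4}\right)$, which gives $w_{1,3},w_{1,4}$. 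Any solution outside their span necessarily contains the singular exponent $r^{-2}$ in $a$ or the logarithm in $b$, so $\left\vert w\right\vert$ blows up at least like $-\ln r$, the milder of the two singular rates.

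Near $r=+\infty$, I would insert $S^{2}=1-r^{-2}+O\left(r^{-4}\right)$ from $\left(\ref{asy}\right)$ with $d=1$, so the system becomes a constant-coefficient system perturbed by the long-range terms $r^{-1}\partial_{r}$ and $r^{-2}$. Writing $a=a_{1}+ia_{2}$, $b=b_{1}+ib_{2}$ and passing to sum/difference combinations, the limiting system decouples into $\left(a_{1}+b_{1}\right)''\approx2\left(a_{1}+b_{1}\right)$ and $\left(a_{2}-b_{2}\right)''\approx2\left(a_{2}-b_{2}\right)$, producing the exponential modes $e^{\pm\sqrt{2}\,r}$, together with algebraic sectors $a_{1}-b_{1}$ and $a_{2}+b_{2}$ governed at leading order by $\partial_{rr}+r^{-1}\partial_{r}$. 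Reinstating the $O\left(r^{-2}\right)$ potential and solving by variation of parameters upgrades these algebraic modes to the precise rates $r$ (unbounded) and $r^{-1}$ (decaying to zero), so every solution is asymptotic to a combination of exponentially growing, exponentially decaying, linearly growing, and $r^{-1}$-decaying modes, which yields the stated dichotomy.

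The main obstacle is the analysis at infinity. The correction $1-S^{2}=O\left(r^{-2}\right)$ is long-range, so it genuinely shifts the algebraic decay rates and cannot be discarded by a crude comparison; moreover the mismatch between the $-4r^{-2}$ coefficient in the $a$-equation and the absence of such a term in the $b$-equation obstructs a clean decoupling of the algebraic sector, forcing a perturbative treatment and, in particular, a verification that no purely bounded, non-decaying mode survives. By contrast, the behavior near $r=0$ is routine once the correct weighted spaces are fixed.
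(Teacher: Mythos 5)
Your proposal is correct, but be aware that the paper does not actually prove this lemma: it derives the coupled system (\ref{sys}), states Lemmas \ref{n=0}--\ref{n=2}, and refers for their proofs to Section 3.3 of \cite{Pacard}, sketching an argument only for the decoupled case $n=0$. What you wrote is the standard ODE analysis that this citation stands for, carried out in the same perturbative spirit as the paper's sketch of Lemma \ref{n=0}: near the origin, your indicial exponents ($\pm 2$ for $a$, the double root $0$ for $b$) together with the bookkeeping of the induced particular solutions ($b=O(r^{6})$ forced by $a\sim r^{2}$, and $a=O(r^{4})$ forced by $b\sim 1$) reproduce exactly the four regular solutions and the $-\ln r$ blow-up alternative; at infinity, your sum/difference decoupling into massive modes $a_{1}+b_{1}$, $a_{2}-b_{2}$ and algebraic modes $a_{1}-b_{1}$, $a_{2}+b_{2}$ is the correct structure. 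The one point you flag as the main obstacle --- that the long-range $O(r^{-2})$ terms produce the exact exponents $\pm 1$ and leave no bounded, non-decaying mode --- closes with a short computation rather than a genuinely delicate argument: writing $S^{2}=1-r^{-2}+O(r^{-4})$ and subtracting the real parts of the two equations in (\ref{sys}), one finds for $w=a_{1}-b_{1}$ and $s=a_{1}+b_{1}$
\[
w^{\prime\prime}+\frac{1}{r}w^{\prime}-\frac{1}{r^{2}}w=\frac{2}{r^{2}}s+O\left(  r^{-4}\right)  \left(  \left\vert w\right\vert +\left\vert s\right\vert \right)  ,
\]
that is, the mismatched $-4r^{-2}$ coefficient of the $a$-equation, the absent centrifugal term of the $b$-equation, and the $r^{-2}$ corrections coming from $S^{2}$ combine into the single diagonal coefficient $-r^{-2}$, whose indicial equation $m^{2}-1=0$ gives precisely the rates $r$ and $r^{-1}$; the residual coupling $2r^{-2}s$ to the exponential sector is harmless, being exponentially small on decaying solutions and dominated on exponentially growing ones, and the identical computation handles $a_{2}+b_{2}$. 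With that, a Levinson-type (or contraction) argument upgrades the formal modes to genuine asymptotics and yields the full dichotomy in (II).
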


For the $n\geq2$ case, we have the following

\begin{lemma}
\label{n=2}Suppose $w=a\left(  r\right)  e^{i\theta}+b\left(  r\right)
e^{-i\theta}$ solves $\mathcal{L}w=0.$ \newline(I) As $r\rightarrow0,$ either
$\left\vert w\right\vert $ blows up at least like $r^{1-n},$ or $w$ can be
written as a linear combination of $4$ linearly independent solutions
$w_{1,i},i=1,...,4$, satisfying: As $r\rightarrow0,$
\begin{align*}
w_{n,1}  &  =r^{n+1}\left(  1+O\left(  r^{2}\right)  \right)  e^{in\theta
}+O\left(  r^{n+5}\right)  e^{-i\theta},\\
w_{n,2}  &  =ir^{n+1}\left(  1+O\left(  r^{2}\right)  \right)  e^{in\theta
}+O\left(  r^{n+5}\right)  e^{-i\theta},\\
w_{n,3}  &  =r^{n-1}\left(  1+O\left(  r^{2}\right)  \right)  e^{-in\theta
}+O\left(  r^{n+3}\right)  e^{in\theta},\\
w_{n,4}  &  =ir^{n-1}\left(  1+O\left(  r^{2}\right)  \right)  e^{-in\theta
}+O\left(  r^{n+3}\right)  e^{in\theta}.
\end{align*}
\newline(II) As $r\rightarrow+\infty,$ either $\left\vert w\right\vert $ is
unbounded(blows up exponentially or like $r^{n}$), or $\left\vert w\right\vert
$ decays to zero(exponentially or like $r^{-n}$).
\end{lemma}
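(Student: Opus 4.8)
The plan is to reduce everything to the coupled radial system $\left(\ref{sys}\right)$ for the profiles $a,b$ and to study the two singular points $r=0$ and $r=+\infty$ separately, in the spirit of the proof of Lemma \ref{n=0} and in parallel with Lemma \ref{n=1}. Writing $a=a_1+ia_2$ and $b=b_1+ib_2$ with $a_i,b_i$ real, the conjugations in $\left(\ref{sys}\right)$ turn it into a real, $4$-component, second order linear system, whose $8$-dimensional solution space I will track through its independent asymptotic behaviors.

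For part (I) I would use $\left(\ref{near 0}\right)$, which gives $S^2=\kappa^2 r^2(1+O(r^2))=O(r^2)$ near the origin. Hence the right-hand sides of $\left(\ref{sys}\right)$ are, to leading order, $-a$ and $-b$ plus an $O(r^2)$ perturbation, and the equations decouple into Bessel equations of orders $n+1$ and $n-1$, namely $a''+\frac1r a'+\bigl(1-\frac{(n+1)^2}{r^2}\bigr)a=O(r^2)$ and the analogue for $b$ with $(n-1)^2$. The indicial roots are $\pm(n+1)$ for $a$ and $\pm(n-1)$ for $b$. Keeping only the regular exponents yields a two-real-parameter family with $a\sim r^{n+1}$ and a two-real-parameter family with $b\sim r^{n-1}$, i.e. the four solutions $w_{n,1},\dots,w_{n,4}$, the factors $1$ and $i$ distinguishing real from imaginary leading coefficients. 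The subdominant components are forced by the coupling: since $S^2\bar b\sim r^2\cdot r^{n-1}=r^{n+1}$, solving $a''+\frac1r a'-\frac{(n+1)^2}{r^2}a=O(r^{n+1})$ produces an $a$-component of size $r^{n+3}$ (the balance giving a factor $\frac1{4(n+2)}$), which is the $O(r^{n+3})$ in $w_{n,3},w_{n,4}$; likewise $S^2\bar a\sim r^{n+3}$ forces a $b$-component of size $r^{n+5}$ as in $w_{n,1},w_{n,2}$. Any solution not in this span must excite a singular exponent, the least singular being $r^{-(n-1)}=r^{1-n}$, which gives the dichotomy in (I).

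For part (II), as $r\to+\infty$, I would use $\left(\ref{asy}\right)$ in the sharper form $S^2=1-r^{-2}+O(r^{-4})$ and $1-2S^2=-1+2r^{-2}+O(r^{-4})$. Splitting $\left(\ref{sys}\right)$ into real and imaginary parts, the zeroth-order coupling is governed by the matrix $\bigl(\begin{smallmatrix}1&1\\1&1\end{smallmatrix}\bigr)$, with eigenvalues $2$ and $0$. The eigenvalue-$2$ combinations ($a_1+b_1$ and $a_2-b_2$) satisfy $u''+\frac1r u'-2u\approx0$, giving the exponential modes $e^{\pm\sqrt2\,r}$. The eigenvalue-$0$ combinations ($v=a_1-b_1$ and $a_2+b_2$) are the delicate ones: after inserting the $O(r^{-2})$ corrections of $S^2$ and $1-2S^2$ and eliminating the other combination (which is two orders smaller), the residual $1/r^2$ potential collapses to exactly $-n^2/r^2$, so $v''+\frac1r v'-\frac{n^2}{r^2}v\approx0$, whose solutions are $r^{\pm n}$. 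Reading off the four rates $e^{\pm\sqrt2\,r}$ and $r^{\pm n}$ yields the dichotomy in (II).

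The main obstacle is precisely this algebraic rate in part (II): if one naively drops all $O(r^{-2})$ terms the eigenvalue-$0$ equation degenerates and the rate is lost, while retaining only the leading $1/r^2$ coefficients $(n\pm1)^2$ gives the wrong exponent $\sqrt{n^2+1}$. One must keep the $2r^{-2}a$ and $r^{-2}\bar b$ contributions coming from the second-order tail of $S$ in $\left(\ref{asy}\right)$ and verify that, in the eigenvalue-$0$ subspace, they conspire to produce the coefficient $n^2$. Converting this formal indicial and matching analysis into genuine solutions with exactly the stated asymptotics — both the existence of the regular family at $0$ and the spanning of the full solution space at $\infty$ — is then carried out by a contraction/variation-of-constants argument on the perturbation terms, exactly as in the perturbative conclusion of Lemma \ref{n=0}.
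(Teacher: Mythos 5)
Your proposal is correct, and in fact it supplies more than the paper does: the paper gives no proof of this lemma at all, simply citing Section 3.3 of the Pacard--Rivi\`ere monograph for Lemmas \ref{n=0}, \ref{n=1} and \ref{n=2}, and only sketching the decoupled $n=0$ case. Your argument is the natural extension of that sketch to the coupled system $\left(\ref{sys}\right)$, and it handles correctly the two points where the coupled case genuinely differs from $n=0$. Near the origin, the indicial exponents $\pm(n+1)$, $\pm(n-1)$ and the forced subdominant components are right, including the nonresonance check implicit in your constant $\frac{1}{4(n+2)}$: applying the Bessel-type operator to $r^{n+3}$ produces $\left[(n+3)^2-(n+1)^2\right]r^{n+1}=4(n+2)r^{n+1}$, so the particular solution exists at the stated order, and the least singular excluded mode is indeed $r^{1-n}$. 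Near infinity, your diagonalization is the key step absent from the $n=0$ case: with $S^2=1-r^{-2}+O(r^{-4})$, the combinations $a_1+b_1$ and $a_2-b_2$ see the mass $2$ (exponential modes), while for $v=a_1-b_1$ (and likewise $a_2+b_2$) the inherited centrifugal term $-\frac{n^2+1}{r^2}v$ combines with the $-r^{-2}v$ coming from the tail of $S^2$ to give exactly $-\frac{n^2}{r^2}v$, hence $r^{\pm n}$; dropping the tail would indeed give the wrong exponent $\sqrt{n^2+1}$. The cross-coupling $\frac{2n}{r^2}u$ is, as you say, two orders lower in the algebraic regime and exponentially small otherwise, so the variation-of-constants/contraction step closes. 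The only cosmetic caveat is that the paper's statement contains typos ($e^{\pm i\theta}$ for $e^{\pm in\theta}$, and $w_{1,i}$ for $w_{n,i}$); your reading of the intended statement is the correct one.
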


By Lemma \ref{n=2}, for $n\geq3,$ if $\mathcal{L}w=0$ and $w$ is bounded near
$0,$ then decays at least like $r^{2}$ as $r\rightarrow0,$ hence decaying
faster than the vortex solution itself. For $n\leq2,$ solutions of
$\mathcal{L}w=0$ bounded near $0$ behaves like $O\left(  r\right)  $ or
$O\left(  1\right)  .$ Note that $\Phi_{0},\Phi_{+1},\Phi_{-1}$ have this
property. Now define $\Psi_{0}=\kappa w_{0,2},$
\begin{align*}
\Psi_{+1}  &  =\kappa w_{1,3}+\frac{\kappa}{8}w_{1,1},\Psi_{-1}=\kappa
w_{1,4}-\frac{\kappa}{8}w_{1,2},\\
\Psi_{+2}  &  =w_{2,3},\Psi_{-2}=w_{2,4}.
\end{align*}
They behave like $O\left(  r\right)  $ or $O\left(  1\right)  $ near $0,$ but
blows up as $r\rightarrow+\infty.$

\bigskip

\setcounter{equation}{0}
\section{\label{non}Construction of multi-vortex solutions}

\subsection{Approximate solutions and estimate of the error}

We would like to construct traveling wave solutions by gluing together
$N\left(  N+1\right)  /2$ pairs of degree $\pm1$ vortices. Let us simply
choose $N=2,$ the general case is similar, but notations will be much more involved.

For $k=1,2,3,$ let $p_{k},q_{k}\in\mathbb{C}$. We have in mind that $p_{k}$
are close to roots of the Adler-Moser polynomial $A_{2}.$ We define the
translated vortices
\[
u_{k}=v_{+}\left(  z-\varepsilon^{-1}p_{k}\right)  ,u_{3+k}=v_{-}\left(
z-\varepsilon^{-1}q_{k}\right)  .
\]
We then define the approximate solution
\[
u:=\prod\limits_{k=1}^{6}u_{k}.
\]
Note that as $r\rightarrow+\infty,$ $u\rightarrow1.$ Hence the degree of $u$
is $0.$ Let us denote the function $z\rightarrow\overline{u\left(  z\right)
}$ by $\bar{u}.$ The next lemma states that the real part of $u$ is even both
in the $x$ and $y$ variables, while the imaginary part is even in $x$ and odd
in $y.$

\begin{lemma}
The approximate solution $u$ has the following symmetry:%
\[
u\left(  \bar{z}\right)  =\bar{u}\left(  z\right)  ,\text{ }u\left(  z^{\ast
}\right)  =u\left(  z\right)  .
\]

\end{lemma}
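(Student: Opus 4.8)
The plan is to verify the two claimed symmetries directly from the definition
$u=\prod_{k=1}^{6}u_{k}$ by tracking how complex conjugation and the reflection
$z\mapsto z^{\ast}=-\bar z$ act on each factor, and then multiplying the
results back together. The two identities are essentially consequences of (i)
the explicit form $v_{\pm}=e^{\pm i\theta}S_{1}(r)$ together with the fact that
$S_{1}$ is real and radial, and (ii) the arithmetic relation $q_{k}=p_{k}^{\ast}=-\bar p_{k}$
between the centers of the positive and negative vortices, which pairs the
factor $u_{k}=v_{+}(z-\varepsilon^{-1}p_{k})$ with $u_{3+k}=v_{-}(z-\varepsilon^{-1}q_{k})$.

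First I would record the elementary building-block identities for a single
standard vortex. Since $v_{+}(w)=\overline{v_{-}(\bar w)}$ and $v_{+}(-\bar w)=\overline{v_{-}(\bar w)}$
need to be checked, I would compute as follows. Writing $v_{\pm}(w)=e^{\pm i\arg w}S_{1}(|w|)$,
conjugation gives $\overline{v_{+}(w)}=e^{-i\arg w}S_{1}(|w|)=v_{-}(w)$, and
replacing $w$ by $\bar w$ (which conjugates the argument and fixes the modulus)
yields $v_{+}(\bar w)=\overline{v_{+}(w)}$ and similarly $v_{-}(\bar w)=\overline{v_{-}(w)}$.
For the reflection across the imaginary axis one uses $\arg(-\bar w)=\pi-\arg w$
modulo the branch, giving $v_{\pm}(-\bar w)=\overline{v_{\mp}(w)}$; I would state
these in the clean forms actually needed, namely $v_{+}(-\bar w)=\overline{v_{-}(w)}$
and $v_{-}(-\bar w)=\overline{v_{+}(w)}$.

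Next I would assemble the product. For the first identity $u(\bar z)=\bar u(z)$,
evaluate each factor at $\bar z$: using $v_{\pm}(\bar z-\varepsilon^{-1}\bar p_{k})=\overline{v_{\pm}(z-\varepsilon^{-1}p_{k})}$
(noting the centers $p_{k},q_{k}$ are themselves conjugate-closed as a set, since
the roots of $A_{2}$ come in conjugate pairs plus one real root, and $q_{k}=-\bar p_{k}$),
one sees each factor of $u(\bar z)$ is the conjugate of a factor of $u(z)$, so
the full product conjugates, giving $\overline{u(z)}=\bar u(z)$. For the second
identity $u(z^{\ast})=u(z)$ with $z^{\ast}=-\bar z$, I would evaluate
$u_{k}(z^{\ast})=v_{+}(-\bar z-\varepsilon^{-1}p_{k})=v_{+}(-(\bar z+\varepsilon^{-1}\bar q_{k}))=\overline{v_{-}(z+\varepsilon^{-1}\bar{\bar q_{k}})}$
using $p_{k}=-\bar q_{k}$, and track that the reflection exchanges the
positive-vortex factor centered at $p_{k}$ with the negative-vortex factor
centered at $q_{k}=p_{k}^{\ast}$; the set of six factors is permuted among itself
and conjugation is undone by a second conjugation coming from the $v_{+}\leftrightarrow v_{-}$
swap, so the product returns to $u(z)$.

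The main obstacle, and the only point requiring care, is bookkeeping the
pairing: I must confirm that under each symmetry operation the multiset of
labeled factors $\{u_{1},\dots,u_{6}\}$ maps bijectively onto itself (possibly
with each factor conjugated), so that no stray factor is left over and the
degrees match. This rests entirely on the configuration symmetry established in
Lemma \ref{symmetry}, namely that $R(A_{2})$ is invariant under reflection
across the real axis and that its reflection across the imaginary axis is
$R(B_{2})$, i.e.\ the $q_{k}=p_{k}^{\ast}$ location. Once the index set is shown
to be preserved, the identities follow by commuting the finite product with
conjugation, which is routine. I would therefore organize the proof as: first
the single-vortex identities, then the verification that the center-multiset is
invariant under $p_{k}\mapsto\bar p_{k}$ and under the $p_{k}\leftrightarrow q_{k}$
reflection, and finally the one-line product computation for each of the two
claimed symmetries.
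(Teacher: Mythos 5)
Your overall strategy coincides with the paper's: single-vortex identities, invariance of the center sets, then commuting the finite product with the symmetry operations. Your treatment of the first identity $u(\bar z)=\bar u(z)$ is correct. But the key building block you propose for the second identity is false: you claim $v_{\pm}(-\bar w)=\overline{v_{\mp}(w)}$, which (since $\overline{v_{-}}=v_{+}$) amounts to $v_{+}(-\bar w)=v_{+}(w)$. Writing $v_{+}(w)=S(|w|)\,w/|w|$, one actually has
\[
v_{+}(-\bar w)=S(|w|)\,\frac{-\bar w}{|w|}=-\,v_{-}(w)=-\,\overline{v_{+}(w)},
\]
i.e. the reflection produces an extra factor $-1$ (from $e^{i(\pi-\theta)}=-e^{-i\theta}$) that you dropped. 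A degree count shows your identity cannot hold: $w\mapsto -\bar w$ is orientation-reversing, so $w\mapsto v_{+}(-\bar w)$ winds $-1$ times around the origin, while $\overline{v_{-}(w)}=v_{+}(w)$ winds $+1$ times.

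This is not a harmless slip; it propagates to a false conclusion. Following your identities literally, $v_{+}(z^{\ast}-\varepsilon^{-1}p_{k})=\overline{v_{-}(z-\varepsilon^{-1}q_{k})}=v_{+}(z-\varepsilon^{-1}q_{k})$ and $v_{-}(z^{\ast}-\varepsilon^{-1}q_{k})=v_{-}(z-\varepsilon^{-1}p_{k})$, so your product computation gives
\[
u(z^{\ast})=\prod_{k}\Bigl(v_{+}(z-\varepsilon^{-1}q_{k})\,v_{-}(z-\varepsilon^{-1}p_{k})\Bigr)=\overline{u(z)},
\]
which is not $u(z)$: it puts the positive vortices at the $q_{k}$ and the negative ones at the $p_{k}$, and $\{p_{k}\}\neq\{q_{k}\}$, nor is $u$ real-valued; combined with your (correct) first identity it would even force $u(-w)=u(w)$, which is false. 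The correct mechanism, and what the paper uses, is $v_{\pm}(w^{\ast})=(v_{\pm}(w))^{\ast}=-\,v_{\mp}(w)$, whence $u_{k}(z^{\ast})=-u_{3+k}(z)$ and $u_{3+k}(z^{\ast})=-u_{k}(z)$: the factors are exchanged in $\pm$ pairs, each carrying a sign $-1$, and these six signs cancel precisely because the vortices come in equally many positive/negative pairs. This pairwise sign cancellation is the one genuinely nontrivial point of the lemma, and it is exactly what your bookkeeping misses. (There is also a smaller slip: from $q_{k}=-\bar p_{k}$ one gets $-\bar z-\varepsilon^{-1}p_{k}=-\overline{\left(z-\varepsilon^{-1}q_{k}\right)}$, not $-\left(\bar z+\varepsilon^{-1}\bar q_{k}\right)$.)
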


\begin{proof}
Observe that the standard vortex $v_{+}=S\left(  r\right)  e^{i\theta}$
satisfies
\[
v_{+}\left(  \bar{z}\right)  =\bar{v}_{+}\left(  z\right)  ,v_{+}\left(
z^{\ast}\right)  =\left(  v_{+}\left(  z\right)  \right)  ^{\ast}.
\]
The oppositive vortex $v_{-}$ has similar properties. Hence using the fact
that the set $\left\{  p_{1},p_{2},p_{3}\right\}  $ is invariant with respect
to the reflection across the $x$ axis, we get
\begin{align*}
u\left(  \bar{z}\right)   &  =\prod\limits_{k=1}^{3}\left(  v_{+}\left(
\bar{z}-\varepsilon^{-1}p_{k}\right)  v_{-}\left(  \bar{z}-\varepsilon
^{-1}q_{k}\right)  \right) \\
&  =\prod\limits_{k=1}^{3}\left(  \bar{v}_{+}\left(  z-\varepsilon^{-1}\bar
{p}_{k}\right)  \bar{v}_{-}\left(  z-\varepsilon^{-1}\bar{q}_{k}\right)
\right)  =\bar{u}\left(  z\right)  .
\end{align*}
Moreover, since $v_{-}=\bar{v}_{+},$ we have
\begin{align*}
u\left(  z^{\ast}\right)   &  =\prod\limits_{k=1}^{3}\left(  v_{+}\left(
z^{\ast}-\varepsilon^{-1}p_{k}\right)  v_{-}\left(  z^{\ast}-\varepsilon
^{-1}q_{k}\right)  \right) \\
&  =\prod\limits_{k=1}^{3}\left(  \left(  v_{+}\left(  z-\varepsilon^{-1}%
q_{k}\right)  \right)  ^{\ast}\left(  v_{-}\left(  z-\varepsilon^{-1}%
p_{k}\right)  \right)  ^{\ast}\right) \\
&  =\prod\limits_{k=1}^{3}\left(  \bar{v}_{+}\left(  z-\varepsilon^{-1}%
q_{k}\right)  \left(  \bar{v}_{-}\left(  z-\varepsilon^{-1}p_{k}\right)
\right)  \right)  =u\left(  z\right)  .
\end{align*}
This finishes the proof.
\end{proof}

We use $E\left(  u\right)  $ to denote the error of the approximate solution:%
\[
E\left(  u\right)  :=\varepsilon i\partial_{y}u+\Delta u+u\left(  1-\left\vert
u\right\vert ^{2}\right)  .
\]
We have
\begin{align*}
\Delta u  &  =\Delta\left(  u_{1}...u_{6}\right) \\
&  =\sum\limits_{k}\left(  \Delta u_{k}\prod\limits_{j\neq k}u_{j}\right)
+\sum\limits_{k\neq j}\left(  \left(  \nabla u_{k}\star\nabla u_{j}\right)
\prod\limits_{l\neq k,j}u_{l}\right)  ,
\end{align*}
where symbol $\star$ denotes $\nabla u_{k}\star\nabla u_{j}:=\partial
_{x}u_{k}\partial_{x}u_{j}+\partial_{y}u_{k}\partial_{y}u_{j}.$ On the other
hand, writing $\left\vert u_{k}\right\vert ^{2}-1=\rho_{k},$ we obtain
\[
\left\vert u\right\vert ^{2}-1=\prod\limits_{k}\left(  1+\rho_{k}\right)
-1=\sum_{k}\rho_{k}+\sum_{k=2}^{6}Q_{k},
\]
where $Q_{k}=\sum_{i_{1}<i_{2}<\cdot\cdot\cdot<i_{k}}\left(  \rho_{i_{1}}%
\cdot\cdot\cdot\rho_{i_{k}}\right)  .$ Using the fact that $u_{k}$ solves the
Ginzburg-Landau equation, we get
\begin{align*}
E\left(  u\right)   &  =\varepsilon i\sum\limits_{k}\left(  \partial_{y}%
u_{k}\prod\limits_{j\neq k}u_{j}\right) \\
&  +\sum\limits_{k,j,k\neq j}\left(  \left(  \nabla u_{k}\star\nabla
u_{j}\right)  \prod\limits_{l\neq i,j}u_{l}\right)  -u\sum_{k=2}^{6}Q_{k}.
\end{align*}
We have in mind that the main order terms are $\partial_{y}u_{k}%
\prod\limits_{j\neq k}u_{j}$ and $\left(  \nabla u_{k}\star\nabla
u_{j}\right)  \prod\limits_{l\neq k,j}u_{l}.$

Throughout the paper $\left(  r_{j},\theta_{j}\right)  $ will denote the polar
coordinate with respect to the point $p_{j}.$ Note that
\[
\partial_{x}\left(  e^{i\theta}\right)  =-\frac{yie^{i\theta}}{r^{2}}%
,\partial_{y}\left(  e^{i\theta}\right)  =\frac{xie^{i\theta}}{r^{2}}.
\]
We compute
\begin{align*}
\partial_{x}u_{k}  &  =-\frac{iy_{k}e^{i\theta_{k}}}{r_{k}^{2}}S\left(
r_{k}\right)  +\frac{x_{k}}{r_{k}}S^{\prime}\left(  r_{k}\right)
e^{i\theta_{k}},\\
\partial_{y}u_{k}  &  =\frac{ix_{k}e^{i\theta_{k}}}{r_{k}}S\left(
r_{k}\right)  +\frac{y_{k}}{r_{k}}S^{\prime}\left(  r_{k}\right)
e^{i\theta_{k}}.
\end{align*}

\begin{lemma}
In the region $\left\vert z\right\vert >C\varepsilon^{-1},$
\begin{align*}
\left\vert E\left(  u\right)  \right\vert  &  \leq C\left\vert z\right\vert
^{-2},\\
\operatorname{Im}\left(  E\left(  u\right)  \right)   &  \leq C\left\vert
z\right\vert ^{-3}.
\end{align*}

\end{lemma}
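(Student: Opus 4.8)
The plan is to exploit that the whole region $\{|z|>C\varepsilon^{-1}\}$ lies outside the supports of all the cut-off profiles. Concretely, I would fix $C>C_{0}+1+\max_{k}|p_{k}|$; then for $|z|>C\varepsilon^{-1}$ every shifted radius $r_{k}=|z-\varepsilon^{-1}p_{k}|$ (and the analogue for the $q_{k}$) exceeds $C_{0}\varepsilon^{-1}+1$, so by the modification of $S$ we have $S(r_{k})\equiv 1$ there. Hence each factor is a pure phase, $u_{k}=e^{\pm i\theta_{k}}$, and
\[
u=e^{i\Theta},\qquad \Theta=\sum_{k=1}^{3}\arg(z-\varepsilon^{-1}p_{k})-\sum_{k=1}^{3}\arg(z-\varepsilon^{-1}q_{k}).
\]
Since all the centres lie in $\{|z|<C\varepsilon^{-1}\}$, the phase $\Theta$ is harmonic in the region and $|u|\equiv 1$. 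Substituting into the definition of $E$, and using $\Delta e^{i\Theta}=(i\Delta\Theta-|\nabla\Theta|^{2})e^{i\Theta}=-|\nabla\Theta|^{2}e^{i\Theta}$, together with $u(1-|u|^{2})=0$ and $\varepsilon i\partial_{y}u=-\varepsilon(\partial_{y}\Theta)e^{i\Theta}$, I obtain the clean identity
\[
E(u)=-\bigl(|\nabla\Theta|^{2}+\varepsilon\,\partial_{y}\Theta\bigr)e^{i\Theta}.
\]
Everything is thereby reduced to estimating the harmonic phase and its gradient.

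Next I would expand $\Theta$ in a large-$z$ multipole series. The gradient is $\partial_{x}\Theta=\sum_{\pm}\operatorname{Im}\frac{1}{z-w_{k}}$ and $\partial_{y}\Theta=\sum_{\pm}\operatorname{Re}\frac{1}{z-w_{k}}$, where $w_{k}$ ranges over the centres with signs. Because the configuration has total degree zero, the leading $1/z$ contributions cancel and
\[
\sum_{\pm}\frac{1}{z-w_{k}}=\frac{\varepsilon^{-1}\bigl(\sum_{k}p_{k}-\sum_{k}q_{k}\bigr)}{z^{2}}+O\bigl(\varepsilon^{-2}|z|^{-3}\bigr),
\]
so $|\nabla\Theta|=O(\varepsilon^{-1}|z|^{-2})$ and, by expanding $\arg$ directly, $\Theta=O(\varepsilon^{-1}|z|^{-1})$. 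Feeding these into the identity above gives $|\nabla\Theta|^{2}=O(\varepsilon^{-2}|z|^{-4})$ and $\varepsilon\,\partial_{y}\Theta=O(|z|^{-2})$. In the region $|z|>C\varepsilon^{-1}$ one has $\varepsilon^{-1}\le|z|/C$, which turns the first term into $O(|z|^{-2})$ as well, whence $|E(u)|\le C|z|^{-2}$, the first assertion. The essential point is that the factor $\varepsilon$ multiplying $\partial_{y}\Theta$ exactly absorbs the $\varepsilon^{-1}$ carried by the phase, so the full error settles at $|z|^{-2}$ despite the large prefactors.

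For the imaginary part I would use that the scalar $-(|\nabla\Theta|^{2}+\varepsilon\,\partial_{y}\Theta)$ is \emph{real}, so that
\[
\operatorname{Im}E(u)=-\bigl(|\nabla\Theta|^{2}+\varepsilon\,\partial_{y}\Theta\bigr)\sin\Theta.
\]
Thus the imaginary component carries the extra factor $\sin\Theta$, and since $|\Theta|=O(\varepsilon^{-1}|z|^{-1})$ is small in this region, the bound $|\sin\Theta|\le|\Theta|$ supplies one additional power of $|z|^{-1}$ beyond the $O(|z|^{-2})$ already obtained; this yields $\operatorname{Im}E(u)=O(\varepsilon^{-1}|z|^{-3})$, i.e.\ the imaginary part decays one order faster in $|z|$ than the full error, which is the content of the second estimate.

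The main obstacle is precisely this bookkeeping of the $\varepsilon$-powers against the powers of $|z|$: one must verify the degree-zero (dipole-level) cancellation in the multipole expansion of $\Theta$, confirm that the cutoff indeed forces $S\equiv 1$ so that no interaction terms or annulus contributions from the modification of $S$ intervene, and then track how the multiplication by $\varepsilon$ in the traveling term and the smallness of $\sin\Theta$ conspire to produce exactly the stated decay rates. Once the identity $E(u)=-(|\nabla\Theta|^{2}+\varepsilon\,\partial_{y}\Theta)e^{i\Theta}$ and the phase estimates are in hand, the two bounds follow by the routine size comparisons above.
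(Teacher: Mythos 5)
Your core computation is correct, and it is organized more cleanly than the paper's own argument: the paper never passes to the exact unimodular representation, but instead expands $E(u)$ through the product structure (the terms $\varepsilon i\partial_{y}u_{k}\prod_{j\neq k}u_{j}$, the cross terms $\nabla u_{k}\star\nabla u_{j}$, and the terms $Q_{k}$ built from $\rho_{k}=|u_{k}|^{2}-1$) and estimates each piece via the pairwise phase cancellation $|\theta_{1}-\theta_{4}|\le Cr^{-1}$, $|\partial_{x}(\theta_{1}-\theta_{4})|\le Cr^{-2}$. Your use of the cutoff ($S\equiv1$ far out) to get $E(u)=-\bigl(|\nabla\Theta|^{2}+\varepsilon\partial_{y}\Theta\bigr)e^{i\Theta}$ with $\Theta$ harmonic, followed by the multipole expansion, gives a transparent and correct proof of the first bound $|E(u)|\le C|z|^{-2}$ with constants uniform in $\varepsilon$.

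The issue is the second bound. What you actually derive is $|\operatorname{Im}E(u)|\le C\varepsilon^{-1}|z|^{-3}$, and you then assert that this \emph{is} the stated estimate. It is not: the lemma claims $C|z|^{-3}$, and for the lemma to be of any use later (e.g.\ for $\Vert E(u)\Vert_{\ast\ast}\le C\varepsilon^{1-\sigma}$) the constant must be independent of $\varepsilon$; your bound is weaker by the factor $\varepsilon^{-1}$, which at the inner edge $|z|\sim C\varepsilon^{-1}$ of the region is the difference between $\varepsilon^{2}$ and $\varepsilon^{3}$. Moreover, this is not a defect you could remove by working harder: the Adler-Moser configuration has nonzero net dipole moment, $\sum_{k}(p_{k}-q_{k})=2\sum_{k}\operatorname{Re}p_{k}\neq0$, so genuinely $\Theta\sim\varepsilon^{-1}y/|z|^{2}$ and $\operatorname{Im}E(u)\sim-\varepsilon\partial_{y}\Theta\,\sin\Theta\sim\varepsilon^{-1}|z|^{-3}$ along, say, the imaginary axis; your estimate is sharp, and the uniform-constant version of the stated inequality is false. (The paper's own two-line proof leaks exactly the same factor: since the pair separation is $\varepsilon^{-1}|p_{1}-q_{1}|$, the true bounds are $|\theta_{1}-\theta_{4}|\le C\varepsilon^{-1}r^{-1}$ and $|\partial_{x}(\theta_{1}-\theta_{4})|\le C\varepsilon^{-1}r^{-2}$, and the proof never addresses $\operatorname{Im}E(u)$ separately at all.) The statement that is both true and actually used downstream concerns the \emph{conjugated} error: your own identity shows that $e^{-i\Theta}E(u)=-\bigl(|\nabla\Theta|^{2}+\varepsilon\partial_{y}\Theta\bigr)$ is real in this region, so the imaginary part that enters the $\Vert\cdot\Vert_{\ast\ast}$ norm (which weighs $\operatorname{Im}$ by $r_{j}^{2+\sigma}$) vanishes identically there. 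You should flag this discrepancy and record the statement for $e^{-i\tilde{\theta}}E(u)$, rather than claim your $\varepsilon^{-1}|z|^{-3}$ bound coincides with the one asserted in the lemma.
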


\begin{proof}
We first estimate, for $z>C\varepsilon^{-1}$,%
\begin{align*}
\left\vert \theta_{1}-\theta_{4}\right\vert  &  \leq Cr^{-1},\\
\left\vert \partial_{x}\left(  \theta_{1}-\theta_{4}\right)  \right\vert  &
\leq Cr^{-2}.
\end{align*}
Hence $\left\vert \partial_{x}u\right\vert \leq Cr^{-2}.$ Next,
\begin{align*}
\left\vert \nabla u_{k}\star\nabla u_{j}\right\vert  &  =\left\vert
\partial_{x}u_{k}\partial_{x}u_{j}+\partial_{y}u_{k}\partial_{y}%
u_{j}\right\vert \\
&  \leq\left\vert \partial_{x}u_{k}\right\vert \left\vert \partial_{x}%
u_{j}\right\vert +\left\vert \partial_{y}u_{k}\right\vert \left\vert
\partial_{y}u_{j}\right\vert \\
&  \leq C\left\vert z\right\vert ^{-2}.
\end{align*}
Finally, since $\rho_{k}\leq C\left\vert z\right\vert ^{-2},$ we have
$Q_{k}\leq C\left\vert z\right\vert ^{-4}.$ This finishes the proof.
\end{proof}

\subsection{Projection of the error on the kernel}

Now we study the projection of the error of the approximate solution on the
kernels. We have, in the region where $\left\vert z-\varepsilon^{-1}%
p_{k}\right\vert \leq\frac{1}{2}\varepsilon^{-1},$
\begin{align*}
\nabla u_{k}\star\nabla u_{j}  &  =\partial_{x}u_{k}\partial_{x}u_{j}%
+\partial_{y}u_{k}\partial_{y}u_{j}\\
&  \sim\partial_{x}u_{k}\left(  -\frac{y_{j}ie^{i\theta_{j}}}{r_{j}^{2}%
}\right)  +\partial_{y}u_{k}\left(  \frac{x_{j}ie^{i\theta_{j}}}{r_{j}^{2}%
}\right)  .
\end{align*}
We have%
\[
\operatorname{Im}\left(  \partial_{y}u_{k}\overline{\left(  \partial_{x}%
u_{k}\right)  }\right)  =\frac{SS^{\prime}}{r_{k}}.
\]
It follows that
\begin{align*}
&  \operatorname{Re}\int_{\left\vert z-\varepsilon^{-1}p_{k}\right\vert \leq
r_{j}}e^{-i\theta_{j}}\left(  \nabla u_{k}\star\nabla u_{j}\right)  \left(
\overline{\partial_{x}u_{k}}\right)  dxdy\\
&  =-\operatorname{Re}\left(  \frac{\varepsilon}{p_{k}-p_{j}}\right)
\operatorname{Im}\int_{\left\vert z-p_{k}\right\vert \leq r_{j}}%
\operatorname{Im}\partial_{y}u_{k}\overline{\left(  \partial_{x}u_{k}\right)
}\\
&  =-\operatorname{Re}\left(  \frac{\varepsilon}{p_{k}-p_{j}}\right)
\int_{\left\vert z-\varepsilon^{-1}p_{k}\right\vert \leq r_{j}}\frac
{SS^{\prime}}{r_{k}}\\
&  =-\pi\operatorname{Re}\left(  \frac{\varepsilon}{p_{k}-p_{j}}\right)
+O\left(  r_{k}^{-2}\right)  .
\end{align*}
Similarly,
\begin{align*}
&  \operatorname{Re}\int_{\left\vert z-\varepsilon^{-1}p_{k}\right\vert \leq
r_{j}}e^{-i\theta_{j}}\left(  \nabla u_{k}\star\nabla u_{j}\right)
\overline{\left(  \partial_{y}u_{k}\right)  }\\
&  =\pi\operatorname{Im}\left(  \frac{\varepsilon}{p_{k}-p_{j}}\right)
+O\left(  r_{k}^{-2}\right)  .
\end{align*}

Next, if $l,j\neq k,$ we estimate that for $\left\vert z-\varepsilon^{-1}%
p_{k}\right\vert \leq C\varepsilon^{-1},$
\begin{align*}
\left(  \nabla u_{l}\star\nabla u_{j}\right)  \overline{\left(  \partial
_{x}u_{k}\right)  }  &  \sim e^{-i\theta_{k}}\left(  \frac{y_{l}}{r_{l}^{2}%
}e^{i\theta_{l}}\frac{y_{j}}{r_{j}^{2}}e^{i\theta_{j}}+\frac{x_{l}}{r_{l}^{2}%
}e^{i\theta_{l}}\frac{x_{j}}{r_{j}^{2}}e^{i\theta_{j}}\right)  \left(
-\frac{y_{k}S}{r_{k}^{2}}+\frac{x_{k}S^{\prime}}{r_{k}}\right) \\
&  =O\left(  \frac{1}{r_{l}r_{j}}\right)  .
\end{align*}
Finally,
\begin{align*}
\operatorname{Re}\int_{\mathbb{R}^{2}}i\varepsilon\partial_{y}u_{k}%
\overline{\left(  \partial_{y}u_{k}\right)  }  &  =O\left(  \varepsilon
^{2}\right)  ,\\
\operatorname{Re}\int_{\mathbb{R}^{2}}i\varepsilon\partial_{y}u_{k}%
\overline{\left(  \partial_{x}u_{k}\right)  }  &  =\pi\varepsilon+O\left(
\varepsilon^{2}\right)  .
\end{align*}
Combing these estimates, we find that the projected equation at the main order
is $\left(  \ref{trans}\right)  $ with $\mu=1.$

\subsection{The nonlinear scheme}

We search a traveling wave solution $U$ of GP equation:%
\[
-i\varepsilon\partial_{y}U=\Delta U+U\left(  1-\left\vert U\right\vert
^{2}\right)  .
\]
After a rescaling, we get the equation
\[
-i\partial_{y}U=\Delta U+\varepsilon^{-2}U\left(  1-\left\vert U\right\vert
^{2}\right)
\]
Then $\phi$ should satisfy
\[
\varepsilon i\partial_{y}\left(  u+\phi\right)  =\Delta\left(  u+\phi\right)
+\left(  u+\phi\right)  \left(  1-\left\vert u+\phi\right\vert ^{2}\right)  .
\]
We write this equation as
\begin{equation}
-\Delta\phi+\left(  1-\left\vert u\right\vert ^{2}\right)  \phi
-2u\operatorname{Re}\left(  \bar{u}\phi\right)  =E\left(  u\right)
+\varepsilon i\partial_{x}\phi+N\left(  \phi\right)  . \label{L}%
\end{equation}
Here $N\left(  \phi\right)  $ is a higher order perturbation term and equals
$2u\left\vert \phi\right\vert ^{2}+\bar{u}\phi^{2}+\phi\left\vert
\phi\right\vert ^{2}.$ Denote the left hand side by $G\phi.$ Instead of
analyze the operator $G$ directly, we will study its conjugate operator,
possibly with different forms in different regions of $R^{2}.$ This technique
has already appeared in Section 2. We write $u$ as $\tilde{\rho}%
e^{i\tilde{\theta}},$ where $\tilde{\rho},\tilde{\theta}$ are real valued.
Observe that
\[
\mathbb{L}\eta:=e^{-i\tilde{\theta}}G\left(  e^{i\tilde{\theta}}\eta\right)
=-e^{-i\tilde{\theta}}\Delta\left(  e^{i\tilde{\theta}}\eta\right)  +\left(
1-\tilde{\rho}^{2}\right)  \eta-2\tilde{\rho}\operatorname{Re}\left(
\tilde{\rho}\eta\right)  .
\]
If we write $\eta=\eta_{1}+i\eta_{2},$ with $\eta_{i}$ being real function,
then
\begin{align*}
\mathbb{L}\eta &  =-\Delta\eta-2e^{-i\tilde{\theta}}\nabla\left(
e^{i\tilde{\theta}}\right)  \nabla\eta-e^{-i\tilde{\theta}}\Delta\left(
e^{i\tilde{\theta}}\right)  \eta\\
&  +\left(  1-\tilde{\rho}^{2}\right)  \left(  \eta_{1}+i\eta_{2}\right)
-2\tilde{\rho}^{2}\eta_{1}.
\end{align*}
This tells us the in the region $\left\vert z\right\vert >C\varepsilon^{-1},$
the real part of $\mathbb{L}\eta$ is a well behaved operator like $-\Delta
\eta_{1}+2\eta_{1}.$ As for the imaginary part, we recall that by our
definition of $S,$
\[
1-\tilde{\rho}^{2}=0,\text{ for }\left\vert z\right\vert \geq C_{1}%
\varepsilon^{-1}.
\]
Hence the imaginary part behaves like the Laplacian operator $-\Delta\eta
_{2}.$ Writing $\phi=e^{i\tilde{\theta}}\eta,$ equation $\left(
\ref{L}\right)  $ becomes
\[
\mathbb{L}\eta=\left(  E\left(  u\right)  +\varepsilon i\partial_{x}%
\phi+N\left(  \phi\right)  \right)  e^{-i\tilde{\theta}}.
\]
Let us denote the right hand side by $M.$

\begin{lemma}
$M$ has the following symmetry%
\[
M\left(  \bar{z}\right)  =\overline{M\left(  z\right)  },M\left(  z^{\ast
}\right)  =M\left(  z\right)  .
\]

\end{lemma}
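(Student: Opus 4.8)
The statement I must prove is that the function
\[
M=\left(E(u)+\varepsilon i\partial_{x}\phi+N(\phi)\right)e^{-i\tilde{\theta}}
\]
inherits the symmetry
\[
M(\bar z)=\overline{M(z)},\qquad M(z^{\ast})=M(z).
\]
This is a bookkeeping lemma: the plan is to verify the two reflection identities term by term, using the symmetry of the approximate solution $u$ already established (namely $u(\bar z)=\bar u(z)$ and $u(z^{\ast})=u(z)$), together with the fact that we are searching for $\phi$ in the class of functions carrying the same symmetry. I would first record the symmetry we impose on $\phi$, namely $\phi(\bar z)=\overline{\phi(z)}$ and $\phi(z^{\ast})=\phi(z)$; this is consistent because the linear operator $\mathbb L$ preserves this symmetry class, so the fixed-point argument can be run inside it. The quantity $\tilde\theta$ is the phase of $u$, so from $u(\bar z)=\bar u(z)$ we read off $\tilde\theta(\bar z)=-\tilde\theta(z)$ and $\tilde\rho(\bar z)=\tilde\rho(z)$, and from $u(z^{\ast})=u(z)$ we get $\tilde\theta(z^{\ast})=\tilde\theta(z)$ and $\tilde\rho(z^{\ast})=\tilde\rho(z)$; these are the phase-level facts driving everything.

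\textbf{Plan of the verification.} I would treat the three summands of $M$ separately. For the error term $E(u)$, differentiating the defining identity $E(u)=\varepsilon i\partial_{y}u+\Delta u+u(1-|u|^{2})$ and using the chain rule for the reflections $z\mapsto\bar z$ and $z\mapsto z^{\ast}$, I would check that $E(u)(\bar z)=\overline{E(u)(z)}$ and $E(u)(z^{\ast})=E(u)(z)$. The key sub-facts are that the Laplacian commutes with both reflections, that $\partial_{y}$ picks up a sign under $z\mapsto\bar z$ (which is compensated by the conjugation and the factor $i$, since $\overline{i\partial_{y}u(z)}=-i\overline{\partial_y u(z)}$ matches $i\partial_y[u(\bar z)]=i\partial_y[\bar u(z)]$ after the coordinate change), and that $\partial_{y}$ is invariant under $z\mapsto z^{\ast}$ (reflection across the $y$-axis fixes the $y$-coordinate). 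The cubic term $u(1-|u|^{2})$ respects the symmetries because $|u|^{2}=\tilde\rho^{2}$ is even under both reflections. For the nonlinear term $N(\phi)=2u|\phi|^{2}+\bar u\phi^{2}+\phi|\phi|^{2}$, the same two symmetry relations for $u$ and for $\phi$ give, under $z\mapsto\bar z$, that each monomial conjugates correctly (e.g. $2u|\phi|^{2}$ maps to $2\bar u|\phi|^{2}$, which is $\overline{2u|\phi|^{2}}$), and under $z\mapsto z^{\ast}$ each monomial is invariant. For the transport term $\varepsilon i\partial_{x}\phi$, I note that $\partial_{x}$ flips sign under $z\mapsto z^{\ast}$ (reflection across the $y$-axis sends $x\mapsto -x$), but this sign is exactly cancelled: since $\phi(z^{\ast})=\phi(z)$ I have $\partial_x[\phi(z^{\ast})]=-(\partial_x\phi)(z^{\ast})$, so demanding $M(z^{\ast})=M(z)$ forces the correct pairing with the conjugation rule—this is the one place where the signs must be tracked with care.

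\textbf{Combining and the main obstacle.} Finally I would multiply by $e^{-i\tilde\theta}$ and combine. Under $z\mapsto\bar z$, the factor $e^{-i\tilde\theta}$ becomes $e^{+i\tilde\theta}=\overline{e^{-i\tilde\theta}}$ (using $\tilde\theta(\bar z)=-\tilde\theta(z)$), which converts the bracketed conjugation into the conjugation of the whole product, giving $M(\bar z)=\overline{M(z)}$; under $z\mapsto z^{\ast}$ the factor $e^{-i\tilde\theta}$ is invariant (using $\tilde\theta(z^{\ast})=\tilde\theta(z)$), giving $M(z^{\ast})=M(z)$. The part requiring the most care, and which I expect to be the main obstacle, is the correct interleaving of the two reflections with the differential operators $\partial_x,\partial_y$: one must consistently distinguish between differentiating the composed function $z\mapsto(f\circ\text{refl})(z)$ versus evaluating $(\partial f)$ at the reflected point, and keep the induced $\pm$ signs straight so that they cancel against the sign produced by complex conjugation and by the phase factor $e^{-i\tilde\theta}$. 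Once the symmetry class for $\phi$ is fixed and these chain-rule signs are tabulated, the verification is a routine matching of the two sides for each of the finitely many monomials.
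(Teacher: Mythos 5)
Your overall strategy---checking the two reflection identities term by term, using $u(\bar z)=\overline{u(z)}$ and $u(z^{\ast})=u(z)$, the induced phase-level facts $\tilde\rho(\bar z)=\tilde\rho(z)$, $\tilde\theta(\bar z)=-\tilde\theta(z)$, $\tilde\rho(z^{\ast})=\tilde\rho(z)$, $\tilde\theta(z^{\ast})=\tilde\theta(z)$, and the requirement that $\phi$ be sought in the same symmetry class---is exactly what the paper's one-line proof (``this follows from the symmetry of the approximate solution $u$'') compresses. Your treatment of $E(u)$, of the cubic terms in $N(\phi)$, and of the prefactor $e^{-i\tilde\theta}$ is correct.

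However, your handling of the transport term contains a genuine error. If $\phi(z^{\ast})=\phi(z)$, the chain rule gives $(\partial_x\phi)(z^{\ast})=-\partial_x\phi(z)$, so the term $\varepsilon i\partial_x\phi$ is \emph{odd} under $z\mapsto z^{\ast}$; since this reflection involves no complex conjugation, there is nothing available to cancel that sign, and your claim that it is ``exactly cancelled'' is false. The same term also fails the other symmetry: $(\partial_x\phi)(\bar z)=\overline{\partial_x\phi(z)}$, hence $\varepsilon i(\partial_x\phi)(\bar z)=-\,\overline{\varepsilon i\partial_x\phi(z)}$, again the wrong sign. Read literally with $\partial_x\phi$, the lemma would be false. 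The resolution is that the derivative term in $(\ref{L})$ must be $\varepsilon i\partial_y\phi$: the wave travels in the $y$ direction and the error is defined as $E(u)=\varepsilon i\partial_y u+\Delta u+u(1-|u|^2)$, so linearizing $\varepsilon i\partial_y(u+\phi)$ produces $\varepsilon i\partial_y\phi$, and the $\partial_x$ appearing in $(\ref{L})$ is a typo. With $\partial_y$ both identities do hold: under $z\mapsto\bar z$ one has $(\partial_y\phi)(\bar z)=-\overline{\partial_y\phi(z)}$, and this chain-rule sign from $y\mapsto-y$ cancels against the sign produced by conjugating $i$; under $z\mapsto z^{\ast}$ the operator $\partial_y$ commutes with the reflection, so the term is invariant outright, with no cancellation needed. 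Once you replace $\partial_x\phi$ by $\partial_y\phi$ and note this discrepancy explicitly, the rest of your verification goes through.
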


\begin{proof}
This follows from the symmetry of the approximate solution $u.$
\end{proof}

We also recall the following result from Lemma 4.2 in \cite{LFH}:

\begin{lemma}
\label{h}Let $h$ satisfy%
\[
\Delta h+f\left(  z\right)  =0\text{, }h\left(  \bar{z}\right)  =-h\left(
z\right)  ,\left\vert h\right\vert \leq C,
\]
where $f$ satisfies
\[
\left\vert f\left(  z\right)  \right\vert \leq\frac{C}{\left(  1+\left\vert
z\right\vert \right)  ^{2+\sigma}},\sigma\in\left(  0,1\right)  .
\]
Then
\[
\left\vert h\left(  z\right)  \right\vert \leq\frac{C}{\left(  1+\left\vert
z\right\vert \right)  ^{\sigma}}.
\]

\end{lemma}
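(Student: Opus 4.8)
The plan is to solve the equation explicitly by a logarithmic potential and then extract the decay rate from a careful region decomposition, using the symmetry of $h$ to kill the leading-order logarithm.

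First I would record the consequences of the symmetry $h(\bar z)=-h(z)$. Writing $z=x+iy$, this says $h$ is odd in $y$; in particular $h\equiv 0$ on the real axis, and since $\Delta$ commutes with the reflection $y\mapsto -y$, the source $f=-\Delta h$ is also odd in $y$, whence $\int_{\mathbb{R}^2}f=0$. This vanishing of the total mass is exactly what is needed to avoid a logarithmically growing potential. I would then introduce the Newtonian potential
\[
h_p(z)=-\frac{1}{2\pi}\int_{\mathbb{R}^2}\ln|z-w|\,f(w)\,dw,
\]
which is well defined because $f\in L^1\cap L^\infty$ (using $\sigma>0$) and satisfies $\Delta h_p=-f$. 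The same reflection argument shows $h_p$ is odd in $y$, so $h_p=0$ on the real axis. Since $h_p$ turns out to be bounded (by the estimate below) and $h$ is bounded by hypothesis, the difference $h-h_p$ is a bounded harmonic function on $\mathbb{R}^2$, hence constant by Liouville; evaluating on the real axis, where both vanish, forces the constant to be $0$, so $h=h_p$. This reduces the lemma to a pointwise estimate on the potential.

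Next I would estimate $h_p(z)$ for $R:=|z|$ large. Using $\int f=0$ I first rewrite
\[
h_p(z)=-\frac{1}{2\pi}\int_{\mathbb{R}^2}\ln\frac{|z-w|}{|z|}\,f(w)\,dw,
\]
and split $\mathbb{R}^2$ into the three regions $A_1=\{|w|\le R/2\}$, $A_2=\{|z-w|\le R/2\}$, and $A_3$ the remainder. On $A_1$ the Taylor expansion of the logarithm gives $\bigl|\ln\tfrac{|z-w|}{|z|}\bigr|\le C|w|/R$, and $\int_{A_1}|w|\,|f|\le CR^{1-\sigma}$, so this piece is $O(R^{-\sigma})$. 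On $A_2$ one has $|f|\le CR^{-2-\sigma}$ while $\int_{|u|\le R/2}\bigl|\ln\tfrac{|u|}{R}\bigr|\,du=O(R^2)$, again giving $O(R^{-\sigma})$ and absorbing the integrable logarithmic singularity at $w=z$. On $A_3$, where both $|w|\gtrsim R$ and $|z-w|\gtrsim R$, I would split once more into $|w|<2R$ (bounded log, area $\sim R^2$, $|f|\le CR^{-2-\sigma}$) and $|w|\ge 2R$ (here $|z-w|\sim|w|$, and the substitution $\rho=Rs$ turns $\int_{2R}^\infty\ln\tfrac{\rho}{R}\,\rho^{-1-\sigma}\,d\rho$ into $R^{-\sigma}\int_2^\infty s^{-1-\sigma}\ln s\,ds$, which converges precisely because $\sigma>0$). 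Summing the three contributions yields $|h_p(z)|\le CR^{-\sigma}$, and combining with the obvious local bound gives the claimed $|h|\le C(1+|z|)^{-\sigma}$.

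The main difficulty, and the reason a soft maximum-principle argument does not suffice, is that the target rate $r^{-\sigma}$ is \emph{subharmonic} ($\Delta r^{-\sigma}=\sigma^2 r^{-2-\sigma}>0$), so it cannot serve as a supersolution dominating the positive source $|f|$; one genuinely has to produce the solution and estimate it. Within the potential estimate the delicate points are that the first moment $\int|w|\,|f|$ diverges (forcing the region-dependent splitting rather than a global Taylor subtraction), and that the far region $A_3$ contributes a growing logarithm which is tamed only by the convergence of $\int^\infty s^{-1-\sigma}\ln s\,ds$ — that is, the whole estimate hinges on $\sigma>0$. The symmetry hypothesis enters twice in an essential way: it supplies $\int f=0$ (which is necessary for a bounded solution to exist) and it pins down the additive harmonic constant.
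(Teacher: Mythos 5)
Your proof is correct, but note that the paper itself does not prove this lemma at all: it is imported verbatim as Lemma 4.2 of \cite{LFH}, so your argument is necessarily a different (and self-contained) route. The argument holds up at every step: the reflection $z\mapsto\bar z$ gives oddness of $f=-\Delta h$ in $y$ and hence $\int_{\mathbb{R}^2}f=0$ (legitimate since $2+\sigma>2$ makes $f\in L^1$); the Newtonian potential $h_p$ is well defined, inherits the oddness, and satisfies $\Delta h_p=-f$; Weyl's lemma plus Liouville identifies $h=h_p+\mathrm{const}$, and evaluation on the real axis (where both vanish by oddness and continuity, the latter from elliptic regularity) kills the constant. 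The three-region estimate of $\int\ln\frac{|z-w|}{|z|}f(w)\,dw$ is the standard and correct way to extract the $R^{-\sigma}$ rate: the moment bound $\int_{|w|\le R/2}|w|\,|f|\le CR^{1-\sigma}$ uses $\sigma<1$, the near-singularity region uses $|f|\le CR^{-2-\sigma}$ against the $O(R^2)$ integral of the logarithm, and the far region uses convergence of $\int_2^{\infty}s^{-1-\sigma}\ln s\,ds$, which is where $\sigma>0$ enters. Your observation that a naive barrier fails because $\Delta r^{-\sigma}=\sigma^2 r^{-2-\sigma}>0$ in two dimensions is also apt (one could try the half-plane barrier $r^{-\sigma}\sin\theta$, which is superharmonic for $\sigma\in(0,1)$, but it degenerates at the axis where $|f|$ need not vanish, so the representation-formula approach you chose is the cleaner one). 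In short: a complete, correct proof where the paper offers only a citation.
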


Now we introduce the functional framework adapted to the mapping property of
the linearized operator $\mathbb{L}.$. Following \cite{LFH}, we fix $\gamma
\in\left(  0,1\right)  $ and $\sigma\in\left(  0,\frac{1}{2}\right)  .$ for
$\eta=\eta_{1}+\eta_{2}i,$ we define \
\begin{align*}
\left\Vert \eta\right\Vert _{\ast} &  :=\sum_{j}\left\Vert u\eta\right\Vert
_{C^{2,\gamma}\left(  r_{j}<2\right)  }+\sum_{j}\left\Vert u\eta\right\Vert
_{C^{2,\gamma}\left(  r_{j}<3\right)  }\\
&  +\sum_{j}\left\Vert r_{j}^{1+\sigma}\eta_{1}\right\Vert _{L^{\infty}\left(
r_{j}>2\right)  }+\sum_{j}\left\Vert r_{j}^{2+\sigma}\nabla\eta_{1}\right\Vert
_{L^{\infty}\left(  r_{j}>2\right)  }\\
&  +\sum_{j}\left\Vert r_{j}^{\sigma}\eta_{2}\right\Vert _{L^{\infty}\left(
r_{j}>2\right)  }+\sum_{j}\left\Vert r_{j}^{1+\sigma}\nabla\eta_{2}\right\Vert
_{L^{\infty}\left(  r_{j}>2\right)  }.
\end{align*}
For $h=h_{1}+ih_{2},$ we define
\begin{align*}
\left\Vert h\right\Vert _{\ast\ast} &  :=\sum_{j}\left\Vert h\right\Vert
_{C^{0,\gamma}\left(  r_{j}<2\right)  }\\
&  +\sum_{j}\left\Vert r_{j}^{1+\sigma}h_{1}\right\Vert _{L^{\infty}\left(
r_{j}<2\right)  }+\sum_{j}\left\Vert r_{j}^{2+\sigma}h_{2}\right\Vert
_{L^{\infty}\left(  r_{j}<2\right)  }.
\end{align*}

We have the following projected linear theory.

\begin{proposition}
\label{Linear}Let $\varepsilon$ be small. Suppose
\[
\left\{
\begin{array}
[c]{l}%
\mathbb{L}\eta=h,\\
\operatorname{Re}\left(  \int_{\left\vert z-\varepsilon^{-1}p_{k}\right\vert
\leq\varepsilon^{-1}}\bar{\eta}e^{-i\tilde{\theta}}\partial_{x}u\right)
=0,k=1,...,6.\\
\operatorname{Re}\left(  \int_{\left\vert z-\varepsilon^{-1}p_{k}\right\vert
\leq\varepsilon^{-1}}\bar{\eta}e^{-i\tilde{\theta}}\partial_{y}u\right)
=0,k=1,...,6\\
\eta e^{i\tilde{\theta}}\text{ has the symmetry as }M.
\end{array}
\right.
\]
Then $\left\Vert \eta\right\Vert _{\ast}\leq C\left\vert \ln\varepsilon
\right\vert \left\Vert h\right\Vert _{\ast\ast}.$
\end{proposition}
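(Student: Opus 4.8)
The plan is to establish this a priori estimate by the standard blow-up (normalization) argument. Suppose the conclusion fails. Then there exist sequences $\varepsilon_n\to 0$, together with $\eta_n$ and $h_n$ solving the projected problem, such that $\left\Vert \eta_n\right\Vert_{\ast}=1$ while $|\ln\varepsilon_n|\,\left\Vert h_n\right\Vert_{\ast\ast}\to 0$; in particular $\left\Vert h_n\right\Vert_{\ast\ast}\to 0$. The goal is to extract enough compactness to show that \emph{every} term defining the $\ast$-norm tends to zero, contradicting $\left\Vert\eta_n\right\Vert_{\ast}=1$.

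First I would carry out the \emph{inner analysis} near each vortex. Fixing $k$ and translating so that $\varepsilon^{-1}p_k$ becomes the origin, the conjugated operator $\mathbb{L}$ converges on compact sets to the model operator $\mathcal{L}$ of the degree $\pm1$ vortex from Section \ref{Pri}. Since $\left\Vert\eta_n\right\Vert_{\ast}$ controls $u\eta_n$ in $C^{2,\gamma}$ on bounded balls, elliptic estimates give (along a subsequence) local convergence to a bounded solution $\eta^{\infty}$ of $\mathcal{L}\eta^{\infty}=0$. By the nondegeneracy of $v_{\pm}$ recalled in Section \ref{Pri}, $\eta^{\infty}$ is a linear combination of the Jacobi fields $\Phi^{0},\Phi^{+1},\Phi^{-1}$. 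The two orthogonality conditions against $\partial_x u$ and $\partial_y u$ at each vortex pass to the limit and annihilate the translational parts $\Phi^{+1},\Phi^{-1}$, while the imposed symmetry of $\eta e^{i\tilde{\theta}}$ (the same as that of $M$) forces the rotational part $\Phi^{0}=iS$ to vanish as well. Hence $\eta^{\infty}=0$, so the inner $C^{2,\gamma}$ contributions to $\left\Vert\eta_n\right\Vert_{\ast}$ vanish in the limit.

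Next I would treat the \emph{outer region} $r_k>2$ by maximum-principle barriers adapted to the weighted norm, exploiting the decoupling of $\mathbb{L}$ into real and imaginary parts noted before the statement. For the real part $\eta_1$ the operator behaves like the coercive $-\Delta+2$; a barrier of the form $r_k^{-1-\sigma}$ together with the decay of $h_1$ built into $\left\Vert h\right\Vert_{\ast\ast}$ yields the bounds on the $r_k^{1+\sigma}\eta_1$ and $r_k^{2+\sigma}\nabla\eta_1$ terms. For the imaginary part $\eta_2$ the operator degenerates to the Laplacian, and here I would invoke Lemma \ref{h}: the symmetry of $M$ makes $\eta_2$ odd in the appropriate variable, so solvability of $\Delta h+f=0$ with $h$ odd gives the $r_k^{-\sigma}$ decay, and interior Schauder estimates upgrade this to the gradient bound. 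Matching the inner and outer expansions, and in particular controlling the slowly growing mode-zero imaginary kernel, which behaves like $c_1+c_2\ln r$ at infinity by Lemma \ref{n=0}(II), across the annulus $2<r_k<C\varepsilon^{-1}$, is precisely what produces the loss factor $|\ln\varepsilon|$.

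The main obstacle, I expect, is the imaginary (phase) part. Because it is governed by the Laplacian, whose natural bounded kernels are only logarithmic rather than decaying, the estimate cannot avoid the $|\ln\varepsilon|$ loss, and tracking its exact power while simultaneously controlling the mutual interaction of the $N(N+1)$ vortices in the overlapping regions is the delicate point. Concretely, once the inner limits are shown to vanish, one must rule out a nonzero bounded harmonic-at-leading-order phase, and it is the growth of $w_{0,2}$ over the region of size $\varepsilon^{-1}$ that enters there; estimating this contribution carefully, rather than the coercive real part or the localized inner problem, is the crux of the argument.
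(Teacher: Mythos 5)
Your proposal follows essentially the route the paper itself indicates for Proposition \ref{Linear}: the paper omits the details and simply says the result follows from the linear theory of the $\pm1$ vortex in Section \ref{Pri} together with Lemma \ref{h}, or alternatively from a blow-up/contradiction argument as in Lemma 5.1 of \cite{LFH} (in the spirit of \cite{Manuel}) --- and your scheme is exactly a combination of these ingredients, with the correct identification of the phase (imaginary) component as the source of the $\left\vert \ln\varepsilon\right\vert$ loss.

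There is, however, one step that is misattributed and, as literally written, would fail. In the inner blow-up limit at a vortex that does \emph{not} lie on the real axis (e.g.\ the pair $p_{2}$, $p_{3}=\bar{p}_{2}$), the imposed symmetry $\phi(\bar{z})=\overline{\phi(z)}$, $\phi(z^{\ast})=\phi(z)$ is not a constraint on the local limit at that single vortex: it only relates the limit profile at $p_{2}$ to the limit profile at its mirror image $p_{3}$, so it forces the coefficients of $\Phi^{0}$ at the two vortices to be related, not to vanish individually. What actually excludes the rotational mode $\Phi^{0}=iS$ is the decay encoded in the $\ast$-norm: the bound $\sum_{j}\Vert r_{j}^{\sigma}\eta_{2}\Vert_{L^{\infty}(r_{j}>2)}\leq 1$ passes to the blow-up limit and forces the imaginary part of the limit to decay like $r^{-\sigma}$, whereas the imaginary part of $\Phi^{0}$ tends to $1$ at infinity. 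By contrast, the translational modes $\Phi^{+1},\Phi^{-1}$ have imaginary parts decaying like $r^{-1}$, which \emph{is} compatible with the weight (since $\sigma<\tfrac12$); these are precisely the modes that require the orthogonality conditions, as you use them. The symmetry is genuinely needed where you invoke it in the outer region --- to apply Lemma \ref{h} to the phase component and to rule out the constant/logarithmic behavior of the mode-zero kernel from Lemma \ref{n=0}(II) --- but it cannot be the mechanism that kills $\Phi^{0}$ in the inner analysis. With this correction, your argument is the intended one.
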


\begin{proof}
This can be proved by using the linear theory of the standard $\pm1$ vortex
described in Section \ref{Pri} and Lemma \ref{h}. It can also be proved along
the same ideas as that of Lemma 5.1 in \cite{LFH}, using blow up and
contradiction arguments, which is in the spirit similar as that of
\cite{Manuel}. Since this type of results are by now more or less standard, we
omit the details.
\end{proof}

Now we are ready to prove our main theorem in this paper. Since technically
the method is quite similar to that of \cite{LFH}, we only sketch the main steps.

Setting $\phi=e^{i\tilde{\theta}}\eta$, we write the nonlinear problem as
\begin{equation}
\mathbb{L}\eta=e^{-i\tilde{\theta}}\left(  E\left(  u\right)  +\varepsilon
i\partial_{x}\phi+N\left(  \phi\right)  \right)  .\label{nonlinear}%
\end{equation}
The error $E\left(  u\right)  $ can be estimated by $\left\Vert E\left(
u\right)  \right\Vert _{\ast\ast}\leq C\varepsilon^{1-\sigma}.$ By Proposition
\ref{Linear}, the equation $\left(  \ref{nonlinear}\right)  $ can be solved
modulo projection on the kernel $\partial_{x}u,\partial_{y}u,$ using
contradiction argument. More precisely, let $\eta_{k}$ be cutoff functions
supported in the region $\left\vert z-\varepsilon^{-1}p\right\vert \leq
c\varepsilon^{-1},$ for a fixed small constant $c$ less than the distances
between any two roots of the Adler-Moser polynomials $A_{n},B_{n}.$ We can
find $c_{k},d_{k},\eta$ such that
\[
\mathbb{L}\eta=e^{-i\tilde{\theta}}\left(  E\left(  u\right)  +\varepsilon
i\partial_{x}\phi+N\left(  \phi\right)  \right)  +\sum_{k}\left(
c_{k}e^{-i\tilde{\theta}}\partial_{x}u+d_{k}e^{-i\tilde{\theta}}\partial
_{y}u\right)  \eta_{k}.
\]
Moreover, $\left\Vert \eta\right\Vert _{\ast}\leq C\left\vert \ln
\varepsilon\right\vert \varepsilon^{1-\sigma}.$ Projecting both sides on
$\partial_{x}u,\partial_{y}u$ and using the estimate of $\eta,$ we find that
$c_{k},d_{k}$ equal zero, is equivariant to that $p_{k},q_{k}$ satisfy the
system
\[
\left\{
\begin{array}
[c]{c}%
\sum\limits_{j\neq\alpha}\frac{1}{p_{\alpha}-p_{j}}-\sum\limits_{j}\frac
{1}{p_{\alpha}-q_{j}}=1+O\left(  \varepsilon^{\alpha}\right)  ,\text{ for
}\alpha=1,...,N,\\
\sum\limits_{j\neq\alpha}\frac{1}{q_{\alpha}-q_{j}}-\sum\limits_{j}\frac
{1}{q_{\alpha}-p_{j}}=O\left(  \varepsilon^{\alpha}\right)  ,\text{ for
}\alpha=1,...,N,
\end{array}
\right.
\]
for some $\alpha>0.$ Now using the nondegeneracy of the roots of the
Adler-Moser polynomial and the Lipschitz dependence of the $O\left(
\varepsilon^{\alpha}\right)  $ term on $p_{k},$ we can solve this system using
contraction mapping principle and get a solution $p_{k},q_{k},$ close to roots
of the Adler-Moser polymonial $a,b.$

\bigskip

\bigskip
\end{document}